\theoremstyle{plain}
\newtheorem{theorem}{Theorem}
\newtheorem{corollary}[theorem]{Corollary}
\newtheorem{lemma}[theorem]{Lemma}
\newtheorem{proposition}[theorem]{Proposition}
\newtheorem{conjecture}[theorem]{Conjecture} 
\newtheorem{remark}[theorem]{Remark} 
\theoremstyle{definition}
\newtheorem{definition}[theorem]{Definition}
\definecolor{darkgreen}{rgb}{0,0.5,0}
\definecolor{darkblue}{rgb}{0,0.3,0.7}
\newcommand\doi[2]        {\href{http://dx.doi.org/#1}{#2}}
\newcommand{\sfw}{\mathsf{w}}
\newcommand{\Ap}{\mathcal{A}_p}
\newcommand{\Bp}{\mathcal{B}_p}
\newcommand{\Mp}{\mathcal{M}(p)}
\newcommand{\Wp}{\mathcal{W}(p)}
\newcommand{\ind}{\mathscr{F}}
\newcommand{\C}{\mathcal{C}}
\newcommand{\ZZ}{\mathbb{Z}}
\newcommand{\QQ}{\mathbb{Q}}
\newcommand{\CC}{\mathbb{C}}
\newcommand{\RR}{\mathbb{R}}
\newcommand{\CCdot}{\ddot{\mathbb{C}}}
\newcommand{\Heis}{\mathsf{H}}
\newcommand{\Hlie}{\widehat{\mathfrak{h}}}
\newcommand{\sS}{\mathsf{S}}
\newcommand{\mdot}{\cdot}
\newcommand{\voa}{vertex operator algebra}
\newcommand{\voas}{vertex operator algebras}
\newcommand{\Ck}{\mathbb{C}_{\ell p}^H}
\newcommand{\HR}{\mathcal{H}^{\oplus}_{i\mathbb{R}}}
\newcommand{\U}[1]{U_q(\mathfrak{sl}_{#1})}
\newcommand{\UH}[1]{U_q^H(\mathfrak{sl}_{#1})}
\newcommand{\UHbar}[1]{\overline{U}_q^H(\mathfrak{sl}_{#1})}
\newcommand{\FockH}[1]{\mathrm{F}_{#1}}
\newcommand{\FockC}[1]{\mathsf{F}_{#1}}
\newcommand{\FockS}[1]{F_{#1}}
\newcommand{\FC}[2]{\mathrm{F}_{#1}\boxtimes \CC_{#2}^H}
\newcommand{\FV}[2]{\mathrm{F}_{#1} \boxtimes V_{#2}}
\newcommand{\SC}[2]{S_{#1} \otimes \mathbb{C}^H_{#2}}
\newcommand{\PC}[2]{P_{#1} \otimes \mathbb{C}^H_{#2}}
\newcommand{\FSC}[3]{\mathrm{F}_{#1} \boxtimes (S_{#2} \otimes \mathbb{C}_{#3}^H)}
\newcommand{\FPC}[3]{\mathrm{F}_{#1} \boxtimes (P_{#2} \otimes \mathbb{C}_{#3}^H)}
\newcommand{\ch}[2]{\mathrm{ch}[#1](#2)}
\newcommand{\sch}[2]{\mathrm{sch}[#1](#2)}
\newcommand{\sln}[1]{\mathfrak{sl}_{#1}}
\newcommand{\module}[3]{\mathsf{M}_{#1, #2, #3}}
\newcommand{\hopf}[6]{\sS^{\hopflink}_{(#1,#2,#3),(#4,#5,#6)}}
\newcommand{\sltwo}{\mathfrak{sl}_2}
\newcommand{\simTypiBpNAME}[2]{E^V_{#1,\,#2}} 
\newcommand{\simAtypiBpNAME}[3]{E^S_{#1,\,#2,\,#3}} 
\newcommand{\projVBpNAME}[2]{Q^\text{\em{V}}_{#1,\,#2}} 
\newcommand{\projPBpNAME}[3]{Q^\text{\em{P}}_{#1,\,#2,\,#3}} 
\newcommand{\hopflink}{{\,\text{\textmarried}}}
\DeclareMathOperator{\Id}{Id} 
\DeclareMathOperator{\Hom}{Hom} 
\DeclareMathOperator{\qdim}{qdim} 
\DeclareMathOperator{\rep}{Rep}
\DeclareMathOperator{\Rep}{Rep}
\DeclareMathOperator{\tr}{tr}
\numberwithin{equation}{section}
\title{Braided Tensor Categories related to $\Bp$ Vertex Algebras}
\author{Jean Auger, Thomas Creutzig, Shashank Kanade, Matthew Rupert}
\date{}
\begin{document}
\maketitle

\begin{abstract}
The $\Bp$-algebras are a family of \voas{} parameterized by $p\in \mathbb Z_{\geq 2}$. They are important examples of logarithmic CFTs and appear  as chiral algebras of type $(A_1, A_{2p-3})$ Argyres-Douglas theories. The first member of this series, the $\mathcal B_2$-algebra, are the well-known symplectic bosons also often called the $\beta\gamma$ \voa.

We study categories related to the $\Bp$ \voas{} using their conjectural relation to unrolled restricted quantum groups of $\sltwo$.
These categories are braided, rigid and non semi-simple tensor categories. We list their simple and projective objects, their tensor products and their Hopf links. The latter are succesfully compared to modular data of characters thus confirming a proposed Verlinde formula of David Ridout and the second author. 
 \end{abstract}

\vspace{1cm}
\vspace{1cm}

\section{Introduction}

In the present work we study a family of braided tensor categories associated to the $\Bp$ vertex algebras. Our motivation is fourfold: These \voas{} serve as prototypical examples for logarithmic two-dimensional conformal field theories and appear in the context of four-dimensional super conformal field theories. Moreover, they nicely illustrate the power of the theory of \voa{} extensions and they allow to test our ideas of Verlinde's formulae for logarithmic conformal field theories. 

Let us start by describing these four aspects and the underlying principle of correspondences between representation categories of quantum groups and \voas. 

\subsection{Quantum groups and \voas}

There are numerous
 relations between representation categories of \voas{} associated to a simple Lie algebra $\mathfrak g$ and module categories of quantum groups of type $\mathfrak g$. The most famous instance of this is of course Kazhdan-Lusztig's correspondence \cite{KL1, KL2, KL3, KL4} stating a braided equivalence between representation categories of affine Lie algebras at generic level and quantum groups at the corresponding $q$-parameter. The relevant representation categories of affine Lie algebras have then been understood as a vertex tensor categories of the corresponding affine \voas{} \cite{Fi, Zha, H-applicability}; see \cite{H-survey} for a review of the state of the art. For us certain \voas{} that are realized as kernels of so-called screening charges are of interest. These screening charges are related to simple roots of a simple Lie algebra and the corresponding \voas{} are automatically extensions of principal W-algebras associated to the Lie algebra. See \cite{FT} for general constructions. Except for \cite{CM2} only the example of $\mathfrak g= \mathfrak{sl}_2$ has been studied. In that latter case the corresponding \voas{} are called triplet $\Wp$ and singlet algebras $\Mp$, parameterized by $p\in\mathbb Z_{\geq 2}$, with the singlet being a subalgebra of the triplet. 
Studying  representation categories of \voas{} is typically challenging (see however \cite{TW, AM2, AM3, FGST, FGST2} for the triplet and \cite{A, AM1, CM1, CMR} for the singlet) therefore, a conjectural correspondence to a representation categories of a quantum groups is welcome. 
Let $q=e^{\pi i /p}$ then the quasi Hopf algebra \cite{CGR} corresponding to $\Wp$ is very closely related to the restricted quantum group of $\sltwo$, $\overline{U}_q(\sltwo)$, while the one corresponding to the singlet $\Mp$ is the unrolled restricted quantum group $\UHbar{2}$ of \cite{CGP}. Our basic conjecture is an equivalence of braided tensor categories for the unrolled restricted quantum group and singlet \voa. This conjecture appeared first in \cite{CGP} and has been refined and tested in \cite{CM1, CMR, CGR, R}, see also \cite{L, FL} for higher rank studies. 

\begin{conjecture}\label{keyconj}\textup{\cite[Conjecture 5.8]{CGR}}

The category of weight modules of the unrolled restricted quantum group $\UHbar{2}$ at $q=e^{\pi i/p}$ is braided equivalent to the smallest category of modules of the singlet \voa{} $\Mp$ that  contains all simple modules and which is complete with respect to taking tensor products, finite sums and subquotients.

\end{conjecture}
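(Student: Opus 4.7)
The plan is to establish the braided equivalence by assembling it from comparisons of the two categories piece by piece, rather than attempting to write down a global functor at once. First I would classify the simple weight modules on both sides and match them: on the quantum group side these fall into two families, the typical simples $\simTypiBpNAME{}{}$ (weights avoiding certain integral lattice points) and the atypical simples $\simAtypiBpNAME{}{}{}$, while on the singlet side they are the irreducible Feigin--Fuchs modules together with their atypical reductions. The matching must be compatible with characters: compute the graded trace of $q^{L_0 - c/24}$ on each singlet module and compare with the quantum trace on the corresponding weight module, so that the candidate bijection is forced by weight/conformal-weight data.

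Second, I would promote the object-level matching to a matching of tensor structures. On the quantum group side, tensor products of weight modules are directly computable via the coproduct of $\UHbar{2}$, giving explicit fusion rules including all the non-semisimple indecomposables that arise as projective covers $\projVBpNAME{}{}$ and $\projPBpNAME{}{}{}$. On the VOA side, the corresponding fusion products must be constructed inside the Huang--Lepowsky--Zhang $P(z)$-tensor framework; since $\Mp$ is non-rational, the key subtlety is to verify that the smallest subcategory closed under tensor products, sums and subquotients actually admits HLZ tensor structure. The natural route is to use that $\Mp \subset \Wp$ is a vertex operator algebra extension and invoke the induction/restriction machinery of Creutzig--Kirillov--Ostrik--McRae type, lifting known tensor structure on the triplet side to the singlet side, combined with the Heisenberg coset realization of $\Mp$.

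Third, with tensor products matched on objects, I would compare braiding and rigidity. The Hopf link invariants computed in the body of this paper (for the $\Bp$ side, but analogous formulas hold for the singlet) should agree with Hopf links computed from the $R$-matrix of $\UHbar{2}$ evaluated on pairs of weight modules. Matching these open Hopf link values on a generating set of simples, together with agreement on dimensions of spaces of intertwiners, should be enough to pin down the braiding up to a twist, and compatibility of ribbon twists (read off from conformal weights versus the balancing element) closes the gap.

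The main obstacle is the middle step: showing that the conjectured singlet category is stable under HLZ tensor product and that this tensor product is associative and braided. Rigidity of atypical simples on the VOA side is especially delicate, since one must exhibit non-split self-extensions matching the quantum-group projectives $\projPBpNAME{}{}{}$ and verify that their fusion with typicals reproduces the expected decompositions. I would attack this by a two-step reduction: first establish the tensor category properties for a ``large'' ambient category of $C_1$-cofinite modules via the general results of Creutzig--Jiang--McRae--Orosz Hunziker--Yang, and only then identify the conjectured singlet category as the tensor subcategory generated by simples, at which point Conjecture~\ref{keyconj} follows from the combinatorial matchings performed in steps one through three.
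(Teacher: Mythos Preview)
The statement you are attempting to prove is labeled \emph{Conjecture} in the paper, not Theorem or Proposition, and the paper does not contain a proof of it. It is cited as \cite[Conjecture 5.8]{CGR} and used throughout as a standing assumption: the paper explicitly says ``Assuming the quantum group correspondence (Conjecture \ref{keyconj}) to be correct we indeed verify that normalized Hopf links coincide up to complex conjugation with normalized modular $S^\chi$-coefficients.'' So there is no paper proof against which to compare your attempt; you are proposing a research program for an open problem.

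As a research outline your plan is reasonable and in fact tracks the partial evidence the paper cites: the object-level bijection of simples is Proposition~\ref{correspondence}, and the Hopf link comparison of your third step is essentially what Sections~\ref{Sectiontypical} and~\ref{Sectionatypical} carry out (on the $\Bp$ side, via induction). However, you correctly identify the genuine obstruction in your ``main obstacle'' paragraph, and this is precisely why the statement remains a conjecture rather than a theorem: it is not known that the relevant category of $\Mp$-modules carries the Huang--Lepowsky--Zhang vertex tensor structure, and without that one cannot even speak of a braided tensor category on the singlet side to compare with $\UHbar{2}$-Mod. Your proposed reduction via $C_1$-cofiniteness results and the triplet extension is the natural line of attack, but it is not complete in the literature at the time of this paper. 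Matching characters, Grothendieck rings, and Hopf link scalars --- all of which is done or cited --- is strong evidence but does not by itself produce a braided tensor functor, let alone an equivalence; in particular, agreement of Hopf links on simples does not determine the associator or the braiding on indecomposables. So your proposal should be read as a plausible strategy, not a proof, and the paper treats it exactly that way.
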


\subsection{Logarithmic conformal field theory and Verlinde's formula}

A \voa{} is the symmetry/chiral algebra of a two-dimensional conformal field theory. A non semi-simple action of the Virasoro zero-mode results in possibly logarithmic singularities in correlation functions. This is the reason for calling conformal field theories based on \voas{} with non semi-simple representation categories logarithmic CFTs. We refer to \cite{CR1} for an introduciton. 

In contrast, a conformal field theory based on a \voa{} whose representation category is a finite, semi-simple, rigid, braided and factorizable tensor category is called rational. Such a category is usually called a modular tensor category and the corresponding \voa{} is called strongly rational. 
In this case, the physics axioms of conformal field theory say that Verlinde's formula should hold \cite{V, MS}. For this, let $M_0, M_1, \dots, M_n$ denote the inequivalent simple objects of a strongly rational \voa{}  $V=M_0$. Then torus one-point functions are the graded traces
\begin{align}
\text{ch}[M_i](v, \tau) = \text{tr}_{M_i}\left( o(v) q^{L_0- \frac{c}{24}}\right), \qquad q=e^{2\pi i \tau}
\end{align}
with $o(v)\in \text{End}(M_i)$ the zero-mode corresponding to $v\in V$ and $c$ the central charge of the \voa. 
They carry an action of the modular group \cite{Z} and especially the modular $\sS$-transformation defines the modular $\sS$-matrix,
\begin{align}
\text{ch}[M_i]\left(v, -\frac{1}{\tau}\right) =  \tau^k \sum_{j=0}^n  \sS^\chi_{i, j} \text{ch}[M_j](v, \tau)
\end{align}
with $k$ the modified degree of $v$ as introduced by \cite{Z}. We do not go into more details here but refer to the introduction of \cite{CG} for a more precise explanation. 
Let ${N_{ij}}^k$ be the fusion rules of the theory, i.e. 
\begin{align}
M_i \boxtimes_V M_j \cong \bigoplus_{k=0}^n {N_{ij}}^k \, M_k
\end{align}
then Verlinde's formula says that
\begin{equation} \label{ratVer}
{N_{ij}}^k = \sum_{\ell =0}^n \frac{\sS^\chi_{i,\ell}\sS^\chi_{j,\ell}(\sS^\chi)^{-1}_{\ell, k}}{\sS^\chi_{0,\ell}}.
\end{equation}
This formula follows directly from 
\begin{equation}\label{trueVer}
\frac{\sS^\hopflink_{i, j}}{\sS^\hopflink_{0, j}} =\frac{\sS^\chi_{i, j}}{\sS^\chi_{0, j}} 
\end{equation}
with the Hopf links $\sS^\hopflink_{i, j} = \text{tr}_{M_i \boxtimes_V M_j}\left( c_{j, i} \circ c_{i, j}\right)$ where the  $c_{i, j}: M_i \boxtimes_V M_j \xrightarrow{\cong} M_j \boxtimes_V M_i$ are the braiding isomorphisms, since in any finite rigid braided tensor category Hopf links satisfy
\begin{align}
\frac{\sS^\hopflink_{i, \ell}}{\sS^\hopflink_{0, \ell}} \frac{\sS^\hopflink_{j, \ell}}{\sS^\hopflink_{0, \ell}} = \sum_{k=0}^n {N_{ij}}^k\frac{\sS^\hopflink_{k, \ell}}{\sS^\hopflink_{0, \ell}} 
\end{align}
if $\ell$ labels a simple object. Then \eqref{ratVer} follows directly from \eqref{trueVer} and invertibility of the Hopf link $\sS^\hopflink$-matrix in modular tensor categories. This famous Theorem is due to Yi-Zhi Huang \cite{H1, H2, H3}.

It is natural to ask for a variant of Verlinde's formula for \voas{} with non semi-simple finite representation categories. The picture that is promoted in \cite{CG} is that there should still hold a relation of the type \eqref{trueVer} with slight modifications: since traces on negligible objects vanish one then needs to replace Hopf links by modified Hopf links, that is modified traces of double braidings; similarly one also needs to take into account so-called pseudo trace functions of modules \cite{Miy}. This picture is verified in examples that are based on conjectural correspondences to restricted quantum groups \cite{CG, CMR}, see also \cite{GR, FGR} for further work on the Verlinde formula in this context. 

In practice, most interesting \voas{} (such as the affine \voas{} at admissible level) have representation categories that are not even finite, they have uncountably many inequivalent simple objects. However, jointly with David Ridout, one of us was able to nonetheless conjecture a Verlinde's formula for affine \voas{} of $\mathfrak{sl}_2$ at admissible level by treating characters as formal distributions \cite{CR2, CR3}, see \cite{RW} for a review. This conjecture is completely open, except for some encouraging computations of fusion rules \cite{Ga, Ri, AP} and a recent proof of a formula of type \eqref{trueVer} for the finite subcategory of ordinary modules for all affine \voas{} of simply-laced Lie algebras at admissible level \cite{CHY, C1}.

The conjectural Verlinde formula for the $\Bp$-algebras is of the same type as admissible level affine $\mathfrak{sl}_2$ and actually $\mathcal B_3$ is nothing but the affine \voa{} of $\mathfrak{sl}_2$ at level $-4/3$. Assuming the quantum group correspondence (Conjecture \ref{keyconj}) to be correct we indeed verify that normalized Hopf links coincide up to complex conjugation with normalized modular $S^\chi$-coefficients. These will be the results of section \ref{Sectiontypical} and \ref{Sectionatypical}, see also Theorem \ref{introcomparison} of the introduction.

\subsection{Chiral algebras for Argyres-Douglas theories}

It turns out that \voas{} play a very important role in studying higher dimensional supersymmetric gauge theories \cite{BL+, CG2} and the \voas{} of the present work appear as chiral algebras of Argyres-Douglas theories of type $(A_1,  A_{2p-3})$ \cite{C}. Argyres-Douglas theories are four-dimensional $\mathcal N = 2$ supersymmetric field theories \cite{ADj, APSW, EHIY}. Schur indices of these theories are believed to coincide with characters of associated \voas{} \cite{BN1,XYY,BN2,CGS,CGS2,C,C3} while indices of line and surface defects are related to characters of modules of the \voas{} \cite{CGS,CGS2}. Finally, modularity and Verlinde's formula also have some nice interpretation in the gauge theory \cite{FPYY,NY,KSY}.
 For some further recent progress see \cite{BLN,SXY,S,BLN2,IS,ASS,FS,Gia,CN,BN3,BL,XY1,XY2}.
 
The $\Bp$-algebra can thus be seen as a prototypical example of \voas{} appearing in such theories and so let us describe their modules in a few words before giving all details in the later part of this work. There are a few types of modules. Firstly ordinary modules: these are modules that have finite-dimensional top level subspace and conformal weight spaces are all finite-dimensional as well. Their characters converge on the upper-half plane and are actually modular. The second type are highest weight modules with infinite-dimensional top level subspace. Their character converges in a certain domain if we consider characters graded by conformal weight and Heisenberg weight (fugacity in physics). Analytic continuation of these characters then form a vector-valued meromorphic Jacobi form of negative index. Finally, there are modules that are not even of highest-weight type. They are called relaxed-highest-weight modules and their characters only converge in a distributional sense. Furthermore there are also modules that are twisted by spectral flow and then there are projective covers of simple modules. Concerning modularity, the physics interest has mainly been in the study of the modules whose characters analytically continue to meromorphic Jacobi forms. They then also have considered a naive Verlinde formula and compared them to fusions of line defects in the gauge theory. See e.g. \cite{NY} for nice discussions. However, negative fusion coefficients appeared! Let us quickly explain how to avoid them. It turns out that the map of modules to analytic continuations of characters is infinity to one. The reason is that there are infinitely many modules whose characters converge in certain different domains but their analytic continuations coincide up to possible signs. As we will explain later, see Theorem \ref{thm:Verintro}, there is the notion of semisimplification of a category and modular properties of these meromorphically continued characters capture fusion of a Grothendieck ring associated to the semisimplification of the category. For this one has to wisely choose modules representing this Grothendieck ring and if we follow the rule that with a module $M$ also its dual $M^*$ has to be a representative of this Grothendieck ring then indeed we get a working Verinde's formula with non-negative integer fusion rules. 
For details see section \ref{Sectionatypical}.

There are other representation categorical questions that are rather important in gauge theories. For example Ext-groups of \voas{} including the $\Bp$-algebras seem to correspond to the algebra of functions on the Coulomb branch of the theory \cite{CG3, CCG}.

\subsection{Vertex algebra extensions}

The present work is a continuation of our previous studies. The starting point is the insight that one can study \voa{} extension in a purely categorical language. Firstly, let $V$ be a \voa{} with a vertex tensor category $\C$ of $V$-modules, i.e. $\C$ is especially a braided tensor category with a system of balancing isomorphisms $\theta$. Then an object $A$ of $\C$ containing $V$ with multiplicity one extends $V$ to a larger \voa{} if and only if $A$ can be given the structure of a commutative algebra object in $\C$ with a trivial $\theta$ \cite{HKL}. This result can be very easily extended to the case of
$\frac{1}{2}\ZZ$-graded vertex operator super-algebras, \cite{CKL}.
Moreover there is a notion of local modules for a commutative algebra object and this category of local objects is equivalent as a braided tensor category to the category of modules of the \voa{} $A$ that lie in $\C$ and furthermore there is a induction functor from the category of those $V$-modules that centralize $A$ to local $A$-modules  \cite{CKM}. In fortunate situations all local modules of interest can be realized via induction and representation categories associated to quite a few logarithmic theories have already successfully been studied in this way, see \cite{ACR, CFK, CKLR2, CLRW}. In addition the Hopf links that play such an important role in Verlinde's formula commute with the induction functor. With this knowledge we can study the $\Bp$ vertex algebras. Firstly, by construction they are infinite order simple current extensions of a singlet \voa{} times a Heisenberg \voa{} \cite{CRW}. A simple current is an invertible object in the tensor category and these give rise to the nicest (or simplest) \voa{} extensions. The theory of simple current extensions in non semi-simple tensor categories is developed in \cite{CKL} and the underlying category is actually a completion as one has to allow for infinite direct sums, see \cite{AR, CGR} for details. The singlet \voa{} is the $U(1)$-orbifold of the probably best-known $C_2$-cofinite yet non-rational \voa{}, the triplet algebra \cite{AM2, FGST,FGST2, TW}. The category of Fock modules of the Heisenberg \voa{} is nothing but $\CC$-graded vector spaces and thus rather trivial so that the interesting structure of $\Bp$-algebra modules will come from objects of the singlet algebra. The representation theory of the latter is somehow known and we refer to section 5 of \cite{CGR} to a summary. The most important point for us is that there is a conjectural equivalence to weight modules of a unrolled restricted quantum group of $\mathfrak{sl}(2)$, see Conjecture \ref{keyconj} above. This conjecture has been tested in various ways \cite{CM1, CMR, CGR, R} and so the braided tensor category that we study is the one of the algebra object corresponding to the $\Bp$-algebra under the conjectural equivalence.

\subsection{Results}

In this section, we summarise the results of the paper. For the detailed notation we refer to the main body of the work.

Let $p$ be a positive integer at least $2$ and let $q=e^{\pi i/p}$.
Define the category $\C:=\HR \boxtimes \UHbar{2}\text{-Mod}$ to be the Deligne product of a category $\HR$ of $i\mathbb{R}$-graded complex vector spaces (see subsection \ref{Heisenberg} for details) and the weight category of the quantum group $\UHbar{2}$. $\C$ contains simple currents $\FC{\lambda_p}{p}$ where $\lambda_p^2:=-p/2$ and it is shown in Section \ref{SectionBp} that the corresponding object 
\begin{align}
\Ap:= \bigoplus\limits_{k \in \mathbb{Z}} (\FC{k\lambda_p}{kp})
\end{align}
in the extended category $\C_{\oplus}$ is a commutative algebra object with unique (up to isomorphism) algebraic structure (see Proposition \ref{Apalgebra}). Hence, one can define the corresponding representation categories of modules and local modules for $\Ap$, $\Rep(\Ap)$ and $\Rep^{0}(\Ap)$, respectively (see \ref{algebracurrents}).
Importantly, there is an induction functor $\ind :\C \rightarrow \Rep(\Ap)$ and since $\Ap$ is a simple current extension we can determine all simple local modules using induction. 
 The categorical structure of $\Rep^{0}(\Ap)$ is determined in Proposition \ref{RepApstructure} and is completely inherited from $\C$. The following theorem lists simple and projective objects and for notation and a description of $\C$ we refer to the background section. 
\begin{theorem} The simple modules in $\Rep^{0}(\Ap)$ are (with $\lambda_p^2:=-p/2$ and  $\ddot{\CC}:=(\CC-\ZZ) \cup p\ZZ$)
\begin{align}
 \simTypiBpNAME{\gamma}{\alpha} &= \ind\left(\FV{\gamma}{\alpha}\right)& \text{with } \alpha &\in \CCdot \text{ and } \gamma \lambda_p + \frac{\alpha+p-1}{2} \in \ZZ \; , \\
 \simAtypiBpNAME{\gamma}{i}{\ell} &= \ind\left(\FSC{\gamma}{i}{\ell p}\right)&  \text{with } i &\in \{0,\dots,p-2\},\ell \in \mathbb{Z} \text{ and } \gamma \lambda_p + \frac{i+p\ell}{2} \in \ZZ \; .
\end{align}
We have families of indecomposable modules:
\begin{align}
\projVBpNAME{\gamma}{\alpha} &= \ind\left(\FV{\gamma}{\alpha}\right) &\text{with } \alpha &\notin \CCdot \text{ and } \gamma \lambda_p + \frac{\alpha+p-1}{2} \in \ZZ \; , \\
\projPBpNAME{\gamma}{i}{\ell} &= \ind\left(\FPC{\gamma}{i}{\ell p}\right) \ &\text{with } i &\in \{0,\dots,p-2\}, \ell \in \mathbb{Z} \text{ and } \; \gamma \lambda_p + \frac{i + \ell p}{2} \in \ZZ,
\end{align}
with $\projPBpNAME{\gamma}{i}{\ell}$ being projective, and the above modules satisfy 
\begin{align}
E^V_{\gamma,\alpha} \cong E^V_{\gamma+k\lambda_p,\alpha+pk},&\quad\quad E^S_{\gamma,i,\ell} \cong E^S_{\gamma+k\lambda_p,i,\ell+k},\\
Q^V_{\gamma,\alpha} \cong Q^V_{\gamma+k\lambda_p,\alpha+pk},&\quad\quad Q^P_{\gamma,i,\ell}  \cong Q^P_{\gamma+k\lambda_p,i,\ell+k},
\end{align}
for all $k \in \mathbb{Z}$. 
Given $\alpha \not \in \ddot{\mathbb{C}}$, we can write $\alpha=i+\ell p$ for some $i=1,...,p-1$ and $\ell \in \mathbb{Z}$. The indecomposable modules admit the following Loewy diagrams:
\begin{center}
\begin{tikzpicture}[baseline=(current bounding box.center)]
\node (top) at (1,1) [] {$\simAtypiBpNAME{\gamma}{(p-1)-i}{\ell}$};
\node (bottom) at (1,-1) [] {$\simAtypiBpNAME{\gamma}{i-1}{\ell+1}$};
\draw[->, thick] (top) -- (bottom);
\node (label) at (-0.75,0) [circle, inner sep=2pt, color=black, fill=brown!25!] {$\projVBpNAME{\gamma}{i+\ell p}$};
\end{tikzpicture}
\hspace{3cm}
\begin{tikzpicture}[baseline=(current bounding box.center)]
\node (top) at (1,1) [] {$\simTypiBpNAME{\gamma}{i+1-p+\ell p}$};
\node (bottom) at (1,-1) [] {$\simTypiBpNAME{\gamma}{p-1-i+\ell p}$};
\draw[->, thick] (top) -- (bottom);
\node (label) at (-0.5,0) [circle, inner sep=2pt, color=black, fill=brown!25!] {$\projPBpNAME{\gamma}{p-1}{\ell}$};
\end{tikzpicture}
\\
\begin{tikzpicture}[baseline=(current bounding box.center)]
\node (tag) at (-6,0) [] {\em{For $i = 1, \dots , p-2 \,$:}};
\node (top) at (0,1.5) [] {$\simAtypiBpNAME{\gamma}{i}{\ell}$};
\node (middleI) at (-2.25,0) [] {$\simAtypiBpNAME{\gamma}{(p-2)-i}{\ell-1}$};
\node (middleII) at (2.25,0) [] {$\simAtypiBpNAME{\gamma}{(p-2)-i}{\ell+1}$};
\node (bottom) at (0,-1.5) [] {$\simAtypiBpNAME{\gamma}{i}{\ell}$};
\draw[->, thick] (top) -- (middleI);
\draw[->, thick] (top) -- (middleII);
\draw[->,thick] (middleI) -- (bottom); 
\draw[->, thick] (middleII) -- (bottom);
\node (label) at (0,0) [circle, inner sep=2pt, color=black, fill=brown!25!] {$\projPBpNAME{\gamma}{i}{\ell}$};
\end{tikzpicture} $\; .$
\hspace{3cm} $\text{}$
\end{center}

The category $\Rep^{0}(\Ap)$ is a rigid monoidal category with tensor product $\ind(U) \otimes \ind(V) \cong \ind(U \otimes V).$ $\Rep^{0}(\Ap)$ is also braided with braiding $c^{\Ap}_{\ind(U),\ind(V)}$ defined by
\begin{align}
c_{\ind(U),\ind(V)}^{\Ap}= \Id_{\Ap} \otimes c_{U,V},\\
\end{align}
where $c_{U,V}$ is the braiding on $\mathcal{C}_{\oplus}$ given by the product of the braidings on $\mathcal{H}$-Mod and $\UHbar{2}$-Mod,
where we have implicitly assumed the isomorphism $\ind(U)\otimes\ind(V)\cong\ind(U\otimes V)$.
 If $p$ is odd, then $\Rep^{0}(\Ap)$ has twist $\theta_{\Ap}$ and Hopf links $S_{\ind(U),\ind(V)}^{\hopflink}$ given by
\begin{align}
\theta_{\ind(V)}&=\Id_{\Ap} \otimes \theta_V\\
\sS_{\ind(U),\ind(V)}^{\hopflink}&= \sS_{U,V}^{\hopflink} 
\end{align}
where $\theta_V$ and $S_{U,V}^{\hopflink}$ are the twist and Hopf links respectively on $\mathcal{C}_{\oplus}$, and we are viewing the Hopf links as the scalars by which they act.\\
\end{theorem}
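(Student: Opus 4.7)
The plan is to apply the general theory of local modules for a commutative algebra object in a braided tensor category developed in \cite{CKM}, specialised to the simple current setting of \cite{CKL, AR, CGR}. The key input is that $\Ap$ is a direct sum of invertible objects $\FC{k\lambda_p}{kp}$, so tensoring with $\Ap$ is exact, $\ind$ preserves simples and indecomposables up to identifying objects in the same $\ZZ$-orbit under $\FC{\lambda_p}{p}\otimes -$, and an object $X\in\C_\oplus$ induces into $\Rep^0(\Ap)$ precisely when its monodromy with $\FC{\lambda_p}{p}$ is trivial. The orbit identifications $E^V_{\gamma,\alpha}\cong E^V_{\gamma+k\lambda_p,\alpha+pk}$ and the analogues for $E^S$, $Q^V$, $Q^P$ are then immediate from the effect of tensoring Heisenberg Fock modules with $\FockH{\lambda_p}$ and weight $\UHbar{2}$-modules with $\CC^H_p$.

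Next, I would translate the monodromy condition into the stated congruences. On the Heisenberg factor the double braiding of $\FockH{\gamma}$ with $\FockH{\lambda_p}$ is the scalar $e^{2\pi i \gamma \lambda_p}$, while on the $\UHbar{2}$ factor the double braiding of a weight module with $\CC^H_p$ is a scalar determined by its $H$-weight via $q = e^{\pi i/p}$. Requiring the product of these phases to be trivial produces precisely
\begin{align*}
\gamma \lambda_p + \tfrac{\alpha + p - 1}{2} \in \ZZ \quad \text{and} \quad \gamma\lambda_p + \tfrac{i + \ell p}{2} \in \ZZ .
\end{align*}
The dichotomy between simple induced modules ($E^V$, $E^S$) and merely indecomposable induced modules ($Q^V$, $Q^P$) is then inherited from the analogous dichotomy in $\UHbar{2}\text{-Mod}$: the Verma $V_\alpha$ is simple exactly when $\alpha\in\ddot{\CC}$, and otherwise has the known length-two filtration; the $P_i$ with $i\in\{0,\dots,p-2\}$ are the non-simple indecomposable projectives with their known Loewy series.

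The Loewy diagrams follow by applying the exact functor $\ind$ to the short exact sequences and Loewy filtrations that already exist in $\UHbar{2}\text{-Mod}$. Induction sends a simple satisfying the locality condition to a simple local module, and the composition factors of the $Q$-modules are therefore obtained from those of the corresponding Verma and projective modules simply by relabelling: the simple current shifts the $H$-weight by $p$ and the $\gamma$-weight by $\lambda_p$, which explains the indices of the simple sub- and quotient-modules appearing in each picture. The only point to verify is that every composition factor satisfies the locality condition, but this is automatic since adjacent factors differ by a single tensoring with $\FC{\lambda_p}{p}$ and hence remain local.

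Finally, for the monoidal, braided and ribbon structure I would invoke \cite{CKM} directly: $\ind$ is a braided tensor functor, giving $\ind(U)\otimes \ind(V)\cong \ind(U\otimes V)$ and the stated formula $c^{\Ap}_{\ind(U),\ind(V)} = \Id_{\Ap}\otimes c_{U,V}$; rigidity of $\Rep^0(\Ap)$ then follows from rigidity of $\C_\oplus$ together with the fact that duals of local modules are local. The main technical obstacle is the twist: $\Rep^0(\Ap)$ inherits a ribbon structure only when the balancing isomorphism of $\Ap$ is trivial, i.e.\ $\theta_{\FC{k\lambda_p}{kp}} = 1$ for all $k$. Computing this twist gives a phase whose Heisenberg and quantum group contributions combine to $e^{\pi i k^2 p}\cdot (\text{compensating phase})$, and the total is trivial precisely when $p$ is odd; this parity constraint is the crux of the matter and is the familiar obstruction for simple current extensions in the non-semisimple setting. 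Once the twist exists, the Hopf link formula $\sS^\hopflink_{\ind(U),\ind(V)}=\sS^\hopflink_{U,V}$ is a direct consequence of the fact that modified traces and double braidings commute with induction on local modules.
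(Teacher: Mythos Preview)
Your proposal is correct and follows essentially the same route as the paper: invoke the \cite{CKM,CKL} simple current machinery, compute the monodromy with $\FC{\lambda_p}{p}$ to obtain the locality congruences (this is exactly the paper's Theorem~\ref{liftingcondition}), use Frobenius reciprocity to show that projectives induce to projectives and that every simple in $\Rep^0(\Ap)$ is an induced simple (the paper's Lemmas~\ref{projectivesinduced} and~\ref{simplesinduced}), and transfer the Loewy data via exactness of $\ind$ applied to the short exact sequences of Lemma~\ref{prop:SESresolutionofAtypicals}; the twist computation for odd $p$ is also identical. One minor correction: your justification that ``adjacent composition factors differ by a single tensoring with $\FC{\lambda_p}{p}$'' is not true (the $S_j$-index changes, not only the $\ell$-shift), but no separate verification is needed anyway since $\Rep^0(\Ap)$ is closed under subquotients in $\Rep(\Ap)$, so locality of the composition factors is automatic once the middle term is local.
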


\subsubsection{The examples of the $\beta\gamma$ \voa{} and $\mathbb L_{-4/3}(\mathfrak{sl}_2)$}

The $\Bp$-algebra for $p=2$ and $p=3$ are the 
$\beta\gamma$ \voa{} and the affine \voa{} $\mathbb L_{-4/3}(\mathfrak{sl}_2)$ of $\mathfrak{sl}_2$ at level $-4/3$, respectively. These \voas{} have been discussed at length in \cite{A2, CR1, CR2, CR4, CRW, Ri, Ri2, Ri3, RW2}. We use the notation of \cite{CRW}. Since these two are our first two examples and since they are presently the most important ones in applications we now list their representation-theoretic data more explicitly. We especially list tensor product decompositions and braidings. 

\paragraph{The $\beta\gamma$ \voa}
${}$

The $\beta\gamma$ \voa{} has modules $\mathbb E_\lambda^s, \mathbb L_0^s$ with $s\in \mathbb Z$ and $\lambda \in \mathbb R/\mathbb Z$. The $\beta\gamma$ \voa{} itself is $\mathbb L_0^0$ and this is the only highest-weight module. The $\mathbb E_\lambda^0$ are the relaxed-highest weight modules, i.e., their conformal weight is bounded below but the top level is infinite-dimensional. The supersript $s$ denotes the twists of these modules by an automorphism called the spectral flow. Comparing with our notation we first set $\lambda_2 = i = \sqrt{-1}$ and then under the conjectural quantum group to singlet \voa{} equivalence one has 
\begin{align}
\mathbb E_\lambda^s  \ \leftrightarrow \ E^V_{\frac{2s-1-\lambda}{2\sqrt{-1}},\lambda}   \qquad \text{and} \qquad  \mathbb L_0^s  \ \leftrightarrow \ E^S_{\frac{s}{\sqrt{-1}}, 0, 0}. 
\end{align}
We now list the data of $\Rep^{0}(\mathcal{A}_2)$ explicitly including  tensor product decompositions and braidings. 
We have the set $\ddot{\mathbb{C}}=(\mathbb{C} \setminus \mathbb{Z}) \cup 2\mathbb{Z}$. Hence, the simple modules in $\Rep^{0}(\mathcal{A}_2)$ are
\begin{align*}
 \simTypiBpNAME{\gamma}{\alpha} &= \ind\left(\FV{\gamma}{\alpha}\right)& \text{with } \alpha \in &\CCdot,\text{ and }  \frac{\alpha+1}{2}+i\gamma \in \ZZ \; , \\
E_{\gamma,0,\ell}^S&=\ind \left( \mathrm{F}_{\gamma} \boxtimes \mathbb{C}_{2\ell }^H \right)&   \text{with } i\gamma,\ell &\in \mathbb{Z} \; .
\end{align*}
We have families of indecomposable modules:
\begin{align*}
\projVBpNAME{\gamma}{\alpha} &= \ind\left(\FV{\gamma}{\alpha}\right) &\text{with } \alpha \in &\mathbb{Z}\setminus2\mathbb{Z} \text{ and } \frac{\alpha+1}{2}+i\gamma \in \ZZ \; , \\
Q_{\gamma,0,\ell}^P &= \ind\left(\FPC{\gamma}{0}{2\ell }\right) \ &\text{with } i\gamma,\ell &\in \ZZ,
\end{align*}
where $Q_{\gamma,0,\ell}^P$ is projective, and the above modules satisfy $E^V_{\gamma,\alpha} \cong E^V_{\gamma+ik,\alpha+2k},$ $E^S_{\gamma,\ell} \cong E^S_{\gamma+ik,0,\ell+k},$ $Q^V_{\gamma,\alpha} \cong Q^V_{\gamma+ik,\alpha+2k}$, and $Q^P_{\gamma,0,\ell}  \cong Q^P_{\gamma+ik,0,\ell+k}$ for all $k \in \mathbb{Z}$. The indecomposable modules satisfy the short exact sequences:
\begin{align*}
0 \to E_{\gamma,0,\ell}^S \to Q^V_{\gamma,1+2\ell} \to E_{\gamma,0,\ell+1}^S \to 0\qquad\text{and}\qquad
0 \to E_{\gamma,1+2\ell}^V \to Q_{\gamma,0,\ell}^P \to E_{\gamma,1+2(\ell-1)}^V \to 0.
\end{align*}
The tensor decompositions for $\mathcal{C}$ can be found in \cite[Section 8]{CGP} and it then follows that 
\begin{align*}
E_{\gamma_1,0,\ell_1}^S \otimes E_{\gamma_2,0,\ell_2}^S \cong E_{\gamma_1+\gamma_2,0,\ell_1+\ell_2}^S\qquad\text{and}\qquad
E_{\gamma_1,0,\ell}^S \otimes E_{\gamma_2,\alpha}^V \cong E_{\gamma_1+\gamma_2,\alpha+2\ell}^V
\end{align*}
with $\alpha \in \ddot{\mathbb{C}}$. For $\alpha,\beta \in \ddot{\mathbb{C}}$ with $\alpha+\beta \not \in \mathbb{Z}$, we have
\[E_{\gamma_1,\alpha}^V \otimes E_{\gamma_2,\beta}^V  \cong  E_{\gamma_1+\gamma_2,\alpha+\beta-1}^V \oplus E_{\gamma_1+\gamma_2, \alpha+\beta+1}^V.\]
When $\alpha+\beta=n \in \mathbb{Z}$, set $n=j+2k$ with $j=0,1$ and $k\in \mathbb{Z}$. The tensor decompositions for these case do not appear in \cite{CGP}, but are easily computed from characters for $p=2,3$. If $\alpha \in \ddot {\mathbb{C}}$, $n=2\ell$, we have
\[ E_{\gamma_1,\alpha}^V \otimes E_{\gamma_2,\beta}^V \cong Q_{\gamma_1+\gamma_2,0,\ell}^P\]
and if $n=1+2\ell$, we have
\[ E_{\gamma_1,\alpha}^V \otimes E_{\gamma_2,\beta}^V \cong E_{\gamma_1+\gamma_2,2\ell}^V \oplus E_{\gamma_1+\gamma_2,2(\ell+1)}^V \]

Let $c_{X,Y}$ denote the braiding. We have 
\[ c_{\mathrm{F}_{\gamma_1} \boxtimes X,\mathrm{F}_{\gamma_2} \boxtimes Y}=c_{\mathrm{F}_{\gamma_1},\mathrm{F}_{\gamma_2}} \boxtimes c_{X,Y}=e^{\pi i \gamma_1 \gamma_2}\mathrm{Id} \boxtimes c_{X,Y}.\]
The braiding restricted to a simple summand is a scalar which
 can be computed by acting with the braiding on a highest (or lowest) weight vector in the summand and we get
\begin{align*}
c_{E_{\gamma_1,0,\ell_1}^S,E_{\gamma_2,0,\ell_2}^S}=q^{2(\ell_1\ell_2+\gamma_1\gamma_2)}\mathrm{Id}_{E_{\gamma_1+\gamma_2,0,\ell_1+\ell_2}^S}\qquad\text{and}\qquad
c_{E_{\gamma_1,0,\ell}^S,E_{\gamma_2,\alpha}^V}=q^{\ell(\alpha+1)+2\gamma_1\gamma_2}\mathrm{Id}_{E_{\gamma_1+\gamma_2,\alpha+2\ell}^V}.
\end{align*}
For $\alpha, \beta \in \ddot{\mathbb{C}}$ with $\alpha+\beta \not \in \mathbb{Z}$, we have
\[
c_{E_{\gamma_1,\alpha}^V,E_{\gamma_2,\beta}^V} =q^{\frac{1}{2}(\alpha+1)(\beta+1)+2\gamma_1\gamma_2}\mathrm{Id}_{E_{\gamma_1+\gamma_2,\alpha+\beta+1}^V} \oplus q^{\frac{1}{2}(\alpha-1)(\beta-1)+2\gamma_1\gamma_2}\mathrm{Id}_{E_{\gamma_1+\gamma_2,\alpha+\beta-1}^V}.\]
If $\alpha+\beta=2\ell$, we have
\[ c_{E_{\gamma_1,\alpha}^V \otimes E_{\gamma_2,\beta}^V}=q^{\frac{1}{2}(\alpha+1)(\beta+1)+2\gamma_1\gamma_2}\mathrm{Id}_{Q_{\gamma_1+\gamma_2,0,\ell}^P}  \oplus n_{Q_{\gamma_1+\gamma_2,0,\ell}^P} \]
for some nilpotent endomorphism $n_{Q_{\gamma_1+\gamma_2,0,\ell}^P}$ on $Q_{\gamma_1+\gamma_2,0,\ell}^P$.
If $\alpha+\beta=1+2\ell$, we have
\[ c_{E_{\gamma_1,\alpha}^V,E_{\gamma_2,\beta}^V}=q^{\frac{1}{2}(\alpha+1)(\beta+1)+2\gamma_1\gamma_2}\mathrm{Id}_{E_{\gamma_1+\gamma_2,2(\ell+1)}^V} \oplus q^{\frac{1}{2}(\alpha-1)(\beta-1)+2\gamma_1\gamma_2}\mathrm{Id}_{E_{\gamma_1+\gamma_2,2\ell}^V}\]

\paragraph{The affine \voa{} $\mathbb L_{-4/3}(\mathfrak{sl}_2)$ of $\mathfrak{sl}_2$ at level $-4/3$} ${}$

$\mathbb L_{-4/3}(\mathfrak{sl}_2)$ has  modules $\mathbb E_\lambda^s, \mathbb L_0^s, , \mathbb L_{-2/3}^s$ with $s\in \mathbb Z$ and $\lambda \in \mathbb R/2\mathbb Z$. As before $s$ indicates the spectral flow twists of modules. The $\mathbb E_\lambda^0$ are the relaxed-highest weight modules and the affine \voa{} itself is $\mathbb L_0^0$. There are two more highest and lowest weight modules, namely $\mathbb L_{-2/3}^0$  and $\mathbb L_0^1$ are of highest-weight $-2/3$ and $-4/3$ while $\mathbb L_{-2/3}^{-1}$  and $\mathbb L_0^{-1}$ are of lowest-weight $2/3$ and $4/3$ (here the highest and lowest weights refer to the $\mathfrak{sl}_2$-weights.). The identification with our modules is with $\lambda_3 = \sqrt{-3/2}$ given by
\begin{align}
\mathbb E_\lambda^s  \ \leftrightarrow \ E^V_{\frac{2s-\lambda}{2\sqrt{-3/2}},\lambda}  \qquad \text{and} \qquad  \mathbb L_0^s  \ \leftrightarrow \ E^S_{\frac{s}{\sqrt{-3/2}}, 0, 0} \qquad \text{and} \qquad  \mathbb L_{-2/3}^s  \ \leftrightarrow \ E^S_{\frac{2s+1}{\sqrt{-3/2}}, 1, 0}. 
\end{align}

We have $\ddot{\mathbb{C}}=(\mathbb{C}\setminus \mathbb{Z}) \cup 3\mathbb{Z}$. The simple modules in $\Rep^{0}(\mathcal{A}_3)$ are
\begin{align*}
 \simTypiBpNAME{\gamma}{\alpha} &= \ind\left(\FV{\gamma}{\alpha}\right)& \text{with } \alpha &\in \CCdot \text{ and }  \alpha+2+\sqrt{6}i\gamma \in 2\ZZ \; , \\
 \simAtypiBpNAME{\gamma}{j}{\ell} &= \ind\left(\FSC{\gamma}{j}{\ell p}\right)&  \text{with } j &\in \{0,1\}, \ell \in \mathbb{Z} \text{ and } j+3\ell+\sqrt{6}i\gamma  \in 2\ZZ \; .
\end{align*}
We have families of indecomposable modules:
\begin{align*}
\projVBpNAME{\gamma}{\alpha} &= \ind\left(\FV{\gamma}{\alpha}\right) &\text{with } \alpha &\in \mathbb{Z} \setminus 3\mathbb{Z} \text{ and } \alpha+2+\sqrt{6}i\gamma \in 2\ZZ \; , \\
\projPBpNAME{\gamma}{j}{\ell} &= \ind\left(\FPC{\gamma}{j}{\ell p}\right) \ &\text{with } j &\in \{0,1\},\ell \in \mathbb{Z} \text{ and } \; j+3\ell+\sqrt{6}i\gamma  \in 2\ZZ,
\end{align*}
with $\projPBpNAME{\gamma}{j}{\ell}$ being projective, and the above modules satisfy $E^V_{\gamma,\alpha} \cong E^V_{\gamma+i\sqrt{3/2}k,\alpha+3k},$ $E^S_{\gamma,j,\ell} \cong E^S_{\gamma+i\sqrt{3/2}k,j,\ell+k},$ $Q^V_{\gamma,\alpha} \cong Q^V_{\gamma+i\sqrt{3/2}k,\alpha+3k}$, and $Q^P_{\gamma,j,\ell}  \cong Q^P_{\gamma+i\sqrt{3/2}k,j,\ell+k}$ for all $k \in \mathbb{Z}$. The reducible indecomposables satisfy the following short exact sequences:
\begin{align*}
 0 \to E_{\gamma,2-j, \ell}^S \to Q_{\gamma,j+3\ell}^V \to E_{\gamma, j-1,\ell+1} \to 0\qquad\text{and}\qquad
0 \to E_{\gamma,2-j+3\ell}^V \to Q_{\gamma,j,\ell}^P \to E_{\gamma,1+j+3(\ell-1)} \to 0.
\end{align*}
The tensor decompositions are as follows:
\begin{align*}
E_{\gamma_1,i,\ell_1}^S \otimes E_{\gamma_2,j,\ell_2}^S \cong \bigoplus\limits_{\substack{ k=|i-j| \\ \text{by $2$}}}^{i+j} E_{\gamma_1+\gamma_2,k,\ell_1+\ell_2}^S\qquad\text{and}\qquad
E_{\gamma_1,0,\ell}^S \otimes E_{\gamma_2,\alpha}^V \cong E_{\gamma_1+\gamma_2,\alpha+3\ell}^V.
\end{align*}
Note that when $i=j=1$ above, we have $E_{\gamma_1+\gamma_2,2,\ell_1+\ell_2}^S \cong E_{\gamma_1+\gamma_2,3(\ell_1+\ell_2)}^V$. When $\alpha \in \mathbb{C}\setminus \mathbb{Z}$ or $\alpha=3\ell_2$, we get 
\begin{align*} E_{\gamma_1,1,\ell}^S \otimes E_{\gamma_2,\alpha}^V \cong E_{\gamma_1+\gamma_2,\alpha+1+3\ell}^V \oplus E_{\gamma_1+\gamma_2,\alpha-1+3\ell}^V\qquad\text{and}\qquad
E_{\gamma_1,1,\ell_1}^S \otimes E_{\gamma_2,3\ell_2}^V \cong Q_{\gamma_1+\gamma_2,1,\ell_1+\ell_2}^P
\end{align*}
respectively. For $\alpha,\beta \in \ddot{\mathbb{C}}$ with $\alpha + \beta \not \in \mathbb{Z}$, we have
\[ E_{\gamma_1,\alpha}^V \otimes E_{\gamma_2,\beta}^V \cong \bigoplus\limits_{\substack{ k=-2 \\ \text{by $2$}}}^{2} E_{\gamma_1+\gamma_2,\alpha+\beta+k}^V. \]
For $\alpha \in \ddot{\mathbb{C}}$, $\alpha+\beta=n$ with $n=3\ell, 1+3\ell$, and $2+3\ell$, we get
\begin{align*}
E_{\gamma_1,\alpha}^V \otimes E_{\gamma_2,\beta}^V &\cong Q_{\gamma_1+\gamma_2,0,\ell}^P \oplus E_{\gamma_1+\gamma_2,3\ell}^V\\
E_{\gamma_1,\alpha}^V \otimes E_{\gamma_2,\beta}^V &\cong Q_{\gamma_1+\gamma_2,1,\ell}^P \oplus E_{\gamma_1+\gamma_2,3(\ell+1)}^V\\
E_{\gamma_1,\alpha}^V \otimes E_{\gamma_2,\beta}^V &\cong Q_{\gamma_1+\gamma_2,1,\ell+1}^P \oplus E_{\gamma_1+\gamma_2,3\ell}^V\\
\end{align*}
respectively. We immediately obtain the following braidings:
\begin{align*}
c_{E_{\gamma_1,0,\ell_1}^S,E^S_{\gamma_2,i,\ell_2}}&=q^{\frac{1}{2}3\ell_1(3\ell_2+i)+3\gamma_1\gamma_2}\mathrm{Id}_{E_{\gamma_1+\gamma_2,i,\ell_1+\ell_2}^S}\\
c_{E_{\gamma_1,1,\ell_1}^S,E_{\gamma_2,1,\ell_2}^S}&=q^{\frac{1}{2}(3\ell_1+1)(3\ell_2+1)+3\gamma_1\gamma_2}\mathrm{Id}_{E_{\gamma_1+\gamma_2,3(\ell_1+\ell_2)}^V} \oplus q^{\frac{1}{2}(3\ell_1-1)(3\ell_2-1)+3\gamma_1\gamma_2}\mathrm{Id}_{E_{\gamma_1+\gamma_2,0,\ell_1+\ell_2}^S}\\
c_{E_{\gamma_1,0,\ell}^S,E_{\gamma_2,\alpha}^V}&= q^{\frac{3\ell}{2}(\alpha+2)+3\gamma_1\gamma_2}\mathrm{Id}_{E_{\gamma_1+\gamma_2,\alpha+3\ell}^V}\\
c_{E_{\gamma_1,1,\ell}^S \otimes E_{\gamma_2,\alpha}^V} &=q^{\frac{1}{2}(1+3\ell)(\alpha+2)+3\gamma_1\gamma_2}\mathrm{Id}_{E_{\gamma_1+\gamma_2,\alpha+1+3\ell}^V} \oplus q^{\frac{1}{2}(-1+3\ell)(\alpha-2)+3\gamma_1\gamma_2}\mathrm{Id}_{E_{\gamma_1+\gamma_2,\alpha-1+3\ell}^V}\\
c_{E_{\gamma_1,1,\ell_1}^S \otimes E_{\gamma_2,3\ell_2}^V} & =q^{\frac{1}{2}(1+3\ell_1)(2+3\ell_2)+3\gamma_1\gamma_2}\mathrm{Id}_{Q_{\gamma_1+\gamma_2,1,\ell_1+\ell_2}^P}\oplus n_{Q_{\gamma_1+\gamma_2,1,\ell_1+\ell_2}^P}\\
\end{align*}
with $n_{Q_{\gamma_1+\gamma_2,1,\ell_1+\ell_2}^P}$ a nilpotent endomorphism on $Q_{\gamma_1+\gamma_2,1,\ell_1+\ell_2}^P$.
 For $\alpha,\beta \in \ddot{\mathbb{C}}$ with $\alpha+\beta \not \in \mathbb{Z}$, we have
\begin{align*}
c_{E_{\gamma_1,\alpha}^V,E_{\gamma_2,\beta}^V}&=q^{\frac{1}{2}(\alpha+2)(\beta+2)+3\gamma_2\gamma_2}\mathrm{Id}_{E_{\gamma_1+\gamma_2,\alpha+\beta+2}^V} \oplus q^{\frac{1}{2}(\alpha-2)(\beta-2)+3\gamma_2\gamma_2}\mathrm{Id}_{E_{\gamma_1+\gamma_2,\alpha+\beta-2}^V}\\
& \oplus \left(q^{\frac{1}{2}(\alpha+2)(\beta-2)}+q^{\frac{1}{2}\alpha\beta}+q^{\frac{1}{2}(\alpha-2)(\beta+2)}\right)q^{3\gamma_1\gamma_2}\mathrm{Id}_{E_{\gamma_1+\gamma_2,\alpha+\beta}^V}
\end{align*}
For $\alpha \in \ddot{\mathbb{C}}$, $\alpha+\beta=n$ with $n=3\ell,1+3\ell$, and $2+3\ell$, we get
\begin{align*}
c_{E_{\gamma_1.\alpha}^V \otimes E_{\gamma_2,\beta}^V} &=\left( q^{\frac{1}{2}(\alpha+2)\beta}+q^{\frac{1}{2}\alpha(\beta+2)}\right)q^{3\gamma_1\gamma_2}\mathrm{Id}_{Q_{\gamma_1+\gamma_2,0,\ell}^P} \oplus n_{Q_{\gamma_1+\gamma_2,0,\ell}^P} \oplus q^{\frac{1}{2}(\alpha+2)(\beta+2)+3\gamma_1\gamma_2}\mathrm{Id}_{E_{\gamma_1+\gamma_2,3\ell}^V} \\
c_{E_{\gamma_1.\alpha}^V \otimes E_{\gamma_2,\beta}^V}&=\left(q^{\frac{1}{2}(\alpha+2)\beta}+q^{\frac{1}{2}\alpha(\beta+2)}\right)q^{3\gamma_1\gamma_2}\mathrm{Id}_{Q_{\gamma_1+\gamma_2,1,\ell}^P} \oplus n_{Q_{\gamma_1+\gamma_2,1,\ell}^P} \oplus q^{\frac{1}{2}(\alpha+2)(\beta+2)+3\gamma_1\gamma_2}\mathrm{Id}_{E_{\gamma_1+\gamma_2,3(\ell+1)}^V}\\
c_{E_{\gamma_1.\alpha}^V \otimes E_{\gamma_2,\beta}^V}&=\left( q^{\frac{1}{2}(\alpha+2)(\beta-2)}+q^{\frac{1}{2}(\alpha\beta}+ q^{\frac{1}{2}(\alpha-2)(\beta+2)}\right)q^{3\gamma_1\gamma_2}\mathrm{Id}_{E_{\gamma_1+\gamma_2,3\ell}^V}\\
& \qquad \oplus q^{\frac{1}{2}(\alpha+2)(\beta+2)+3\gamma_1\gamma_2}\mathrm{Id}_{Q_{\gamma_1+\gamma_1,1,\ell+1}^P}  \oplus n_{Q_{\gamma_1+\gamma_1,1,\ell+1}^P}
\end{align*}
for some nilpotent endomorphisms on the projective modules $Q^P$.

\subsubsection{Modularity and Verlinde's formula}

We divide modules into typical modules $\simTypiBpNAME{\gamma}{\alpha}$ and atypical modules  $\simAtypiBpNAME{\gamma}{i}{\ell}$. Modules of the $\Bp$-algebra are bigraded by conformal weight and also by the weight of the Heisenberg \voa. The corresponding graded trace turns out to only make sense as a formal power series in the case of typical module characters as formal delta distributions appear. Using the ideas of \cite{CR2, CR3} we can nonetheless compute a modular $S$-transformation giving us a certain function on the set of typical modules that we call $S$-kernel $\sS^\chi$.

 Atypical characters on the other hand turn out to converge in certain domains and then can be meromorphcally continued to components of vector-valued meromorphic Jacobi forms. It turns out, however, that different modules have the same (up to a possible sign) meromorphic continuation. This meromorphic vector-valued Jacobi form can be related to the semi-simplification of $\Rep^{0}(\Ap)$. For this recall that the semi-simplification of a category is the category obtained by quotienting negligible morphisms. To avoid confusion let us list the relevant definitions 
\begin{definition} Let $\mathcal C$ be a rigid braided tensor category. Let $M, N$ be objects, then a morphism $f:M \rightarrow N$ is negligible if for every morphism $g:N\rightarrow M$ the trace of $f\circ g$ vanishes. The semisimplification $\mathcal C^{\text{ss}}$ of $\mathcal C$ is the category whose objects are those of $\mathcal C$ but all negligible morphism are identified with the zero morphism. An object $M$ is called negligible if the identity on $M$ is a negligible morphism and in a rigid tensor category the subcategory $\mathcal N$ whose objects are all negligible objects forms a tensor ideal. 

Let $\mathcal G(\mathcal C)$ be the Grothendieck ring of $\mathcal C$ and let $\mathcal G(\mathcal N)$ be the Grothendieck ring of the ideal of negligible objects then we define the ring
\[
\mathcal G^{\text{ss}}(\mathcal C) := \mathcal G(\mathcal C)/\mathcal G(\mathcal N)
\]
and we note that in general $\mathcal G^{\text{ss}}(\mathcal C)$ is a homomorphic image of $\mathcal G(\mathcal C^{\text{ss}})$.
\end{definition}

The modular properties of characters of the $\Bp$-algebra and Hopf links of $\C$ are studied and compared in Sections \ref{Sectiontypical} and \ref{Sectionatypical}. In Section \ref{Sectiontypical}, the modular $S$-matrix $\sS^{\chi}$ coming from the modular action on characters for typical modules is computed and shown to agree with the $S$-matrix $\sS^{\hopflink}$ coming from closed Hopf-links associated to typical modules in $\C$ up to normalised conjugation in Proposition \ref{typicalcomparison}. To compare the atypical modules, the ring $\mathcal{G}^{\text{ss}}(\C^{0})$ of the category of local modules (those which are induced to $\mathrm{Rep}^0\Ap$ by the induction functor) is derived in Proposition \ref{Grothendieck}. The corresponding matrix $\sS^{\hopflink}$ is derived and shown to agree up to normalised conjugation with the matrix $\sS^{\chi}$ determined by the modular action on the Verlinde algebra of characters generated by atypical $\Bp$-modules when $p$ is odd (see Proposition \ref{atypicalhopf}). From this, the Verlinde formula immediately follows in Corollary \ref{Verlindeodd}. When $p$ is even, $\Bp$ is half-integer graded and we instead compare modular properties associated to its integer part $\Bp^{\overline{0}}$, showing again that the statement analagous to the following Theorem \ref{introcomparison} holds. 
\begin{theorem} \textup{(Verlinde's formula)}\label{thm:Verintro}

Let $\ast$ denote complex conjugation of the entries.
For the parametrization of atypical simples,  refer to the discussion around \eqref{Lambdap}.
 \label{introcomparison} 
 \begin{enumerate}
\item  \textup{(Proposition \ref{typicalcomparison})} Normalized character $\sS^{\chi}$ and Hopf link $\sS^{\hopflink}$ of typical modules agree up to complex conjugation, 
\[ 
\frac{\sS^{\chi \ast}_{(\nu, \ell),(\nu ', \ell ')}}{\sS^{\chi \ast}_{\mathds{1},(\nu ', \ell ')}}= \frac{\sS^{\hopflink}_{(\nu,\ell),(\nu ', \ell ')}}{\sS^{\hopflink}_{\mathds{1},(\nu ', \ell ')}}.\]
\item \textup{(Proposition \ref{atypicalhopf})} Let $p$ be odd.
Normalized Hopf link $\mathsf{S}^{\hopflink}$ and character $\mathsf{S}^{\chi}$ of atypical modules are in agreement up to complex conjugation,
\[
\frac{\sS^{\chi \ast}_{(s,s'),(n,n')}}{\sS^{\chi \ast}_{\mathds{1},(n,n')}}= \frac{\sS^{\hopflink}_{(s,s'),(n,n')}}{\sS^{\hopflink}_{\mathds{1},(n,n')}}.
 \]
\item \textup{(Corollary \ref{Verlindeodd})} Let $p$ be odd and let $\Lambda_p$ be a $\mathbb Z$-basis of $\mathcal G^{\text{ss}}(\mathrm{Rep}^0\Ap)$ with structure constants $N^{(k,k')}_{(s,s'),(t,t')}$ that is
\[
(k, k') \times (s, s') = \sum_{(n,n') \in \Lambda_p}N^{(k,k')}_{(s,s'),(t,t')} (t, t')
\]
for $(k, k')$ and $(s, s')$ in $\Lambda_p$.
Then the Verlinde formula holds 
\[
\sum\limits_{(n,n') \in \Lambda_p} \frac{\sS^{\hopflink}_{(s,s'),(n,n')}\sS^{\hopflink}_{(t,t'),(n,n')}(\sS^{\hopflink})^{-1}_{(n,n'),(k,k')}}{\sS^{\hopflink}_{\mathds{1},(n,n')}}=N^{(k,k')}_{(s,s'),(t,t')},
\]
\end{enumerate}
\end{theorem}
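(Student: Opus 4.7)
The plan is to treat items (1) and (2) by explicit side-by-side calculations, since item (3) follows from (2) by the standard diagonalization argument for Hopf links in any rigid braided tensor category. Throughout, the key structural fact exploited is the Deligne-product decomposition $\C = \HR \boxtimes \UHbar{2}\text{-Mod}$: both characters and Hopf links factor as a Heisenberg contribution (pure Gaussian-in-$\gamma$ exponentials of the form $e^{\pi i \gamma_1 \gamma_2}$) times a quantum-group contribution. Moreover, by the results on induction in \cite{CKM} as invoked in the main text, Hopf links commute with the induction functor $\ind : \C \to \Rep(\Ap)$, so it suffices to perform all Hopf-link computations in $\C_\oplus$ and sum over the simple-current orbit defining $\Ap$.

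For item (1), I would write the character of $\ind(\FV{\gamma}{\alpha})$ as a sum over the $\Ap$-orbit; the Heisenberg factor produces a Gaussian, while the typical factor $V_\alpha$ contributes a standard weight-space character. Applying the modular $S$-transformation in the distributional sense of \cite{CR2, CR3} yields the integral kernel $\sS^\chi$ of Section \ref{Sectiontypical}. On the categorical side, the Hopf link of two typical simples $V_\alpha, V_\beta$ in $\UHbar{2}$-Mod is a known scalar $q$-monomial, which when multiplied by the Heisenberg piece $e^{\pi i \gamma_1 \gamma_2}$ and summed over the orbit recovers the same kernel up to complex conjugation and normalization by the vacuum entry. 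This is a direct term-by-term match.

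For item (2), one first identifies $\mathcal G^{\text{ss}}(\mathrm{Rep}^0\Ap)$ via Proposition \ref{Grothendieck}: the projective covers $Q^V,Q^P$ are negligible, so the ring is spanned modulo the shift isomorphisms $E^S_{\gamma,i,\ell}\cong E^S_{\gamma + k\lambda_p, i, \ell+k}$ by a fundamental set of atypical simples. The basis $\Lambda_p$ from \eqref{Lambdap} selects one representative per orbit, chosen to be closed under duality so that semisimplified fusion coefficients are non-negative integers. Atypical characters converge on an explicit domain and meromorphically continue to components of a vector-valued Jacobi form whose $S$-transformation is computable by standard singlet/theta-function techniques, giving $\sS^\chi$. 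On the quantum group side, the atypical Hopf links $\hopf{\gamma_1}{i}{\ell_1}{\gamma_2}{j}{\ell_2}$ in $\C$ are explicit $q$-exponentials, and re-summing over the $\Ap$-orbit reproduces the same coefficients as the meromorphic $S$-kernel up to complex conjugation. Item (3) then follows: applying the Hopf-link pairing to $\ind(U)\boxtimes\ind(V)$ and using rigidity gives the universal identity
\[
\frac{\sS^{\hopflink}_{(s,s'),(n,n')}}{\sS^{\hopflink}_{\mathds{1},(n,n')}}\cdot\frac{\sS^{\hopflink}_{(t,t'),(n,n')}}{\sS^{\hopflink}_{\mathds{1},(n,n')}}=\sum_{(k,k')\in\Lambda_p}N^{(k,k')}_{(s,s'),(t,t')}\frac{\sS^{\hopflink}_{(k,k'),(n,n')}}{\sS^{\hopflink}_{\mathds{1},(n,n')}},
\]
and invertibility of $\sS^{\hopflink}$ on $\Lambda_p$ — inherited from modular invertibility of $\sS^\chi$ via item (2) — lets one invert to obtain the Verlinde formula of Corollary \ref{Verlindeodd}.

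The main obstacle is the bookkeeping around the distributional and meromorphic nature of the characters. In the typical case one must keep careful track of the delta-distribution contributions when pairing the $S$-kernel against test objects, and in the atypical case the correct choice of $\Lambda_p$ is crucial — a naive indexing would produce signed or duplicated fusion coefficients, exactly the historical source of the apparent ``negative fusion'' mentioned in the introduction. A secondary technicality is the parity of $p$: for even $p$ the algebra $\Bp$ is only $\tfrac{1}{2}\ZZ$-graded, so one works with the integer-graded subalgebra $\Bp^{\overline 0}$ and transports the comparison through the same induction machinery, which is routine once the odd case is in hand.
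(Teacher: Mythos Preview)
Your proposal is essentially correct and follows the same overall strategy as the paper: factor everything through the Deligne product, compute $\sS^\chi$ via the distributional/meromorphic character techniques of \cite{CR2,CR3,C}, compute $\sS^{\hopflink}$ from the known quantum-group data in \cite{CGP}, compare after normalization, and deduce Verlinde from invertibility. Item (3) is handled exactly as you describe.

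Two small corrections are worth noting, however. First, your repeated phrase ``summed over the orbit'' for the Hopf-link side is misleading and, taken literally, would not work: summing the scalar $\sS^{\hopflink}_{J^k\otimes U,\,J^{k'}\otimes V}$ over $k,k'\in\ZZ$ diverges. The paper instead invokes \cite[Theorem~2.89]{CKM} to assert that $\sS^{\hopflink}_{\ind(U),\ind(V)}=\sS^{\hopflink}_{U,V}$ as scalars, so one computes the Hopf link for a \emph{single} representative in $\C$ with no sum at all; the orbit sum occurs only on the character side. You state this correctly at the outset (``Hopf links commute with the induction functor'') but then contradict it later in the paragraph. Second, the Heisenberg Hopf-link contribution is $e^{2\pi i\gamma_1\gamma_2}$ (double braiding), not $e^{\pi i\gamma_1\gamma_2}$. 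Neither point changes the architecture of your argument, but both would need to be fixed in a full write-up.
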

We remark that the complex conjugation indicates that our conjectural relation between representation categories of unrolled quantum group and singlet algebra is a braid-reversed equivalence, see Remark \ref{remark:cc}.

It was conjectured in \cite[Remark 5.6]{C} that the $\Bp$ \voa{} is a quantum Hamiltonian reduction of $V_k(\mathfrak{sl}_{p-1})$ at level $k+p-1=\frac{p-1}{p}$.
Quantum Hamiltonian reductions are associated to nilpotent elements and the relevant nilpotent element $f$ for us corresponds to the partion $(p-2, 1)$ of $p-1$. We denote the corresponding simple $W$-algebra by $W_k(\mathfrak{sl}_{p-1}, f)$, 
 In Section \ref{SectionQH}, we confirm this conjecture up to character. That is, we show that (Theorem \ref{BpQH}):

\begin{theorem} 
The characters of the $\Bp$-algebra and of $W_k(\mathfrak{sl}_{p-1}, f)$ coincide for $k=-p+1+\frac{p-1}{p}$.
\end{theorem}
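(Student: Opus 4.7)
The plan is to verify the character identity directly, by computing both sides and matching them. On the $\Bp$ side, I can use the explicit formula for the graded character of $\Bp$ already appearing in the literature (for instance via the free-field realization of $\Bp$ as a simple-current extension of $\Mp \otimes \Heis$ developed in the earlier sections of this paper and in the references cited in the introduction such as \cite{CRW}); this gives a closed expression involving a theta-like lattice sum together with an eta-product denominator coming from the Heisenberg plus singlet contribution.

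On the $W$-algebra side, the level $k = -p+1+\tfrac{p-1}{p}$ satisfies $k+h^\vee = \tfrac{p-1}{p}$ with $h^\vee = p-1$ for $\mathfrak{sl}_{p-1}$, so $k$ is a (boundary) admissible level with denominator $p$. For such levels the vacuum character of $V_k(\mathfrak{sl}_{p-1})$ is given by the Kac--Wakimoto admissible-level character formula. The first substantive step is to apply the Kac--Roan--Wakimoto Euler--Poincar\'e principle to the quantum Drinfeld--Sokolov complex computing $H^0_{DS,f}(V_k(\mathfrak{sl}_{p-1}))$ for the nilpotent $f$ of Jordan type $(p-2,1)$. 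Because $f$ is of hook type, the associated $\tfrac{1}{2}\mathbb{Z}$-grading of $\mathfrak{sl}_{p-1}$ has a very simple block structure (one large block plus a single extra strand), and the ghost contributions to the BRST character organize into a compact product of $(q;q)_\infty$-factors indexed by the positive roots together with the rescaling of the Cartan fugacity dictated by the Dynkin grading.

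The second step is to specialize to the vacuum (that is, take the purely $L_0$-graded character). After the Euler--Poincar\'e specialization, the Kac--Wakimoto numerator collapses to a sum over a coset of the affine Weyl group which, for this particular Jordan type and this particular denominator $p$, is parametrized by a one-dimensional rank-$1$ lattice; this is precisely the lattice controlling the character of $\Bp$. I expect the comparison to reduce to an identity of the Macdonald/Weyl--Kac denominator type, matching the theta series in the $\Bp$ character with the Weyl-numerator remaining after ghost cancellation, while the infinite products on both sides match up using the Jacobi triple product together with a standard eta-quotient manipulation.

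The main obstacle will be the bookkeeping in Step 2: correctly tracking the Dynkin grading for the $(p-2,1)$ nilpotent, separating the positive roots into the $\geq 1$, $\tfrac{1}{2}$ and $0$ pieces, and identifying precisely which Kac--Wakimoto summands survive the Euler--Poincar\'e reduction at the admissible denominator $p$. Once this bookkeeping is done, the final comparison with the $\Bp$ character should be a direct application of the Jacobi triple product and the pentagonal-type eta identities, so the heart of the proof is the Kac--Roan--Wakimoto computation for this specific $(f,k)$.
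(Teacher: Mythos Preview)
What you have written is a plan, not a proof: none of the computations are actually carried out. You yourself identify ``the bookkeeping in Step 2'' as the main obstacle and then stop before doing any of it. For a result of this type the entire content \emph{is} that bookkeeping; there is no soft argument that lets you bypass it. So as it stands there is nothing to evaluate for correctness beyond the choice of strategy.

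On strategy, your route differs from the paper's in two respects. First, the paper does not rederive the reduction character from the Euler--Poincar\'e principle; it invokes the explicit Kac--Wakimoto boundary-admissible formula \cite[Equation (11)]{KW}, which already packages the quantum Hamiltonian reduction character as a ratio of Jacobi theta functions and $\eta$-products (formula \eqref{eq:KW}). From there the argument is purely a manipulation of infinite products: one computes $\alpha(K-\tau H)$ for every positive root of $\sln{p-1}$ relative to the triple \eqref{eq:sl2tripleQHcharBp}, simplifies the resulting $\vartheta_{11}/\eta$ factors, and arrives at a closed product \eqref{eq:finalKW}. On the $\Bp$ side the paper uses the product form of $\mathrm{ch}[\Bp]=\mathrm{ch}[W_1]$ from \cite[Proposition 5.2]{C}, and the two products are matched directly. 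This is considerably shorter than running Kac--Roan--Wakimoto from scratch and then invoking a Macdonald-type identity; the latter is a valid path in principle, but you would be re-proving \cite{KW} along the way.

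Second, you propose to ``specialize to the vacuum (that is, take the purely $L_0$-graded character).'' The paper does \emph{not} do this: it keeps the one remaining Cartan fugacity $x$, coming from the centralizer $\Hlie^F=\Span_{\CC}\{A\}$ of $F$, throughout the computation. Setting $x=1$ would both throw away information and make several of the intermediate $\vartheta_{11}$ factors vanish (e.g.\ $\vartheta_{11}(\tau,0)=0$ appears in the denominator for $p$ odd), so the limit would have to be taken carefully rather than naively specialized. If you pursue your approach you should retain this variable.

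In short: the outlined strategy is reasonable but unexecuted, and the paper's proof is both more direct (via \cite{KW}) and retains the extra grading variable that your specialization would discard.
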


\subsection*{Acknowledgements}
We thank Andrew Ballin for his comments.
 TC is supported by NSERC $\#$RES0020460. SK is supported by a start-up grant provided by University of Denver.

\section{Background}

We will use simple currents contained in the Deligne product of the representation categories of the Singlet and Heisenberg \voas{} to construct a \voa{} known as the $\Bp$-algebra (see \cite{C},\cite{CRW}). It was shown in \cite{CMR} that there exists a bijection between a particular category of modules for a \voa{} called the singlet algebra and the category of weight modules for the unrolled restricted quantum group of $\sln{2}$, $\UHbar{2}$.
Further structures as tensor products and open Hopf links were succesfully matched with conjectural fusion producs on the singlet algebra and asymptotic dimensions of characters, which led to the conjecture that these categories are equivalent as ribbon categories. Further evidence for this conjecture has been given in \cite{CGR}.

 In this section we recall the definition of $\UHbar{2}$ and the structure of its category of weight modules, as seen in \cite{CGP}. We will also recall the definition of the Heisenberg \voa{} and the category of modules we are interested in.

\subsection{The unrolled restricted quantum group of $\sln{2}$ and its weight modules}

Let $p \geq 2$ be a positive integer and 
\begin{align}
q=e^{\pi i/p}
\end{align}
a $2p$-th root of unity. For any $x \in \mathbb{C}$ we choose the notation
\begin{align}
 \{x\}=q^x-q^{-x}, [x]=\frac{\{x\}}{\{1\}}, \text{ and for any $n \in \mathbb{Z}$, } \{n\}!=\{n\}\{n-1\}...\{1\}. 
 \end{align}
The quantum group associated to $\sln{2}$, $\U{2}$ is the associative algebra over $\CC$ with generators $E,F,K,K^{-1}$ and relations
\[
KK^{-1}=K^{-1}K=1, \quad KE=q^2EK, \quad KF=q^{-2}FK, \quad  [E,F]=\frac{K-K^{-1}}{q-q^{-1}}.
\]

This algebra has Hopf algebra structure given by counit $\epsilon: \U{2} \to \CC$, coproduct $\Delta: \U{2} \to \U{2} \otimes \U{2}$, and antipode $S:\U{2} \to \U{2}$ defined by
\begin{align*}
\Delta(K)&=K \otimes K, & \epsilon(K)&=1, & S(K)&=K^{-1},\\
\Delta(E)&=1 \otimes E + E \otimes K, & \epsilon(E)&=0, & S(E)&=-EK^{-1},\\
\Delta(F)&=K^{-1} \otimes F+F \otimes 1, & \epsilon(F)&=0, & S(F)&=-KF.
\end{align*}

The unrolled quantum group of $\sln{2}$, $\UH{2}$, is defined by extending $\U{2}$ through the addition of a fifth generator $H$ with relations
\[ HK^{\pm 1}=K^{\pm 1}H, \qquad [H,E]=2E, \qquad [H,F]=-2F. \]
The counit, coproduct, and antipode can be extended to $U_q^H(\mathfrak{sl}_2)$ by defining
\[ \Delta(H)=H \otimes 1 + 1\otimes H, \qquad \epsilon(H)=0, \qquad S(H)=-H. \]

The unrolled restricted quantum group of $\sln{2}$, $\UHbar{2}$, is then obtained taking the quotient of $\UH{2}$ by the relations $E^p=F^p=0$.\\
A finite dimensional $\UHbar{2}$-module V is called a weight module if it is a direct sum of its $H$-eigenspaces ($H$ acts semisimply) and $K=q^H$ as an operator on V. Let $\UHbar{2}$-Mod denote the category of weight modules for $\UHbar{2}$. A classification of simple and projective modules was given in \cite{CGP} as follows:\\

Given any $n \in \{0,...,p-1\}$, let $S_n$ be the simple highest weight module of weight $n$ and dimension $n+1$ with basis $\{s_0,...,s_n\}$ and action
\[Fs_i=s_{i+1}, \quad Es_i=[i][n+1-i]s_{i-1}, \quad Hs_i=(n-2i)s_i, \quad Es_0=Fs_n=0. \]

For any $\alpha \in \CC$, define $V_{\alpha}$ to be the $p$-dimensional highest weight module of highest weight $\alpha+p-1$, whose action is defined on a basis $\{v_0,...,v_{p-1}\}$ as
\[ Fv_i=v_{i+1}, \quad Ev_i = [i][i-\alpha]v_{i-1}, \quad Hv_i=(\alpha+p-1-2i)v_i, \quad Ev_0=Fv_{p-1}=0.\\ \]

$V_{\alpha}$ is called typical if $\alpha \in \ddot{\CC}:=(\CC-\ZZ) \cup p\ZZ$ and atypical otherwise. The typical $V_{\alpha}$ are simple since any basis vector $v_i$ can generate a scalar multiple of every other basis vector through the action of $E$ and $F$. If $V_{\alpha}$ is atypical, then we have $\alpha=pm+k$ for some $m \in \ZZ$ and $1 \leq k \leq p-1$. Hence,
\[E v_{k}=-[k][k-(pm+k)]v_{k-1}=[k][pm]v_{k-1}=0,\]
since $\{pm\}=q^{pm}-q^{-pm}=(-1)^m-(-1)^{-m}=0$. So, when $V_{\alpha}$ is atypical, it contains a simple submodule generated by the basis elements $\{v_k,v_{k+1},...,v_{p-1}\}$. \\

For any $\ell \in \ZZ$, let $\CC^H_{\ell p}$ denote the one dimensional module on which $E$ and $F$ act as zero and $H$ acts as scalar multiplication by $\ell p$. Then the following holds

\begin{proposition}$\textup{\cite[Theorem 5.2 and Lemma 5.3]{CGP}}$
\begin{enumerate}
\item  The typical $V_{\alpha}$ are projective.
\item Every simple module in $\UHbar{2}$-Mod is isomorphic to $\SC{n}{\ell p}$ for some $n \in \{0,...,p-2\}$ and $\ell \in \ZZ$ or $V_{\alpha}$ for some $\alpha \in \CCdot$.
\end{enumerate}
\end{proposition}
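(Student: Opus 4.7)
The plan is to establish (2) first via highest weight theory, and then to prove (1) by a direct lifting argument that uses typicality to solve the splitting obstruction inductively.

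\emph{Classification (2).} Let $M$ be a simple weight module. Finite-dimensionality together with $E^p = 0$ guarantees the existence of a nonzero highest weight vector $v$ of some weight $\lambda$, and simplicity forces $M = \mathrm{span}\{F^i v : i \ge 0\}$, a sequence that terminates both by $F^p = 0$ and by finite-dimensionality. The standard identity
\begin{equation*}
E F^i v = [i]\,[\lambda - i + 1]\, F^{i-1} v,
\end{equation*}
proved by induction from $[E,F] = (K - K^{-1})/(q - q^{-1})$, then controls termination. If the chain has length $n + 1 \le p - 1$, the vanishing coefficient $[\lambda - n]$ forces $\lambda \in n + p\ZZ$, and $M \cong \SC{n}{\ell p}$. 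Otherwise the chain has maximal length $p$, and one has $\lambda = \alpha + p - 1$ with $\alpha \in \CCdot$ (typicality being precisely the condition that $[\alpha + p - i] \ne 0$ for $1 \le i \le p - 1$, preventing early termination), giving $M \cong V_\alpha$.

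\emph{Projectivity (1).} Given a short exact sequence $0 \to L \to M \to V_\alpha \to 0$ in $\UHbar{2}$-Mod with $\alpha$ typical, the plan is to construct an explicit section. Lift the top vector $v_0 \in V_\alpha$ to a weight vector $\tilde v_0 \in M$ of weight $\alpha + p - 1$, and set $\tilde v_i := F^i \tilde v_0$. The desired relations $E \tilde v_i = [i][i - \alpha] \tilde v_{i-1}$ hold only up to correction terms $\ell_i \in L$ of the appropriate weight, and the question is whether $\tilde v_0$ can be adjusted by an element of $L$ so that all $\ell_i$ vanish. Typicality of $\alpha$, namely $\alpha \in (\CC \setminus \ZZ) \cup p\ZZ$, guarantees that the coefficients $[i][i - \alpha]$ are nonzero for all $1 \le i \le p - 1$, giving exactly enough algebraic freedom to solve these obstruction equations inductively. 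Once the relations hold, the span of the $\tilde v_i$ forms a submodule of $M$ isomorphic to $V_\alpha$ that splits the sequence.

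\emph{Main obstacle.} The delicate point in (1) is verifying that the lifted span is genuinely closed under the full $\UHbar{2}$-action: in particular $F \tilde v_{p-1} = 0$ and the $K$-eigenvalues match correctly after each inductive correction. This relies simultaneously on the restriction relation $F^p = 0$ and on the matching identity $[p][\alpha] = 0$ (since $[p] = 0$ at the root of unity $q = e^{\pi i / p}$), which together close the induction. An alternative route is to invoke the quantum Casimir $C = FE + (qK + q^{-1}K^{-1})/(q - q^{-1})^2$ to show that typical $V_\alpha$ lies in its own block with respect to all atypical simples, reducing the splitting question to possible self-extensions; the latter are then handled by the lifting argument above. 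I expect the inductive lifting step to be the principal technical obstacle, as it is precisely where the typicality hypothesis is used in an essential way.
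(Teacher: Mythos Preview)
The paper does not prove this proposition; it simply cites \cite[Theorem 5.2 and Lemma 5.3]{CGP}. So there is no proof in the paper to compare against, and I will assess your argument on its own.

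Your classification argument (2) is essentially correct. One phrasing issue: in the $p$-dimensional case, typicality is not what ``prevents early termination'' of the $F$-chain (that chain always has length $p$ because $F^p=0$); rather, typicality is what prevents a proper submodule from appearing, i.e., what forces $EF^iv\neq 0$ for $1\le i\le p-1$. This does not affect correctness.

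Your projectivity argument (1) has a genuine gap. With $\tilde v_i := F^i\tilde v_0$, the relation $F\tilde v_{p-1}=0$ is immediate from $F^p=0$, so that is not the obstacle. The real obstruction is the \emph{top} relation $E\tilde v_0=0$: the error term $\ell_0:=E\tilde v_0$ lives in $L_{\alpha+p+1}$, and your only freedom is to replace $\tilde v_0$ by $\tilde v_0+m$ with $m\in L_{\alpha+p-1}$, changing $\ell_0$ to $\ell_0+Em$. You assert that the non-vanishing of $[i][i-\alpha]$ ``gives exactly enough algebraic freedom,'' but those coefficients pertain to $V_\alpha$, not to $L$, and there is no reason that $E\colon L_{\alpha+p-1}\to L_{\alpha+p+1}$ hits $\ell_0$. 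For instance, with $L=V_{\alpha+2p}$ one has $L_{\alpha+p-1}=0$, so no adjustment is possible.

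What actually closes the argument is the relation $E^p=0$, which you never invoke. Set $x:=E^{p-1}F^{p-1}\tilde v_0\in M_{\alpha+p-1}$. Then $Ex=E^pF^{p-1}\tilde v_0=0$, while the image of $x$ in $V_\alpha$ is $E^{p-1}v_{p-1}=\Big(\prod_{i=1}^{p-1}[i][i-\alpha]\Big)v_0$. Typicality of $\alpha$ makes this product nonzero, so a suitable rescaling of $x$ is an $E$-annihilated lift of $v_0$, and its $F$-orbit splits the sequence. This is where the non-vanishing of the $[i][i-\alpha]$ is genuinely used.

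Your proposed Casimir alternative is also incomplete: the Casimir does separate typical $V_\alpha$ from all atypical simples, but it does \emph{not} separate $V_\alpha$ from $V_{\alpha\pm 2pk}$, so you are not reduced merely to self-extensions.
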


A weight vector $v$ is called dominant if $(FE)^2v=0$. If $v$ is a dominant weight vector of weight $i$, then we denote by $P_i$ the module generated by $v$. This module's structure is given explicitly in \cite[Section 6]{CGP}, and the following proposition was proven therein:

\begin{proposition}
The module $P_i$ is projective and indecomposable with dimension $2p$. Any projective indecomposable module with integer highest weight $(\ell+1)p-i-2$ is isomorphic to $\PC{i}{\ell p}$.
\end{proposition}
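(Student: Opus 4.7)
The plan is to construct $P_i$ explicitly from the dominant weight vector $v$ and analyze its structure by direct computation. Since $v$ has weight $i$ and satisfies $(FE)^2 v = 0$, and since $E^p = F^p = 0$ in $\UHbar{2}$, I would first show that the collection $\{F^a v, F^b E v : 0 \le a, b \le p-1\}$ spans the submodule generated by $v$. Using the commutation relation $[E, F] = (K-K^{-1})/(q-q^{-1})$ together with the dominance hypothesis, one checks which of these vectors coincide or vanish; a careful bookkeeping yields exactly $2p$ linearly independent vectors, establishing $\dim P_i = 2p$.

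Next I would identify the Loewy structure. The vector $E v$ generates a simple highest-weight submodule of $P_i$ (because $E^2 v$ must vanish by the dimension count and dominance), while the socle is also simple with the same composition type as the head $S_i$. This yields a diamond-shaped Loewy diagram with simple head $S_i$, two isomorphic middle composition factors of the form $\simAtypiBpNAME{}{p-2-i}{\pm 1}$-type summands, and a simple socle $S_i$. Indecomposability follows immediately: any endomorphism must preserve the socle filtration, so $\End(P_i)$ is spanned by the identity and a single nilpotent map collapsing head onto socle, making it local.

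For projectivity, the cleanest route is to realize $P_i$ as a direct summand of a tensor product $V_\alpha \otimes S_n$ (or of $S_n \otimes V_\alpha$) for appropriate typical $\alpha$ and simple $S_n$; since typical $V_\alpha$ is projective by the preceding proposition and tensoring with a rigid object preserves projectivity, any direct summand is projective. Matching characters and highest-weight spaces shows that $P_i$ appears with multiplicity one in a suitable such tensor product, so $P_i$ is projective. This step is where the main work lies, as it depends on having some control over tensor product decompositions in $\UHbar{2}$-Mod at a root of unity; the alternative of computing $\mathrm{Ext}^1(P_i, S_j)$ directly from a projective presentation is feasible but more laborious.

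Finally, for the uniqueness statement, let $Q$ be any projective indecomposable with integer highest weight $(\ell+1)p - i - 2$. Its head must be a simple module of the same highest weight, which forces $\mathrm{head}(Q) \cong S_i \otimes \CC^H_{\ell p}$ since the simple $\SC{i}{\ell p}$ is the unique simple with that integer highest weight. Both $Q$ and $\PC{i}{\ell p}$ are then projective covers of $\SC{i}{\ell p}$, and projective covers in a Krull--Schmidt abelian category are unique up to isomorphism, giving $Q \cong \PC{i}{\ell p}$. The anticipated bottleneck is the projectivity argument in the third step; everything else is linear algebra once the Loewy structure is pinned down.
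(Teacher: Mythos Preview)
The paper does not supply its own proof of this proposition; the sentence immediately preceding it says the structure of $P_i$ is ``given explicitly in \cite[Section 6]{CGP}, and the following proposition was proven therein.'' So there is nothing in the present paper to compare your argument against.

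That said, your outline is essentially the route taken in \cite{CGP}: build $P_i$ explicitly from the dominant vector, read off the Loewy structure (head and socle $S_i$, middle factors of type $S_{p-2-i}\otimes\CC^H_{\pm p}$), deduce indecomposability from the local endomorphism ring, and obtain projectivity by exhibiting $P_i$ as a direct summand of a tensor product with a typical (hence projective) $V_\alpha$. One small correction to your sketch: in \cite{CGP} the relevant tensor product is of the form $V_\alpha\otimes V_\beta$ for suitable typical $\alpha,\beta$ (see their Section 8), not $V_\alpha\otimes S_n$; the latter can be arranged to work as well, but you should be aware that $S_n\otimes V_\alpha$ is already isomorphic to a direct sum of typical modules when $\alpha$ is generic, so care is needed in choosing parameters so that a $P_i$ actually appears. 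The uniqueness argument via projective covers is standard and matches what is implicit in \cite{CGP}.
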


If $V$ is an object in $\UHbar{2}$-Mod with basis $\{v_1,...,v_n\}$, then $V$ has the obvious dual vector space $V^*=\Hom_{\CC}(V,\CC)$ with basis $\{v_1^*,...,v_n^*\}$ and action $af(v)=f(S(a)v)$ for $f \in V^*, a \in \UHbar{2}$. The left duality morphisms are given by
\[\overrightarrow{\text{coev}}_V:\CC \rightarrow V \otimes V^* \quad \text{ and } \quad \overrightarrow{\text{ev}}_V:V^* \otimes V \rightarrow \CC,\]
where $ \overrightarrow{\text{coev}}(1)=\sum\limits_i^n v_i \otimes v_i^*$, and $\overrightarrow{\text{ev}}(f \otimes w)=f(w)$. Ohtsuki defined in \cite{O} the $R$-matrix operator on $\UHbar{2}$-Mod by
\begin{align}\label{Uqbraiding}
R=q^{H \otimes H/2} \sum\limits_{n=0}^{p-1}\frac{\{1\}^{2n}}{\{n\}!}q^{n(n-1)/2}E^n \otimes F^n,
\end{align}

where $q^{H \otimes H/2}(v \otimes w)=q^{\lambda_v \lambda_w/2}v \otimes w$ for weight vectors $v,w$ with weights $\lambda_v$ and $\lambda_w$. The braiding on $\UHbar{2}$-Mod is then given by the family of maps $c_{V,W}:V \otimes W \rightarrow W \otimes V$ where $c_{V,W}(v \otimes w) = \tau(R(v \otimes w))$ where $\tau$ is the flip map $w \otimes v \mapsto v \otimes w$. Ohtsuki also defined an operator $\widetilde{\theta}_V:V \rightarrow V$ on each $V \in \UHbar{2}\text{-Mod}$ by
\begin{align}\label{sl2twist}
\widetilde{\theta}=K^{p-1} \sum\limits_{n=0}^{p-1} \frac{\{1\}^{2n}}{\{n\}!}q^{n(n-1)/2} S(F^n)q^{-H^2 /2}E^n.
\end{align}

The twist $\theta_V:V \rightarrow V$ is then given by the operator $v \mapsto \widetilde{\theta}^{-1}v$. $\UHbar{2}$-Mod also admits compatible right duality morphisms
\begin{align*}
\overleftarrow{\text{ev}}_V&:V \otimes V^*: \rightarrow \CC, \qquad \overleftarrow{\text{ev}}_V(v \otimes f)=f(K^{1-p}v),\\
\overleftarrow{\text{coev}}_V&:\CC \rightarrow V^* \otimes V, \qquad \overleftarrow{\text{coev}}(1)=\sum\limits_i K^{p-1}V_i \otimes v_i^*.
\end{align*}

The following Lemma was proved in \cite[Proposition 6]{R} using \cite{CGP} and will be used in multiple results:

\begin{lemma} \label{prop:SESresolutionofAtypicals}
For any $k \in \{1,...,p-1 \}$ there are short exact sequences of modules
\begin{align*}
0 \rightarrow \SC{p-1-k}{\ell p} \rightarrow&V_{k+\ell p} \rightarrow \SC{k-1}{(\ell+1)p} \rightarrow 0,\\
0 \rightarrow V_{p-1-i+\ell p} \rightarrow P_i & \otimes \mathbb{C}_{\ell p}^H \rightarrow V_{1+i-p+\ell p} \rightarrow 0.
\end{align*}
\end{lemma}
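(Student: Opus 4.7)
The plan is to handle each sequence separately: the first by a direct basis-level computation in the atypical module $V_{k+\ell p}$, and the second by unpacking the explicit $2p$-dimensional construction of the projective cover $P_i$ given in \cite[Section 6]{CGP}.

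For the first SES, the paragraph preceding the lemma already shows that $E v_k = 0$, via $\{\ell p\} = (-1)^\ell - (-1)^{-\ell} = 0$. Hence $N := \mathrm{Span}\{v_k, v_{k+1}, \dots, v_{p-1}\}$ is a $(p-k)$-dimensional submodule of $V_{k+\ell p}$, with highest-weight vector $v_k$ of $H$-weight $(p-1-k) + \ell p$; since $p-1-k \in \{0,\dots,p-2\}$ this identifies $N \cong \SC{p-1-k}{\ell p}$. The quotient is $k$-dimensional, generated by the image of $v_0$, a highest-weight vector of weight $(k-1) + (\ell+1)p$, yielding $V_{k+\ell p}/N \cong \SC{k-1}{(\ell+1)p}$ and the first sequence.

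For the second SES, the strategy is to extract a Verma-type filtration of $P_i$ from the explicit basis description in \cite[Section 6]{CGP}. The projective $P_i$ has a single highest-weight vector, of weight $2p-2-i$, which lies in the middle Loewy layer $S_{p-2-i} \otimes \mathbb{C}^H_p$ of $P_i$; repeated application of $F$ produces a chain of $p$ linearly independent weight vectors annihilated by $E$ only at the terminal step, giving a submodule $M \subseteq P_i$ with the same highest weight and dimension as $V_{p-1-i}$, and hence isomorphic to $V_{p-1-i}$. At the composition-factor level, $M$ absorbs the socle $S_i$ of $P_i$ together with one copy of the middle-layer factor $S_{p-2-i} \otimes \mathbb{C}^H_p$, leaving a quotient $P_i/M$ with socle $S_{p-2-i} \otimes \mathbb{C}^H_{-p}$ and head $S_i$, which matches $V_{1+i-p}$ (as $1+i-p = (1+i) + (-1)p$ is atypical with the appropriate composition factors). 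Finally, $- \otimes \mathbb{C}^H_{\ell p}$ is exact and shifts all $H$-weights by $\ell p$, producing the stated SES.

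The main obstacle is the second SES: one must verify, at the level of the basis given in \cite[Section 6]{CGP}, that the $E$-action does not prematurely truncate the submodule generated by the highest-weight vector of $P_i$ before reaching the socle, so that $M$ really is all of $V_{p-1-i}$ rather than a proper quotient of it. Once this is in hand, composition-factor bookkeeping together with exactness of tensoring by $\mathbb{C}^H_{\ell p}$ completes the argument; the full verification is recorded in \cite[Proposition 6]{R}.
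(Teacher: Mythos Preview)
The paper does not give its own proof of this lemma: it simply records that the result ``was proved in \cite[Proposition 6]{R} using \cite{CGP}''. Your proposal goes further, correctly sketching the argument that underlies that reference. The first SES follows exactly as you describe, and indeed the paragraph preceding the lemma in the paper already establishes that $Ev_k=0$ and identifies the submodule; your identification of the highest weights of the socle and quotient is correct. For the second SES, your filtration argument is the right one: the top $H$-weight $2p-2-i$ in $P_i$ is carried by a unique (up to scalar) vector annihilated by $E$, and the cyclic $F$-submodule it generates is $p$-dimensional and closed under $E$ by the commutation relation, hence isomorphic to $V_{p-1-i}$; the quotient then has the correct composition factors to be $V_{1+i-p}$. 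You also correctly flag the one non-formal point, namely that $F^{p-1}$ applied to the highest-weight vector is nonzero (equivalently, that the chain reaches the socle $S_i$), which is what requires the explicit basis of \cite[Section~6]{CGP}. In short, your proposal is a faithful expansion of what the paper leaves to the citation.
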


\subsection{Heisenberg \voa{} and its Fock modules}\label{Heisenberg}

The rank-$1$ Heisenberg Lie algebra, denoted by $\Hlie$, has vector space basis given by $\{\textbf{c},b_n|n\in \ZZ \}$ and bracket
\[ [\textbf{c},b_n]=0 \text{ and } [b_n,b_m]=n\delta_{n+m,0}\textbf{c}.\\ \]
Let
$\Hlie_{\pm}=\text{Span}_{\CC}\{b_n\,|\, \pm n>0\}.$ Let $\mathcal{U}(\mathfrak{g})$ denote the universal enveloping algebra
of any Lie algebra $\mathfrak{g}$.
Denote by $\FockC{\beta}$ the usual Fock space of charge $\beta\in\CC$ with vector space basis 
$\mathcal{U}(\Hlie_{-})$
and $\Hlie$-action on an arbitrary element $b \in \FockC{\beta}$ given by
\begin{align*}
\mathbf{c} \cdot b&=b,\\
b_0 \cdot b&=\beta b,\\
b_n \cdot b&= b_nb \quad \text{for all $n<0$},\\
b_n \cdot b&=n \frac{\partial}{\partial b_{-n}} b \quad \text{for all $n>0$}.
\end{align*}

\begin{definition}
The Heisenberg \voa{} $\Heis=(\FockC{0},1,Y,T,\omega)$ is given by the following data (see \cite[Chapter 2]{BF}):
\begin{itemize}
\item a $\mathbb{Z}_+$-gradation deg($b_{j_1} \dotsb b_{j_k})=-\sum_{i=1}^k j_i$,
\item a vacuum vector $|0\rangle=1$,
\item a translation operator T defined by $T(1)$=0, $[T,b_i]=-ib_{i-1}$,
\item vertex operators $Y(-,z)$ defined by
\[ Y(b_{j_1} \dotsb b_{j_k},z)=\frac{:\partial_z^{-j_1-1}b(z) \dotsb \partial_z^{-j_k-1}b(z):}{(-j_1-1)! \dotsb (-j_k-1)!},\]
where $:X(z)Y(z):$ denotes the normally ordered product. 
\item a conformal vector $\omega=b_{-1}^2$ of central charge 1.
\end{itemize}

If $b' \in \Heis$ and $b \in \FockC{\beta}$ then $\Heis$ acts on $\FockC{\beta}$ as $b'(b)=Y(b',z)b$ as an extension of the action of $\mathfrak{h}$. We can therefore consider the representation category $\Heis$-Mod generated by the Fock spaces. This category is rigid (contains duals) and has braiding and twist. The data is
\begin{enumerate}
\item
$\FockC{\beta}^* \simeq \FockC{-\beta}$, \qquad\qquad\qquad\qquad\qquad (Duals)
\item
$\FockC{\beta_1} \otimes \FockC{\beta_2} \simeq \FockC{\beta_1+\beta_2}$, \qquad\qquad\quad \ \ (Fusion/Tensor products)
\item
$c_{\FockC{\beta_1} \otimes \FockC{\beta_2}}=e^{\pi i \beta_1 \beta_2}\Id_{\FockC{\beta_1} \otimes \FockC{\beta_2}}$, \quad\ \, (Braidings)
\item $\theta_{\FockC{\beta}}=e^{\pi i \beta^2}\Id_{\FockC{\beta}}$, \qquad\qquad\qquad\ \  \, (Ribbon Twists)
\end{enumerate}
\end{definition}

Let $\mathcal{H}^{\oplus}$ denote the category of $\CC$-graded complex vector spaces with finite or countable dimension and let $H_V:V \to V$ be the degree map on $V:= \bigoplus\limits_{\nu \in \CC} V_{\nu}$ given by $H_V|_{V_{\nu}}=\nu \Id_{V_{\nu}}$. $\mathcal{H}^{\oplus}$ can be given (non-unique) ribbon structure by defining the braiding $c$ and twist $\theta$ by
\begin{align}
\label{HRbraidtwist}c_{U,V}&:=\tau_{U,V} \circ e^{\pi i H_U \otimes H_V},\\
\theta_V&:=e^{\pi i H_V^2}\Id_{V},
\end{align}

where $\tau_{U,V}$ is the usual flip map. Denote by $\mathcal{H}^{\oplus}_{i\mathbb{R}}$ the full tensor subcategory of $\mathcal{H}^{\oplus}$ with purely imaginary index. $\HR$ is braided equivalent to the full subcategory $\Heis$-Mod whose simple objects are given by Fock modules $\FockC{iy}$ with $y\in \mathbb{R}$, or, $iy \in i \mathbb{R}$ (see \cite[Subsection 2.3]{CGR} for details). The equivalence is given by identifying the usual Fock space $\FockC{i y}$ with the one dimensional vector space of degree $iy$, $\FockH{iy}:=\mathbb{C}v_{iy}$.\\

\subsection{The Singlet \voa{} $\Mp$ and its category of modules}

Let $p \in \ZZ_{>0}$ and $L:=\sqrt{2p}\ZZ$ an even lattice. The lattice VOA $V_L:=\bigoplus_{l \in L} \FockC{l}$ associated to $L$ can be constructed via the reconstruction theorem as outlined in \cite{BF}[Proposition 5.2.5]. Let $e^{\gamma}(z)$ denote the usual fields associated to lattice \voas:

\[e^{\gamma}(z):=S_{\gamma}z^{\gamma b_0}\exp\left( -\gamma \sum\limits_{n<0} \frac{b_n}{n}z^{-n}\right)\exp\left(-\gamma \sum\limits_{n \geq 0} \frac{b_n}{n}z^{-n} \right),\]

where $S_{\gamma}$ is the shift operator $\FockC{\beta} \to \FockC{\beta+\gamma}$. Define the screening operator $\widetilde{Q}=e^{-\sqrt{\frac{2}{p}}}_0$ where $e^{\gamma}(z)=\sum\limits_{n \in \ZZ}e_n^{\gamma}z^{-n-1}$ is the Fourier expansion of $e^{\gamma}(z)$. The Singlet VOA is then defined as the kernel $\Mp:=\text{Ker}_{\FockC{0}}(\widetilde{Q})$. For $r,s\in\ZZ, 1\leq s \leq p$, let $\alpha_{r,s}=-\frac{r-1}{2}\sqrt{2p}+\frac{s-1}{\sqrt{2p}}$. For $s=p$, the Fock space with $\FockC{\alpha_{r,s}}$  is simple as an $\Mp$-module, which we denote by $\FockS{\alpha_{r,s}}$.
When $s \neq p$, $F_{\alpha_{r,s}}$ is reducible and we define $M_{r,s}$ to be the socle of $F_{\alpha_{r,s}}$ which is known to be a simple $\Mp$-module.\\

It is expected that the module categories of $\Mp$ and $\UHbar{2}$ are equivalent as monoidal (or perhaps braided) categories. This is motivated by the following statement proven in \cite{CMR}:

\begin{proposition} \label{correspondence}
{For $\alpha \in \ddot{\mathbb{C}}:=(\mathbb{C} - \mathbb{Z}) \cup p\mathbb{Z}$,
	$i\in \{0,1,\dots,p-2\}$ and $k\in\ZZ$,}
 the map 
\begin{align}\label{eqn:SimplesQuantumSinglet}
 \varphi: V_{\alpha} \mapsto \FockS{\frac{\alpha+p-1}{\sqrt{2p}}}, \qquad \varphi:\SC{i}{kp} \mapsto M_{1-k,i+1}
 \end{align}
 between simple modules of $\UHbar{2}$ and the $\Mp$ singlet \voa{} is a bijection of the sets of representatives of equivalence classes of simple modules up to isomorphisms
and  induces a morphism from the Grothendieck ring
of weight modules of $\UHbar{2}$ to the conjectured Grothendieck ring of $\Mp$ (conjectured in $\textup{\cite{CM1}}$ based on the conjectural Verlinde's formula).
\end{proposition}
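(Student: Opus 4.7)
The plan is to prove the two assertions of Proposition~\ref{correspondence} separately: first that $\varphi$ is a bijection on isomorphism classes of simples, then that its linear extension respects the (conjectured) ring structures.

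For the bijection, I would start by verifying that $\varphi$ lands in the set of simple $\Mp$-modules. For a typical $V_\alpha$ with $\alpha\in\ddot{\CC}$, rewrite $\frac{\alpha+p-1}{\sqrt{2p}}$ in the form $\alpha_{r,p}=-\tfrac{r-1}{2}\sqrt{2p}+\tfrac{p-1}{\sqrt{2p}}$ when $\alpha\in p\ZZ$, and check directly that it is a generic Fock charge (hence $\FockS{\cdot}$ is simple over $\Mp$) when $\alpha\in\CC\setminus\ZZ$; either way the image is a simple $\Mp$-module. For the atypicals $\SC{i}{kp}$ with $i\in\{0,\dots,p-2\}$ and $k\in\ZZ$, the parameters $(r,s)=(1-k,i+1)$ satisfy $1\le s\le p-1$, so $M_{1-k,\,i+1}$ is the socle of a reducible Fock module, a simple $\Mp$-module by construction. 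Injectivity then reduces to the fact that distinct values of $\alpha\in\ddot{\CC}$ give distinct Fock charges, and distinct $(i,k)$ give distinct pairs $(r,s)$. Surjectivity follows from the classification of simple $\Mp$-modules on the singlet side: every simple is either a typical Fock space $\FockS{\alpha_{r,p}}$ or a socle $M_{r,s}$ with $1\le s\le p-1$, both exhausted by the image.

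For the Grothendieck ring morphism, I would extend $\varphi$ linearly and verify multiplicativity on products of simples. On the quantum group side the tensor products of simple weight modules are the ones in \cite{CGP}: products of two typicals $V_\alpha\otimes V_\beta$ decompose as a direct sum of typicals when $\alpha+\beta\notin\ZZ$, and as a sum of typicals together with indecomposable $P_i$-summands otherwise; products involving atypicals $\SC{i}{kp}$ follow the $\sltwo$-like Clebsch--Gordan pattern shifted by $\mathbb{C}^H_{kp}$. Using Lemma~\ref{prop:SESresolutionofAtypicals}, every $V_\alpha$ and every $P_i\otimes\CC^H_{\ell p}$ can be written in the Grothendieck ring as a $\ZZ$-combination of simples. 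On the singlet side, the conjectural fusion rules of \cite{CM1} have the same combinatorial skeleton, with analogous Fock-module exact sequences expressing $F_{\alpha_{r,s}}$ and its reducible relatives in terms of the simples $M_{r',s'}$. The heart of the verification is to show that under $\alpha\mapsto\tfrac{\alpha+p-1}{\sqrt{2p}}$ and $(i,k)\mapsto(1-k,i+1)$, the two decompositions transform into one another term by term. Matching conformal weights (the substitution lines up $L_0$-eigenvalues) and comparing highest-weight multiplicities suffices, since simples on both sides are uniquely determined by the characters of their top spaces.

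The hard part is the Grothendieck ring step, and there are two sources of difficulty. First, the singlet fusion ring is itself only conjectural, so the assertion is really that the pushforward of the (rigorous) quantum group tensor structure is consistent with the conjectured rules, and one must be careful not to use the singlet ring structure to prove itself. Second, in the mixed case $\alpha+\beta\in\ZZ$, the relevant decomposition on the quantum side involves the non-split projectives $P_i\otimes\CC^H_{\ell p}$, so one must resolve them via Lemma~\ref{prop:SESresolutionofAtypicals} and simultaneously resolve the corresponding reducible $\FockC{\alpha_{r,s}}$ on the singlet side, then check that the resulting alternating sums of simples match exactly; this bookkeeping in two non-semisimple settings at once is where the real work lies.
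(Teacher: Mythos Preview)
The paper does not actually prove Proposition~\ref{correspondence}; it is stated there with the attribution ``the following statement proven in \cite{CMR}'' and no argument is given in the body of this paper. So there is no in-paper proof to compare your proposal against.

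That said, your outline is the natural strategy and is consistent with how such a result would be established: the bijection step is a straightforward bookkeeping check using the classification of simple $\Mp$-modules (typical Fock modules and socles $M_{r,s}$) against the classification of simple $\UHbar{2}$-weight modules from \cite{CGP}, and the ring-morphism step amounts to transporting the known tensor product decompositions of \cite{CGP} (together with the resolutions of Lemma~\ref{prop:SESresolutionofAtypicals}) across $\varphi$ and matching them against the conjectural singlet fusion rules of \cite{CM1}. Your caveat about the target ring being only conjectural is exactly the point: the content of the proposition is that the quantum-group Grothendieck ring maps \emph{into} the ring defined by those conjectural rules, not that the latter are proven. One small sharpening: rather than matching ``conformal weights'' and ``top-space characters'' on the singlet side (which would require analytic input), the cleanest route is purely combinatorial---both sides are governed by the same $\sltwo$-type Clebsch--Gordan pattern indexed by $(r,s)$ or $(i,k)$, and the short exact sequences on the two sides correspond formally under the index change $(i,k)\leftrightarrow(1-k,i+1)$, $\alpha\leftrightarrow\tfrac{\alpha+p-1}{\sqrt{2p}}$. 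That is essentially what is carried out in \cite{CMR}.
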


A precise conjecture on the connection between the module categories of $\Mp$ and $\UHbar{2}$ is given in \cite[Subsection 3.1]{CMR} and \cite[Conjecture 5.8]{CGR}.

\subsection{Simple currents and algebra objects}\label{algebracurrents}

Conformal extensions of vertex operator algebras can be studied efficiently via the notion of (super)commutative algebra objects in vertex tensor categories. The representation category for the extended vertex operator (super)algebra then corresponds to the category of local modules for the corresponding (super)commutative algebra object. This program has been developed in \cite{KO, HKL, CKM}.
It works particularly well for simple current extensions \cite{CKL, CKLR} and our present problem falls precisely  into this framework.

\begin{definition}
A simple current is a simple object which is invertible with respect to the tensor product. Objects which are their own inverse are called self-dual.
\end{definition}

\begin{definition} \label{algebradef}
A commutative associative unital algebra (or just algebra, for short) in a braided monoidal category $\mathcal{C}$ is an object $A$ in $\mathcal{C}$ with multiplication morphism $\mu:A \otimes A \rightarrow A$ and unit $\iota: \mathds{1} \rightarrow A$ with the following assumptions:
\begin{itemize}
\item Associativity: $\mu \circ (\mu \otimes \Id_A)=\mu \circ ( \Id_A \otimes \mu) \circ a_{A,A,A}$ where $a_{A,A,A}: (A \otimes A) \otimes A \to A \otimes (A \otimes A)$ is the associativity isomorphism.
\item Unit: $\mu \circ (\iota \otimes \Id_A) \circ l_A^{-1}=\Id_A$ where $l_A:\mathds{1} \otimes A \rightarrow A$ is the left unit isomorphism.
\item Commutativity: $\mu\circ c_{A,A} = \mu$ where $c_{A,A}$ is the braiding.
\item (Optional assumption) Haploid: $\dim(\Hom_{\mathcal{C}}(\mathds{1},A))=1$.
\end{itemize}

$\rep A$ is the category whose objects are given by pairs $(V,\mu_V)$ where $V \in \text{Obj}(\mathcal{C})$ and $\mu_V \in \text{Hom}(A \otimes V,V)$ satisfying the following assumptions:
\begin{itemize}
\item $\mu_V \circ (\Id_A \otimes \mu_V)=\mu_V \circ ( \mu \otimes \Id_A)\circ a_{A,A,V}^{-1}$,
\item $\mu_V \circ (\iota \otimes \Id_V) \circ l_V^{-1}=\Id_V$.
\end{itemize}

Define $\rep^0A$ to be the full subcategory of $\rep A$ whose objects $(V,\mu_V)$ satisfy
\[ \mu_V \circ M_{A,V}=\mu_V,\]
where $M_{A,V}=c_{V,A} \circ c_{A,V}$.\\
\end{definition}

\begin{definition}\cite[Section 2]{CGR}

Let $\mathcal C$ be a tensor category with tensor identity $ \mathds{1}$ and an algebra object $A$. A morphism $\omega: A \otimes A \rightarrow  \mathds{1}$ is called a non-degenerate invariant pairing if 
\begin{enumerate}
\item The morphisms $ A \otimes (A \otimes A) \xrightarrow{\Id \otimes \mu} A \otimes A \xrightarrow{\omega}  \mathds{1}$ and $ A \otimes (A \otimes A) \xrightarrow{a_{A,A,A}^{-1}} (A \otimes A) \otimes A \xrightarrow{\mu \otimes \Id} A \otimes A \xrightarrow{\omega} \mathds{1}$ coincide. (Invariance)
\item For any object $V$ and morphism $f:V\rightarrow A$ the equalities $\omega \circ (f \otimes \Id_A) =0$ or $\omega \circ (\Id_A \otimes f)=0$ both imply that $f=0$. (non-degeneracy)
\end{enumerate}
\end{definition}

The notion of simplicity of an extended \voa{} corresponds precisely to the corresponding algebra object having a non-degenerate invariant pairing. This is explained in the proof of Corollary 5.9 of \cite{CGR}.

\begin{definition}\label{inductionfunctor}
Let $\mathcal{C}$ be a category with algebra object $A$. The induction functor $\ind: \mathcal{C} \rightarrow \Rep A$ is defined by $\ind (V)=(A \otimes V, \mu_{\ind (V)})$ where $\mu_{\ind (V)}= (\mu \otimes \Id_V)\circ a_{A,A,V}^{-1}$ (here $\mu$ is the product on $A$) and for any morphism $f$, $\ind(f)=\Id_A \otimes f$.\\
\end{definition}

We also have a forgetful restriction functor
$\mathcal{G}:\mathcal{\rep\, A}\rightarrow \mathcal{C}$ that sends an object $(X,\mu_X)$ to $X$. The induction and restriction functors satisfy Frobenius reciprocity:
\begin{align}
\Hom_{\mathcal{C}}(X,\mathcal{G}(Y)) \cong
\Hom_{\rep A}(\mathcal{F}(X),Y) 
\end{align}
for $X\in\mathcal{C}$ and $Y\in\rep A$.

It was shown in \cite[Theorem 3.12]{CKL}
 that in the module category of a \voa{} $V$ satisfying certain assumptions, certain (super)-algebra objects built from simple currents have (super)-\voa{} structure and give a (super)-\voa{} extension $V_e$ of $V$. If the module category of $V$ is sufficiently nice, then the category $\Rep^{0}V_e$ (with $V_e$ viewed as a categorical (super)-algebra object) is equivalent as a braided tensor category to the category of generalized modules of $V_e$ (with $V_e$ now viewed as a (super)-\voa).  \\

\section{The $\Bp$-algebra as a simple current extension} \label{SectionBp}

Let $\C:=\HR \boxtimes \UHbar{2}\text{-Mod}$ be the Deligne product of $\HR$ (see subsection \ref{Heisenberg}) and $\UHbar{2}$-Mod. The tensor product in this category is given by
\[ (X \boxtimes Y) \otimes (X' \boxtimes Y') = (X \otimes X') \boxtimes (Y \otimes Y') \]
and the braiding, twist, and rigidity morphisms are given by the product of the corresponding morphisms in $\UHbar{2}$-Mod and $\HR$. Notice that for any $\lambda \in i\mathbb{R}$, we have
$$ (\FC{\lambda}{p}) \otimes (\FC{-\lambda}{-p})=\FC{0}{0}. $$
Hence, $\FC{\lambda}{p}$ is a simple current. In what follows, we let $\lambda_p$ satisfy 
\begin{align}
\lambda_p^2=-\dfrac{p}{2}.
\end{align}
We can define an object $\Ap$ of the extended category $\mathcal{C}_{\oplus}$ (which allows infinite direct sums while retaining sufficient structure, see \cite{AR}) by
\begin{align}
\Ap := \bigoplus_{k \in \mathbb{Z}} (\FC{\lambda_p}{p})^{\otimes k} \cong \bigoplus_{k \in \mathbb{Z}} \FC{k\lambda_p}{kp}. 
\end{align}
\begin{remark}
The $\Bp$-algebra is a \voa{} extension of $\Heis\otimes\Mp$ and it decomposes as  $\Heis\otimes\Mp$-module as
\begin{align}
\Bp  \cong \bigoplus_{k \in \mathbb{Z}} \FockC{kp} \boxtimes M_{1-k,1}
\end{align}
so that under the correspondence of Proposition \ref{correspondence} the $\Bp$-algebra is the image of $\Ap$
\begin{align}
\Bp = \bigoplus_{k \in \mathbb{Z}} \FockC{kp} \boxtimes \varphi(\mathbb C^H_{kp}).
\end{align}
\end{remark}

The following is a special case of \cite[Proposition 2.15]{CGR}. Note that the proof of that Proposition is in Appendix A of that paper. 
\begin{proposition}\label{Apalgebra}\textup{\cite[Proposition 2.15]{CGR}}
$\Ap$ can be given a structure of a commutative algebra object in $\mathcal{C}_{\oplus}$ with non-degenerate invariant pairing. This structure is unique up to isomorphism.
\end{proposition}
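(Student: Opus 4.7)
The proposition is a direct specialization of \cite[Proposition 2.15]{CGR}. The plan is to exhibit $\Ap$ as the $\mathbb Z$-graded simple current extension generated by $J:=\FC{\lambda_p}{p}$ and to verify the hypotheses required to invoke that general result; the main one is that the self-braiding $c_{J,J}$ is the identity on $J\otimes J$. Since the braiding on the Deligne product is the tensor product of the Heisenberg and quantum-group braidings, the Heisenberg contribution on $\FockH{\lambda_p}\otimes\FockH{\lambda_p}$ is the scalar $e^{\pi i\lambda_p^2}=e^{-\pi i p/2}$ coming from \eqref{HRbraidtwist}, while on $\CC^H_p\otimes\CC^H_p$ the $R$-matrix \eqref{Uqbraiding} reduces to its diagonal part $q^{H\otimes H/2}$ (because $E$ and $F$ annihilate one-dimensional modules) and gives $q^{p^2/2}=e^{\pi i p/2}$. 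These scalars multiply to $1$, which is precisely the arithmetic motivating the choice $\lambda_p^2=-p/2$; by functoriality, the analogous cancellation then holds on every graded piece $J^{\otimes k}\otimes J^{\otimes \ell}\cong\FC{k\lambda_p}{kp}\otimes\FC{\ell\lambda_p}{\ell p}$, giving a trivial braiding throughout.

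Given this locality, the algebra data is assembled in a standard way. The multiplication $\mu$ on the $(k,\ell)$-piece is a nonzero scalar multiple of the canonical isomorphism $J^{\otimes k}\otimes J^{\otimes \ell}\xrightarrow{\sim}J^{\otimes(k+\ell)}$; the coherence of these choices with the associativity constraint of $\mathcal C_\oplus$ is measured by a $2$-cocycle in $Z^2(\mathbb Z,\CC^\times)$, which must be a coboundary since $H^2(\mathbb Z,\CC^\times)=0$, so an appropriate rescaling yields an associative and (by the previous paragraph) commutative $\mu$ with evident unit $\iota\colon\mathds 1=J^{\otimes 0}\hookrightarrow\Ap$. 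The non-degenerate invariant pairing $\omega\colon\Ap\otimes\Ap\to\mathds 1$ is the direct sum of the rigid evaluation maps on the complementary components $J^{\otimes k}\otimes J^{\otimes -k}\to\mathds 1$ and zero on non-complementary components; invariance is a formal consequence of associativity of $\mu$ combined with the duality axioms of $\mathcal C$, and non-degeneracy is immediate from each $J^{\otimes k}$ being invertible.

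For uniqueness, any second such structure $(\mu',\iota')$ differs from $(\mu,\iota)$ on the $(k,\ell)$-piece by scalars $\eta_{k,\ell}\in\CC^\times$; compatibility with associativity forces $\{\eta_{k,\ell}\}\in Z^2(\mathbb Z,\CC^\times)$, and the vanishing of $H^2(\mathbb Z,\CC^\times)$ produces a $1$-cochain $\{\xi_k\}$ whose diagonal assembly $\bigoplus_k\xi_k\,\Id_{J^{\otimes k}}$ is an isomorphism of algebra objects intertwining the two structures. The main obstacle in the entire argument is the self-braiding cancellation in step one: it is the only place where the specific arithmetic of $\lambda_p^2=-p/2$ has to match up with the $R$-matrix data on $\CC^H_p$, and after that verification the remainder of the proof is the formal simple-current-extension machinery of \cite{CKL,CKLR,CGR}.
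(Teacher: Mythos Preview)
Your proposal is correct and follows essentially the same route as the paper. The paper simply invokes \cite[Proposition 2.15]{CGR}, and your sketch is precisely the argument underlying that result in this instance: verify the self-braiding on $J=\FC{\lambda_p}{p}$ is trivial via the cancellation $e^{\pi i\lambda_p^2}\cdot q^{p^2/2}=1$, then build $\mu$ from canonical isomorphisms on the one-dimensional graded pieces with the $2$-cocycle/coboundary formalism over $\ZZ$ handling associativity and uniqueness. Two minor quibbles: the phrase ``by functoriality'' for the higher graded pieces is loose---what you really use is the direct computation $e^{\pi i(k\lambda_p)(\ell\lambda_p)}\cdot q^{(kp)(\ell p)/2}=1$, or the hexagon axioms; and strictly speaking the obstruction to associativity in a general braided category is a $3$-cocycle on $\ZZ$, which here is automatically trivial because the underlying associators in both $\HR$ and $\UHbar{2}$-Mod are the standard vector-space ones, after which the freedom is indeed a $2$-cocycle as you say.
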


We now give some criteria on analyzing certain objects in the category $\rep^0 A$ associated to an algebra object $A$.

\begin{lemma}\label{projectivesinduced}
	If $P$ is projective in $\mathcal{C}$, then $\mathcal{F}(P)$ is projective in $\rep A$.
\end{lemma}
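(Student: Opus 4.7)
The proof strategy is the standard adjunction argument: since induction is left adjoint to restriction, induction preserves projectives whenever restriction preserves epimorphisms.

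Explicitly, the plan is to exploit the Frobenius reciprocity isomorphism recalled just above the lemma, namely
\begin{equation*}
\Hom_{\rep A}(\mathcal{F}(P), Y) \;\cong\; \Hom_{\mathcal{C}}(P, \mathcal{G}(Y))
\end{equation*}
natural in $Y \in \rep A$. To conclude that $\mathcal{F}(P)$ is projective in $\rep A$, I need to show that the functor $\Hom_{\rep A}(\mathcal{F}(P), -)$ carries epimorphisms to surjections. By the natural isomorphism above, this is equivalent to showing that $\Hom_{\mathcal{C}}(P, \mathcal{G}(-))$ does so.

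First I would observe that the forgetful functor $\mathcal{G}:\rep A \to \mathcal{C}$ is exact: a morphism $f:(X,\mu_X) \to (Y,\mu_Y)$ in $\rep A$ is an epimorphism precisely when the underlying morphism $\mathcal{G}(f):X \to Y$ is epic in $\mathcal{C}$, since kernels, cokernels, and images in $\rep A$ are computed on underlying objects (the $A$-action descends uniquely). Thus $\mathcal{G}$ preserves epimorphisms. Next, since $P$ is projective in $\mathcal{C}$, the functor $\Hom_{\mathcal{C}}(P,-)$ sends epimorphisms in $\mathcal{C}$ to surjections of abelian groups. Composing these two facts, $\Hom_{\mathcal{C}}(P,\mathcal{G}(-))$ preserves epimorphisms.

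Finally, naturality of the Frobenius reciprocity isomorphism in $Y$ transfers this property back to $\Hom_{\rep A}(\mathcal{F}(P),-)$, yielding the projectivity of $\mathcal{F}(P)$. There is no real obstacle here; the only subtlety worth stating carefully is the exactness of $\mathcal{G}$, which rests on the fact that subobjects, quotients, and (co)kernels in $\rep A$ are inherited from those in $\mathcal{C}$.
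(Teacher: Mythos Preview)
Your proof is correct and follows essentially the same route as the paper: both use Frobenius reciprocity to identify $\Hom_{\rep A}(\mathcal{F}(P),-)$ with $\Hom_{\mathcal{C}}(P,-)\circ\mathcal{G}$, invoke exactness of the restriction functor $\mathcal{G}$, and then use projectivity of $P$ to conclude. Your version is slightly more explicit about naturality and about why $\mathcal{G}$ is exact, but the argument is the same.
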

\begin{proof}
	Note that
	$\text{Hom}_{\rep A}(\mathcal{F}(P),\bullet)=
	\text{Hom}_{\mathcal{C}}(P,\bullet)\circ\mathcal{G}$
	as functors, due to the Frobenius reciprocity of $\mathcal{F}$ and $\mathcal{G}$.
	Our forgetful restriction functor $\mathcal{G}$ is exact.
	 Also, $\text{Hom}_{\mathcal{C}}(P,\bullet)$ is exact since $P$ is projective.
	Therefore the functor $\text{Hom}_{\rep A}(\mathcal{F}(P),\bullet)$ 
	is exact, which proves that $\mathcal{F}(P)$ is projective.

\end{proof}

\begin{lemma}\label{simplesinduced}
	If $W\cong_{\mathcal{C}}\bigoplus\limits_\nu {F_{\nu}}\boxtimes X_{\nu}$ is a simple object in $\rep A$ then $W$ is isomorphic to the induction of a simple object.
\end{lemma}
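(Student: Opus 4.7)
The plan is to produce, via Frobenius reciprocity, a simple object $X$ of $\mathcal{C}$ whose induction $\mathcal{F}(X)$ surjects onto $W$, and then to argue that this induction is already simple in $\rep\Ap$, forcing the surjection to be an isomorphism.

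First, inside the given decomposition $\mathcal{G}(W)\cong\bigoplus_{\nu}F_{\nu}\boxtimes X_{\nu}$ I would fix a nonzero factor $F_{\nu_0}\boxtimes X_{\nu_0}$ and then choose a simple $\UHbar{2}$-submodule $S\subseteq X_{\nu_0}$. Then $X:=F_{\nu_0}\boxtimes S$ is simple in $\mathcal{C}$ (both tensor factors are simple) and admits a nonzero inclusion into $\mathcal{G}(W)$. Frobenius reciprocity converts this inclusion into a nonzero morphism $\phi:\mathcal{F}(X)\to W$ in $\rep\Ap$, and simplicity of $W$ forces $\phi$ to be surjective.

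The heart of the argument is showing that $\mathcal{F}(X)=\Ap\otimes X$ is itself simple in $\rep\Ap$. Its restriction to $\mathcal{C}$ is
\[
\mathcal{G}\mathcal{F}(X)\;\cong\;\bigoplus_{k\in\ZZ}F_{\nu_0+k\lambda_p}\boxtimes\bigl(\mathbb{C}^H_{kp}\otimes S\bigr),
\]
and each summand is simple in $\mathcal{C}$ because $\mathbb{C}^H_{kp}$ is a simple current in $\UHbar{2}\text{-Mod}$, so tensoring preserves simplicity of $S$. Since $\lambda_p\neq 0$, the Heisenberg gradings $\nu_0+k\lambda_p$ are pairwise distinct for different $k\in\ZZ$, so these simple summands are pairwise non-isomorphic in $\mathcal{C}$. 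Consequently every $\mathcal{C}$-subobject of $\mathcal{G}\mathcal{F}(X)$ is the direct sum of some subset of these summands.

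Finally, suppose $Y\subseteq\mathcal{F}(X)$ is any nonzero sub-$\Ap$-module and that $Y$ contains the $k_0$-th summand. The restriction of the $\Ap$-action to the simple-current summand $F_{\lambda_p}\boxtimes\mathbb{C}^H_p\subseteq\Ap$ yields a morphism from the $k_0$-th simple summand of $\mathcal{F}(X)$ to the $(k_0{+}1)$-th one; this morphism is nonzero (hence a $\mathcal{C}$-isomorphism) because of the nonvanishing of the structure constants from Proposition \ref{Apalgebra}. Iterating in both directions, using also the inverse simple-current summand $F_{-\lambda_p}\boxtimes\mathbb{C}^H_{-p}$ of $\Ap$, every $k$-th summand lies in $Y$, so $Y=\mathcal{F}(X)$. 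Thus $\mathcal{F}(X)$ is simple in $\rep\Ap$, so $\phi$ is an isomorphism between simple objects and $W\cong\mathcal{F}(X)$ as required. The only delicate point is the nonvanishing of the simple-current action between consecutive summands, but this is guaranteed by the explicit algebra structure of $\Ap$.
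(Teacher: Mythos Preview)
Your argument is correct and follows the same overall strategy as the paper: pick a simple $\mathcal{C}$-submodule of one of the $F_{\nu_0}\boxtimes X_{\nu_0}$, use Frobenius reciprocity to obtain a nonzero $\rep\Ap$-map from its induction to $W$, and then show the induction is already simple. The paper dispatches this last step by observing that $\mathcal{G}\mathcal{F}(F_{\nu_0}\boxtimes S_{\nu_0})=\bigoplus_k F_{\nu_0+k\lambda_p}\boxtimes(\mathbb{C}^H_{kp}\otimes S_{\nu_0})$ is a direct sum of mutually inequivalent simples and then invoking \cite[Theorem 4.4]{CKM}. You instead give a direct, self-contained argument: any nonzero $\Ap$-submodule must be a direct sum of some subset of these simples, and the action of the simple-current summands $F_{\pm\lambda_p}\boxtimes\mathbb{C}^H_{\pm p}$ of $\Ap$ carries each summand isomorphically onto its neighbour (via the nonzero structure constants of Proposition~\ref{Apalgebra}), forcing the submodule to be everything. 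This is effectively a hands-on reproof of the relevant special case of \cite[Theorem 4.4]{CKM}; it is slightly longer but has the advantage of being self-contained and making the role of the simple-current structure of $\Ap$ explicit.
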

\begin{proof}
Pick and fix any $\nu_0$ amongst the $\nu$ appearing above, and pick a non-zero morphism 
$f:S_{\nu_0}\rightarrow X_{\nu_0}$ where $S_{\nu_0}$ is a simple module.
Consider
$g:F_{\nu_0}\boxtimes S_{\nu_0}\xrightarrow{\Id\otimes f} F_{\nu_0}\boxtimes X_{\nu_0}\hookrightarrow \bigoplus\limits_\nu {F_{\nu}}\boxtimes X_{\nu}$ which is a non-zero morphism as well.
By Frobenius reciprocity, we obtain a non-zero morphism
$h:\mathcal{F}(F_{\nu_0}\boxtimes S_{\nu_0})\rightarrow W$.\\

Now, note that $\mathcal{F}(F_{\nu_0}\boxtimes S_{\nu_0})$
as an object of $\mathcal{C}$ decomposes as
$\bigoplus\limits_{k}(F_{k\lambda+\nu_0})\boxtimes (\mathbb{C}_{kp}^H\otimes S_{\nu_0})$,
where the summands are all (mutually inequivalent) simple objects due to the simple current property of $F_{k\lambda}\boxtimes\mathbb{C}_{kp}^H$.
Now, \cite[Theorem 4.4]{CKM} applies and tells us that 
$\mathcal{F}(F_{\nu_0}\boxtimes S_{\nu_0})$ is infact a simple $\rep A$ object. Since $W$ is simple with a non-zero morphism $\mathcal{F}(F_{\nu_0}\boxtimes S_{\nu_0})\rightarrow W$,
$W$ is isomorphic to the induced object $\mathcal{F}(F_{\nu_0}\boxtimes S_{\nu_0})$.

\end{proof}

\begin{theorem} \label{liftingcondition}
The following list of objects in $\C$ induce to $\Rep^{0}(\Ap)$ by the induction functor $\ind:\C \to \Rep(\Ap)$:
\begin{enumerate} 
\item $\FV{\gamma}{\alpha}$ induces to $\Rep^{0}(\Ap)$ if and only if $\alpha+p-1+2\lambda_p \gamma  \in 2\ZZ$; 
\item $\FSC{\gamma}{i}{\ell p}$  induces to $\Rep^{0}(\Ap)$ if and only if $i+\ell p+ 2\lambda_p \gamma  \in 2\mathbb{Z}$;
\item $\FPC{\gamma}{i}{\ell p}$  induces to $\Rep^{0}(\Ap)$ if and only if $i+\ell p  + 2\lambda_p \gamma  \in 2\mathbb{Z}$.
\end{enumerate}
\end{theorem}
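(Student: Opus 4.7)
The plan is to apply the standard locality criterion: if $A$ is a commutative algebra object in a braided tensor category, then for any $X$, the induced object $\ind(X)=A\otimes X$ lies in $\Rep^{0}(A)$ if and only if the monodromy $M_{A,X}=c_{X,A}\circ c_{A,X}$ equals $\Id_{A\otimes X}$ (this is the formulation used in \cite{CKM} and already invoked in \cite{CGR} for the same algebra $\Ap$). Since $\Ap=\bigoplus_{k\in\ZZ} J^{\otimes k}$ is generated by the simple current $J:=\FC{\lambda_p}{p}$ and since monodromy is natural and multiplicative in the first slot, it suffices to check the condition on $J\otimes X$. So first I would reduce the problem to showing $M_{J,X}=\Id$.

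Second, I would unpack the braiding on $\C=\HR\boxtimes\UHbar{2}\text{-Mod}$ as the tensor product of the braidings on each factor. For a weight vector $v\in X$ sitting in $\FockH{\gamma}\boxtimes V$ and of $H$-weight $\mu$, the Heisenberg part yields the scalar $e^{2\pi i\lambda_p\gamma}$ by formula \eqref{HRbraidtwist}. For the $\UHbar{2}$-part, the key observation is that in the $R$-matrix \eqref{Uqbraiding}, all terms $E^n\otimes F^n$ with $n\geq 1$ vanish on $\CC_p^H$ because both $E$ and $F$ act by zero on $\CC_p^H$. Hence only $q^{H\otimes H/2}$ contributes in both braidings, and the monodromy of $\CC_p^H$ with a weight-$\mu$ vector is simply $q^{p\mu}=e^{\pi i\mu}$. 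Multiplying the two factors, the monodromy $M_{J,X}$ acts on a weight-$\mu$ vector of $X$ with charge $\gamma$ by the scalar
\[
e^{\pi i(2\lambda_p\gamma+\mu)}.
\]

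Third, I would observe that this scalar equals $1$ on every weight vector of $X$ if and only if $2\lambda_p\gamma+\mu\in 2\ZZ$ for every $H$-weight $\mu$ appearing in $X$. Since $H$-weights in any given indecomposable shift by even integers, the condition only depends on the parity class of $\mu$. Case by case:
\begin{itemize}
\item For $V_\alpha$, the weights are $\alpha+p-1-2j$, $j=0,\dots,p-1$, so $\mu\equiv \alpha+p-1\pmod{2}$, giving the condition $\alpha+p-1+2\lambda_p\gamma\in 2\ZZ$.
\item For $S_i\otimes\CC_{\ell p}^H$, the weights are $i-2j+\ell p$, so $\mu\equiv i+\ell p\pmod{2}$, giving $i+\ell p+2\lambda_p\gamma\in 2\ZZ$.
\item For $P_i\otimes\CC_{\ell p}^H$, the $H$-weights of $P_i$ all lie in $i+2\ZZ$ (as recalled in the background, $P_i$ is generated from a dominant weight vector of weight $i$ under $E,F$ which shift weight by $\pm 2$), so again $\mu\equiv i+\ell p\pmod{2}$.
\end{itemize}

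The only genuinely delicate step is the parity statement for the projective $P_i\otimes\CC_{\ell p}^H$, and this I would support by citing the explicit weight description in \cite[Section 6]{CGP}. The rest is essentially a direct computation of the $R$-matrix action on vectors annihilated by $E$ and $F$, combined with the hexagon argument showing that monodromy with $J$ implies monodromy with every summand $J^{\otimes k}$ of $\Ap$.
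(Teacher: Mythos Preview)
Your proposal is correct and follows essentially the same approach as the paper: reduce locality to triviality of monodromy with the generating simple current $J=\FC{\lambda_p}{p}$, observe that $E,F$ annihilate $\CC_p^H$ so only the Cartan part $q^{H\otimes H/2}$ of the $R$-matrix survives, and read off the scalar on weight vectors. The only minor difference is that the paper argues the monodromy is a global scalar by invoking the structure of the endomorphism rings (one-dimensional for $V_\alpha$ and $S_i\otimes\CC_{\ell p}^H$; for $P_i\otimes\CC_{\ell p}^H$ two-dimensional with a nilpotent piece that cannot appear since $q^{H\otimes H}$ is semisimple), whereas you bypass this by computing the eigenvalue $e^{\pi i\mu}$ on every weight space and noting that all $H$-weights in each module lie in a single coset of $2\ZZ$. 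Your route is slightly more elementary for $P_i$ since it avoids appealing to $\End(P_i)$, but the two arguments are otherwise identical.
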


\begin{proof}
Recall from subsection \ref{algebracurrents} that given any $\FockH{\gamma} \boxtimes X \in \mathcal{C}^{\oplus}$, $\ind(\FockH{\gamma} \boxtimes X) \in \Rep^{0} \Ap$ iff 
\[\mu_{\ind(\FockH{\gamma} \boxtimes X)} \circ M_{\Ap,\FockH{\gamma} \boxtimes X}= \mu_{\ind(\FockH{\gamma} \boxtimes X)}\]
where $\mu_{\ind(\FockH{\gamma} \boxtimes X)} = (\mu \otimes \Id_{\FockH{\gamma} \boxtimes X}) \circ a^{-1}_{\Ap,\Ap,\FockH{\gamma} \boxtimes X}$ and $M_{A,B}=c_{B,A} \circ c_{A,B}$ is the monodromy. By Proposition \ref{Apalgebra} we can, without loss of generality, assume that $\mu(1_u \otimes 1_v)=1_{u+v}$ in which case the above equation holds iff $M_{\Ap,\FockH{\gamma} \boxtimes X}=\Id$, but by Theorem 2.11 in \cite{CKL} it is enough to check that $M_{\FC{\lambda_p}{p},\FockH{\gamma}\boxtimes X}=\Id$. Note that we have 
\begin{align}
M_{\FC{\lambda_p}{p},\FockH{\gamma} \boxtimes X}= M_{\FockH{\lambda_p}, \FockH{\gamma}} \boxtimes M_{\CC_p^H, X}
\end{align}
where $M_{\FockH{\lambda_p} ,\FockH{\gamma}}=q^{2p \lambda_p \gamma}\Id$ by \eqref{HRbraidtwist}. Recall that the braiding on $\UHbar{2}$-Mod is given by $\tau \circ R$ where $\tau$ is the usual flip map and 
\begin{align}
R=q^{H \otimes H/2} \sum\limits_{n=0}^{p-1}\frac{\{1\}^{2n}}{\{n\}!}q^{n(n-1)/2}E^n \otimes F^n.
\end{align}
Since the generating vector $v_p \in \CC_p^H$ satisfies $Ev_p=Fv_p=0$, the braiding acts as $\tau \circ q^{H \otimes H/2}$ and hence the monodromy $M_{\CC_p^H, X}$  acts as $q^{H\otimes H}$ on $\CC_p^H \otimes X$. The endomorphism rings of $V_{\alpha}$ and $S_i \otimes \CC_{\ell p}^H$ are one dimensional, so the monodromies $M_{\CC_p^H,V_\alpha}$ and $M_{\CC_p^H, S_i \otimes \CC_{\ell p}^H}$  must act as scalars on $\CC_p^H \otimes  V_{\alpha} $ and $\CC_p^H \otimes (S_i \otimes \CC_{\ell p}^H)$, respectively. It follows by direct computation on the usual generators of these modules that
\begin{align}
M_{\CC_p^H, V_{\alpha}}&=q^{p(\alpha+p-1)}\Id,\\
M_{\CC_p^H, (S_i \otimes \CC_{\ell p}^H)}&=q^{p(i+\ell p)}\Id.
\end{align}
It follows that
\begin{align}
M_{\FC{\lambda_p}{p},\FockH{\gamma} \boxtimes V_{\alpha}}&=(-1)^{\alpha+p-1+2\lambda_p \gamma} \Id,\\
M_{\FC{\lambda_p}{p},\FockH{\gamma} \boxtimes (S_i \otimes \CC_{\ell p}^H)}&= (-1)^{i+\ell p + 2\lambda_p \gamma} \Id,
\end{align}
 and so $\FockH{\gamma} \boxtimes V_{\alpha}$ and $\FockH{\gamma} \boxtimes (S_i \otimes \CC_{\ell p}^H)$ lift to $\Rep^0 \Ap$ iff $\alpha+p-1+2\lambda_p \gamma,i+\ell p +2\lambda_p \gamma \in 2\ZZ$, respectively. The endomorphism ring of $P_i$ (and hence the endomorphism ring of $\CC_p^H \otimes (P_i \otimes \CC_{\ell p}^H)$ is two-dimensional spanned by the identity and a nilpotent operator (see Theorem 6.2 in \cite{CGP}), but $M_{\CC_p^H, P_i \otimes \CC_{\ell p}^H}$ has no nilpotent part since it acts by $q^{H \otimes H}$, so it must act by a scalar multiple of the identity. Acting on the vector $v_p \otimes (\sfw_i \otimes v_{\ell p}) $, it is easily seen that 
 \begin{align}
 M_{\CC_p^H, P_i \otimes \CC_{\ell p}^H}=q^{p(i+\ell p)}\Id
 \end{align}
 and so
\[ M_{\FC{\lambda_p}{p}, \FockH{\gamma} \boxtimes (P_i \otimes \CC_{\ell p}^H)}=(-1)^{i+\ell p+2\lambda_p \gamma}\Id .\]
Therefore, $\FockH{\gamma} \boxtimes (P_i \otimes \CC^H_{\ell p})$ lifts iff $i+\ell p +2\lambda_p \gamma \in 2\ZZ$.
\end{proof}

The induction functor $\ind:\mathcal{C}_{\oplus}  \rightarrow \Rep(\Ap)$ is a tensor functor
by Theorem 2.59 in \cite{CKM}
 and $\rep^0\Ap$ is a tensor subcategory of $\rep\Ap$, so for any objects $\ind(U), \ind(V) \in \Rep^{0}(\Ap)$, 
\begin{align}
 \ind(U) \otimes \ind(V) \cong \ind(U \otimes V). 
 \end{align}
$\Rep^{0}(\Ap)$ is rigid by Proposition 2.77 and Lemma 2.78 of \cite{CKM}, and by Proposition 2.67 of \cite{CKM}, the braiding $c_{-,-}^{\Ap}$ on $\Rep^{0}(\Ap)$ satisfies the relation
\begin{align}
\Id_{\Ap} \otimes c_{U,V} =\ind(c_{U,V})=g_{V,U} \circ c^{\Ap}_{\ind (U), \ind (V)} \circ f_{U,V}
\end{align} 
where $f_{U,V}:\ind(U\otimes V)\xrightarrow{\cong}\ind(U)\otimes\ind(V)$ and 
$g_{V,U}:\ind(V)\otimes\ind(U)\xrightarrow{\cong}\ind(V\otimes U)$
are isomorphisms defined in Theorem 2.59 of \cite{CKM}. 
Ultimately, we are interested in the scalars associated to the monodromy isomorphisms $c_{\ind(V),\ind(U)}^{\Ap} \circ c_{\ind(U),\ind(V)}^{\Ap}$, therefore for calculation purposes, we ignore the $f$ and $g$ isomorphisms and simply take
\begin{align}
c_{\ind(U),\ind(V)}^{\Ap}= \Id_{\Ap} \otimes c_{U,V}.
\end{align}  

We compute that $\theta_{(\FockH{\lambda} \boxtimes \mathbb{C}_p^H)^{\otimes k}}=\Id_{(\FockH{\lambda} \boxtimes \mathbb{C}_p^H)^{\otimes k}}$ when $p$ is odd, so $\theta_{\Ap}=\Id_{\Ap}$ for odd p, and hence by Corollary 2.82 and Theorem 2.89 in \cite{CKM}, we have 
\begin{align*}
\theta_{\ind(V)}&=\ind(\theta_V)=\Id_{\Ap} \otimes \theta_V\\
S_{\ind(U),\ind(V)}^{\hopflink}&= \varphi \circ (\Id_{\Ap} \otimes S_{U,V}^{\hopflink}) \circ \varphi^{-1}.
\end{align*}

Using Lemmas \ref{projectivesinduced} and \ref{simplesinduced}, and exactness of the Deligne product and induction functor together with Lemma \ref{prop:SESresolutionofAtypicals}, we easily obtain the following Proposition:

\begin{proposition} \label{RepApstructure}The simple modules in $\Rep^{0}(\Ap)$ are
\begin{align}
 \simTypiBpNAME{\gamma}{\alpha} &= \ind\left(\FV{\gamma}{\alpha}\right)& \text{with } \alpha &\in \CCdot \text{ and } \gamma \lambda_p + \frac{\alpha+p-1}{2} \in \ZZ, \\
 \simAtypiBpNAME{\gamma}{i}{\ell} &= \ind\left(\FSC{\gamma}{i}{\ell p}\right)&  \text{with } i &\in \{0,\dots,p-2\} \text{ and } \gamma \lambda_p + \frac{i+p\ell}{2} \in \ZZ.
\end{align}
We have families of indecomposable modules:
\begin{align}
\projVBpNAME{\gamma}{\alpha} &= \ind\left(\FV{\gamma}{\alpha}\right) &\text{with } \alpha &\notin \CCdot \text{ and } \gamma \lambda_p + \frac{\alpha+p-1}{2} \in \ZZ \; , \\
\projPBpNAME{\gamma}{i}{\ell} &= \ind\left(\FPC{\gamma}{i}{\ell p}\right) \ &\text{with } i &\in \{0,\dots,p-2\} \text{ and } \; \gamma \lambda_p + \frac{i + \ell p}{2} \in \ZZ,
\end{align}
with $\projPBpNAME{\gamma}{i}{\ell}$ being projective, and the above modules satisfy $E^V_{\gamma,\alpha} \cong E^V_{\gamma+k\lambda_p,\alpha+pk},$ $E^S_{\gamma,i,\ell} \cong E^S_{\gamma+k\lambda_p,i,\ell+k},$ $Q^V_{\gamma,\alpha} \cong Q^V_{\gamma+k\lambda_p,\alpha+pk}$, and $Q^P_{\gamma,i,\ell}  \cong Q^P_{\gamma+k\lambda_p,i,\ell+k}$ for all $k \in \mathbb{Z}$. The indecomposable modules admit the following Loewy diagrams:
\begin{center}
\begin{tikzpicture}[baseline=(current bounding box.center)]
\node (top) at (1,1) [] {$\simAtypiBpNAME{\gamma}{(p-1)-i}{\ell}$};
\node (bottom) at (1,-1) [] {$\simAtypiBpNAME{\gamma}{i-1}{\ell+1}$};
\draw[->, thick] (top) -- (bottom);
\node (label) at (-0.75,0) [circle, inner sep=2pt, color=black, fill=brown!25!] {$\projVBpNAME{\gamma}{i+\ell p}$};
\end{tikzpicture}
\hspace{3cm}
\begin{tikzpicture}[baseline=(current bounding box.center)]
\node (top) at (1,1) [] {$\simTypiBpNAME{\gamma}{i+1-p+\ell p}$};
\node (bottom) at (1,-1) [] {$\simTypiBpNAME{\gamma}{p-1-i+\ell p}$};
\draw[->, thick] (top) -- (bottom);
\node (label) at (-0.5,0) [circle, inner sep=2pt, color=black, fill=brown!25!] {$\projPBpNAME{\gamma}{p-1}{\ell}$};
\end{tikzpicture}
\\
\begin{tikzpicture}[baseline=(current bounding box.center)]
\node (tag) at (-6,0) [] {\em{For $i = 1, \dots , p-2 \,$:}};
\node (top) at (0,1.5) [] {$\simAtypiBpNAME{\gamma}{i}{\ell}$};
\node (middleI) at (-2.25,0) [] {$\simAtypiBpNAME{\gamma}{(p-2)-i}{\ell-1}$};
\node (middleII) at (2.25,0) [] {$\simAtypiBpNAME{\gamma}{(p-2)-i}{\ell+1}$};
\node (bottom) at (0,-1.5) [] {$\simAtypiBpNAME{\gamma}{i}{\ell}$};
\draw[->, thick] (top) -- (middleI);
\draw[->, thick] (top) -- (middleII);
\draw[->,thick] (middleI) -- (bottom); 
\draw[->, thick] (middleII) -- (bottom);
\node (label) at (0,0) [circle, inner sep=2pt, color=black, fill=brown!25!] {$\projPBpNAME{\gamma}{i}{\ell}$};
\end{tikzpicture} $\; .$
\hspace{3cm} $\text{}$
\end{center}

The category $\Rep^{0}(\Ap)$ is a rigid monoidal category with tensor product $\ind(U) \otimes \ind(V) \cong \ind(U \otimes V).$ $\Rep^{0}(\Ap)$ is also braided with braiding $c^{\Ap}_{\ind(U),\ind(V)}$ defined by
\[
c_{\ind(U),\ind(V)}^{\Ap}= \Id_{\Ap} \otimes c_{U,V},\\
\]
where $c_{U,V}$ is the braiding on $\mathcal{C}_{\oplus}$ given by the product of the braidings on $\mathcal{H}$-Mod and $\UHbar{2}$-Mod. If $p$ is odd, then $\Rep^{0}(\Ap)$ has twist $\theta_{\Ap}$ and Hopf links $S_{\ind(U),\ind(V)}^{\hopflink}$ given by
\begin{align}
\theta_{\ind(V)}&=\Id_{\Ap} \otimes \theta_V,\\
\sS_{\ind(U),\ind(V)}^{\hopflink}&= \sS_{U,V}^{\hopflink} 
\end{align}
where $\theta_V$ and $\sS_{U,V}^{\hopflink}$ are the twist and Hopf links respectively on $\mathcal{C}_{\oplus}$, and we are viewing the Hopf links as scalars.
\end{proposition}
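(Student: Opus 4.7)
The plan is to assemble the proposition from results already in hand: the lifting criteria of Theorem \ref{liftingcondition}, the induction lemmas \ref{projectivesinduced} and \ref{simplesinduced}, the resolutions of Lemma \ref{prop:SESresolutionofAtypicals}, and the general induction machinery of \cite{CKM, CKL}. The overall strategy is ``identify the inducing data in $\mathcal{C}$, lift it, then read off structure via the tensor functor $\mathcal{F}$.''

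First I would enumerate simples. Simples of $\HR\boxtimes\UHbar{2}\text{-Mod}$ are exactly $F_\gamma\boxtimes V_\alpha$ with $\alpha\in\ddot{\mathbb{C}}$, and $F_\gamma\boxtimes(S_i\otimes\mathbb{C}^H_{\ell p})$ with $i\in\{0,\dots,p-2\}$ and $\ell\in\mathbb{Z}$. By Lemma \ref{simplesinduced}, every simple of $\Rep^{0}(\mathcal{A}_p)$ is the induction of such an object, subject to the lifting restrictions already computed in Theorem \ref{liftingcondition}; this yields the lists of $E^V$ and $E^S$. The identifications $E^V_{\gamma,\alpha}\cong E^V_{\gamma+k\lambda_p,\alpha+pk}$ (and likewise for $E^S$, $Q^V$, $Q^P$) are an instance of the general fact that tensoring an object of $\mathcal{C}$ with any invertible summand $A_k:=F_{k\lambda_p}\boxtimes\mathbb{C}^H_{kp}$ of $\mathcal{A}_p$ produces the same induced module, because $\mathcal{A}_p\otimes A_k\cong\mathcal{A}_p$ as $\mathcal{A}_p$-modules (a reindexing of the direct sum).

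For the indecomposables, $P_i\otimes\mathbb{C}^H_{\ell p}$ is projective in $\UHbar{2}\text{-Mod}$, hence $F_\gamma\boxtimes(P_i\otimes\mathbb{C}^H_{\ell p})$ is projective in the Deligne product $\mathcal{C}$, and Lemma \ref{projectivesinduced} then gives projectivity of $Q^P_{\gamma,i,\ell}$ in $\Rep\mathcal{A}_p$; projectivity in the rigid subcategory $\Rep^{0}\mathcal{A}_p$ follows by the standard argument that the inclusion $\Rep^{0}\mathcal{A}_p\hookrightarrow\Rep\mathcal{A}_p$ admits a right adjoint. The lifting to $\Rep^{0}$ is the content of Theorem \ref{liftingcondition}. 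The Loewy diagrams are obtained by applying the exact induction functor to the two short exact sequences of Lemma \ref{prop:SESresolutionofAtypicals}. Non-splitness is preserved because the composition factors on the two sides remain non-isomorphic simples (by the simple-current identifications just discussed), while the original $V_\alpha$ and $P_i$ are non-split with the indicated radical filtrations, and the endomorphism rings of $Q^V$ and $Q^P$ are easily seen not to collapse after induction.

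The remaining monoidal data is extracted from \cite{CKM}: the isomorphism $\mathcal{F}(U)\otimes\mathcal{F}(V)\cong\mathcal{F}(U\otimes V)$ and rigidity follow from Theorem 2.59 and Proposition 2.77 there, and the braiding formula from Proposition 2.67. For the twist, I would compute directly that the twist on each summand $A_k$ of $\mathcal{A}_p$ equals $(-1)^{k(p-1)}$ using \eqref{HRbraidtwist} for the Heisenberg factor and \eqref{sl2twist} for the $\UHbar{2}$ factor; this is trivial exactly when $p$ is odd, so $\theta_{\mathcal{A}_p}=\Id$ in that case, and Corollary 2.82 and Theorem 2.89 of \cite{CKM} deliver the stated formulas for $\theta_{\mathcal{F}(V)}$ and $\sS^{\hopflink}_{\mathcal{F}(U),\mathcal{F}(V)}$. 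The main obstacle I expect is verifying that the induced sequences genuinely realize the stated Loewy structures (rather than splitting or producing some other filtration); this is handled by combining the simple-current shift identifications with the fact that the induction of a non-split extension of two non-isomorphic simples in $\mathcal{C}$ remains non-split once the composition factors are verified to be inequivalent in $\Rep^{0}\mathcal{A}_p$.
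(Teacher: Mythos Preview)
Your proposal is correct and follows essentially the same approach as the paper: the paper's proof is the single sentence ``Using Lemmas \ref{projectivesinduced} and \ref{simplesinduced}, and exactness of the Deligne product and induction functor together with Lemma \ref{prop:SESresolutionofAtypicals}, we easily obtain the following Proposition,'' together with the preceding paragraph invoking the same results from \cite{CKM} (Theorem 2.59, Propositions 2.67 and 2.77, Corollary 2.82, Theorem 2.89) and the twist computation on $\Ap$ for odd $p$. You have simply fleshed out these references with more explicit justification, including the non-splitness argument and the passage from $\Rep\Ap$ to $\Rep^0\Ap$ for projectivity, which the paper leaves implicit.
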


\section{Modular data for typical modules}\label{Sectiontypical}
In this section we compute and compare the modular data for typical and atypical modules in $\Bp$ and $\C$ through the correspondence $\varphi:\UHbar{2}\text{-Mod} \to \Mp\text{-Mod}, V_{\alpha} \mapsto \FockS{\frac{\alpha+p-1}{\sqrt{2p}}}, S_i \otimes \mathbb{C}^H_{\ell p} \mapsto M_{1-\ell,i+1}$ of Proposition \ref{correspondence} found in \cite{CMR}.

\subsection{Typical modules}
The typical modules in $\C$ take the form $\ind(\FV{\gamma}{\alpha})$ for $\alpha \in \CCdot:=(\CC\setminus \ZZ)\ \cup p\ZZ$, which is associated to the $\Bp$-Module 
\begin{align}
E_{\gamma,\alpha}:=\mathscr{F}\left(\FockC{\gamma} \boxtimes \FockS{\frac{\alpha+p -1}{\sqrt{2p}}}\right)
 \end{align}
through the above correspondence. Recall that $\FockC{\gamma}$ denotes the usual Fock space as a module of the Heisenberg VOA $\Heis$, and $\FockS{\frac{\alpha+p -1}{\sqrt{2p}}}$ as a module of the singlet VOA $\Mp$. Note that by Theorem \ref{liftingcondition}, $E_{\gamma,\alpha}\in\rep^0 \Bp$ iff $\gamma\lambda_p+\frac{\alpha}{2} + \frac{{p-1}}{2} \in\ZZ$ and that since $\gamma$ takes purely imaginary values, we are therefore forced to take {$\alpha\in(\RR\setminus\ZZ)\cup p\ZZ$}. On the quantum group side,  $\FockH{\lambda_p} \boxtimes \CC^H_{p}$ induces to the algebra object $\Ap$ under $\ind: \C \to \Ap\text{-Mod}$, and on the VOA side, $\FockC{\lambda_p}\otimes M_{0,1}$
	induces to $\Bp$. Therefore, we have 
\begin{align}
E_{\gamma,\alpha}\cong E_{\gamma+\lambda_p,\alpha+p}.
\end{align}
We re-parameterize the space of simple typicals with the following substitution:
\begin{equation}
\nu=\frac{2\alpha}{p},\quad \ell = \gamma\lambda_p+\frac{\alpha}{2}.\label{eqn:nuellnotation}
\end{equation}
Considering isomorphisms, our parameter space reduces to $\ell+\frac{p-1}{2}\in\ZZ, {\nu\in(-1,1]\backslash \frac{2}p\ZZ}$.
This re-parametrization is done to facilitate a comparison with \cite{CR2} and \cite{CR3} which correspond to $p=2$ and $p=3$ cases. The following $\sS^\chi$ matrix calculations will also work with $\nu\in(-1,1]$, and therefore, for what follows we work with this slightly larger parameter space.
By abuse of notation, we still let $E_{\nu,\ell}:=E_{\gamma,\alpha}$.  For characters, we use the following convention:
\begin{align}
\text{ch}[\FockC{\gamma}]=\frac{z^{-4\lambda_p\gamma/p} q^{\gamma^2/2}}{\eta(q)}
\end{align}
where $z=e^{2\pi i \zeta}$, $q=e^{2\pi i \tau}$. We need another variable $y=e^{2 \pi i \kappa}$ in the characters to have the $\sS$ matrix come out correctly.
We multiply the characters by $y^{-4/p}=e^{-8\pi i \kappa/p}$.
Under the $\sS$-transformations, we have:
\begin{align}
\sS: (\zeta,\tau,\kappa) \mapsto \left(\frac{\zeta}{\tau},-\frac{1}{\tau},\kappa-\frac{\zeta^2}{\tau}\right).
\end{align}

\begin{lemma} \label{lem:CHARatypical}
Using the parametrisation \eqref{eqn:nuellnotation} for the typical modules, their (super)characters are given by
\begin{align}
 \ch{E_{\nu,\ell}}{z;q} &= \frac{e^{-8\pi i \kappa/p}}{\eta(\tau)^2}\sum_{m\in\ZZ}e^{2\pi i \tau\ell^2/p}e^{\pi i (\nu -4\ell/p)m}\delta(2\zeta+\ell\tau-m) 
 \quad\quad p \,\,\mathrm{odd}\\
 \sch{E_{\nu,\ell}}{z;q} &= \frac{e^{-8\pi i \kappa/p}}{\eta(\tau)^2}\sum_{m\in\ZZ + (1/2)}e^{2\pi i \tau\ell^2/p}e^{\pi i (\nu -4\ell/p)m}\delta(2\zeta+\ell\tau-m) 
 \quad\quad p \,\,\mathrm{even}.
\end{align}
\end{lemma}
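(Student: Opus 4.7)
The plan is to compute the character of $E_{\gamma,\alpha}$ directly from its decomposition into $\Heis\otimes\Mp$-summands via induction, to combine the known Heisenberg and singlet Fock characters, and to convert the resulting sum over $k$ into a sum of delta distributions by Poisson summation.

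First I would unwind the induction. Since $\Ap = \bigoplus_k \FC{k\lambda_p}{kp}$ and $\CC^H_{kp}\otimes V_\alpha \cong V_{\alpha+kp}$, one has $\ind(\FockC{\gamma}\boxtimes V_\alpha) \cong \bigoplus_{k\in\ZZ} \FockC{\gamma+k\lambda_p}\boxtimes V_{\alpha+kp}$ as a $\Heis\otimes\UHbar{2}$-module; via the correspondence $\varphi$ of Proposition \ref{correspondence} this identifies $E_{\gamma,\alpha}$ with $\bigoplus_k \FockC{\gamma+k\lambda_p}\boxtimes \FockS{(\alpha+kp+p-1)/\sqrt{2p}}$ as a $\Heis\otimes\Mp$-module. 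The $k$-th summand contributes the product of the stipulated Heisenberg character $z^{-4\lambda_p(\gamma+k\lambda_p)/p}q^{(\gamma+k\lambda_p)^2/2}/\eta(\tau)$ and the typical singlet Fock character $q^{(\alpha+kp)^2/(4p)}/\eta(\tau)$, where the latter is obtained from the top conformal weight $h = \big((\alpha+kp)^2 - (p-1)^2\big)/(4p)$ together with the singlet central charge $c = 1 - 6(p-1)^2/p$.

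The central observation is that, using $\lambda_p^2 = -p/2$, the term $-k^2 p/4$ in the Heisenberg $q$-exponent exactly cancels the $+k^2 p/4$ from the singlet side. After this cancellation, the $k$-th summand simplifies to $z^{-4\lambda_p\gamma/p + 2k}\,q^{\gamma^2/2 + \alpha^2/(4p) + k\ell}/\eta(\tau)^2$ with $\ell = \lambda_p\gamma + \alpha/2$, and summing over $k$ reduces the $k$-dependence to $\sum_k z^{2k}q^{k\ell} = \sum_{k\in\ZZ} e^{2\pi i k(2\zeta + \ell\tau)}$, which by the distributional Poisson identity equals $\sum_{m\in\ZZ}\delta(2\zeta+\ell\tau - m)$.

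To match the stated form I would then use the delta support $2\zeta = m-\ell\tau$ to rewrite the $k$-independent prefactor: substituting into $z^{-4\lambda_p\gamma/p}$ produces the phase $e^{-4\pi i\lambda_p\gamma m/p}$, which equals $e^{\pi i(\nu - 4\ell/p)m}$ since $\nu - 4\ell/p = -4\lambda_p\gamma/p$, and it shifts the remaining constant $q$-exponent $\gamma^2/2 + \alpha^2/(4p)$ to precisely $\ell^2/p$ after one more application of $\lambda_p^2 = -p/2$. Multiplying by $y^{-4/p} = e^{-8\pi i\kappa/p}$ yields the odd-$p$ formula. For even $p$, the $\tfrac12\ZZ$-grading of $\Bp$ inserts a sign $(-1)^k$ in the $k$-sum (the $k$-th summand has parity $k \bmod 2$), so $\sum_k (-1)^k z^{2k} q^{k\ell} = \sum_k e^{2\pi i k(2\zeta + \ell\tau + 1/2)} = \sum_{m\in\ZZ+1/2}\delta(2\zeta+\ell\tau - m)$ after re-indexing, yielding the super-character. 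The algebraic cancellations are clean; the only real subtlety is justifying the Poisson identity as an equality of tempered distributions in $\zeta$, as the original character sum converges only in a formal sense, and this has to be set up in the spirit of \cite{CR2, CR3}.
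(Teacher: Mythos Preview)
Your proposal is correct and follows essentially the same route as the paper's proof: decompose the induced module into Heisenberg--singlet summands, multiply the Fock characters, use $\lambda_p^2=-p/2$ to cancel the quadratic $k$-dependence, and apply the distributional Poisson identity $\sum_k e^{2\pi i k\theta}=\sum_m\delta(\theta-m)$, with the sign insertion $(-1)^k$ for even~$p$. Your write-up is in fact slightly more explicit than the paper's, since you spell out the delta-support substitution that converts the prefactor $z^{-4\lambda_p\gamma/p}q^{\gamma^2/2+\alpha^2/(4p)}$ into $e^{\pi i(\nu-4\ell/p)m}q^{\ell^2/p}$, which the paper leaves as a straightforward verification.
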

\begin{proof}
Let $p$ be odd. 
By construction, we have:
\begin{align}
\ch{E_{\gamma,\alpha}}{z;q} &=\sum_{k \in \mathbb{Z}}
e^{-8\pi i \kappa/p}\text{ch}\left[\FockH{\gamma+k\lambda_p}\boxtimes \FockS{\frac{\alpha+(k+1)p -1}{\sqrt{2p}}}\right]\nonumber\\
&=\sum_{k\in\mathbb{Z}}
\dfrac{e^{-8\pi i \kappa/p}z^{-4\gamma\lambda_p/p-4k\lambda_p^2/p}q^{(\gamma+k\lambda_p)^2/2}}{\eta(\tau)}
\dfrac{q^{\frac{1}{2}\left(  \frac{\alpha+(k+1)p-1}{\sqrt{2p}} -\frac{\alpha_0}{2}\right)^2}}{\eta(\tau)}\nonumber\\
&= \dfrac{e^{-8\pi i \kappa/p}z^{-4\gamma\lambda_p/p}q^{\frac{\gamma^2}{2}+\frac{\alpha^2}{4p}}}{\eta(\tau)^2}\sum_{k\in\ZZ}
z^{2k}q^{k(\gamma\lambda_p  + \frac{\alpha}{2})}\nonumber\\
&=\dfrac{e^{-8\pi i \kappa/p}e^{2\pi i \zeta(-4\gamma\lambda_p/p)}e^{2\pi i \tau\left(\frac{\gamma^2}{2}+\frac{\alpha^2}{4p}\right)}}{\eta(\tau)^2}
\sum_{m\in\ZZ}\delta(2\zeta + (\gamma\lambda_p+\alpha/2)\tau -m),
\end{align}
where our $\delta$-function is supported at $0$.
Passing to the $\nu, \ell$ notation \eqref{eqn:nuellnotation}, we observe the following relations:
\begin{align}
-4\lambda_p\gamma\zeta/p&=  \zeta\nu  - 4\zeta\ell/p, \\
\gamma^2/2 + \alpha^2/4p &= -\ell^2/p + \nu\ell/2.
\end{align}
The expression for $\ch{E_{\nu,\ell}}{z;q}$ follows straightforwardly.
For $p$ even, we introduce a factor of $e^{\pi i k}$ in the very first summation. The rest now follows similarly.
\end{proof}

If $p$ is odd, we set 
\begin{align}
\sS\left\lbrace\text{ch}[E_{\nu,\ell}]\right\rbrace&=
\frac{e^{-8\pi i (\kappa-\zeta^2/\tau)/p}}{\eta(-1/\tau)^2}\sum_{m\in\ZZ}e^{-2\pi i \ell^2/p\tau}e^{\pi i (\nu -4\ell/p)m}\delta\left(\frac{2\zeta-\ell-m\tau}{\tau}\right)\nonumber\\
&=\frac{|\tau|}{-i \tau\eta(\tau)^2}e^{-8\pi i (\kappa-\zeta^2/\tau)/p}\sum_{m\in\ZZ}e^{-8\pi i\zeta^2/p\tau} e^{2\pi i m^2\tau/p}e^{\pi i \nu m}\delta\left({2\zeta-\ell-m\tau}\right)\nonumber\\
&=\frac{|\tau|}{-i \tau\eta(\tau)^2}e^{-8\pi i \kappa/p}\sum_{m\in\ZZ} e^{2\pi i m^2\tau/p}e^{\pi i \nu m}\delta\left({2\zeta-\ell-m\tau}\right)
\end{align}
and similarly if $p$ is even,
\begin{align}
\sS\left\lbrace\text{sch}[E_{\nu,\ell}]\right\rbrace
&=\frac{|\tau|}{-i \tau\eta(\tau)^2}e^{-8\pi i \kappa/p}\sum_{m\in\ZZ + (1/2)} e^{2\pi i m^2\tau/p}e^{\pi i \nu m}\delta\left({2\zeta-\ell-m\tau}\right).
\end{align}
Regardless of the parity of $p$, let: 
\begin{equation} \label{SchiTYPICALS}
\sS^{\chi}_{(\nu,\ell),(\nu',\ell')}=\frac{|\tau|}{-i\tau}\frac{1}{2}e^{\pi i\left( 4\ell\ell'/p -\ell\nu' - \ell'\nu\right)}.	
\end{equation}
Now we show that this $\sS$-matrix correctly gets us the $\sS$-transformations of the characters in the $p$ odd case.
In the $p$ even case, the calculation is similar, except with characters replaced with supercharacters and 
summations over $\ZZ$ now changed to summations over $\ZZ+(1/2)$.
\begin{align}
&\sum_{\ell'\in\ZZ}\int_{-1}^1\sS^{\chi}_{(\nu,\ell),(\nu',\ell')}\text{ch}[E_{\nu',\ell'}]d\nu'\nonumber\\
&=\sum_{\ell'\in\ZZ}\int_{-1}^1\frac{|\tau|}{-i\tau}\frac{1}{2}e^{\pi i\left( 4\ell\ell'/p -\ell\nu' - \ell'\nu\right)}
\frac{e^{-8\pi i \kappa/p}}{\eta(q)^2}\sum_{m\in\ZZ}e^{2\pi i \tau\ell'^2/p}e^{\pi i (\nu' -4\ell'/p)m}\delta(2\zeta+\ell'\tau-m)d\nu'\nonumber\\
&=\sum_{\ell'\in\ZZ}\frac{|\tau|}{-i\tau}e^{\pi i\left( 4\ell\ell'/p  - \ell'\nu\right)}
\frac{e^{-8\pi i \kappa/p}}{\eta(q)^2}\sum_{m\in\ZZ}e^{2\pi i \tau\ell'^2/p}e^{\pi i (-4\ell'/p)m}\delta(2\zeta+\ell'\tau-m)\int_{-1}^1
\frac{1}{2}e^{\pi i (\nu'm -\nu'\ell)}d\nu'\nonumber\\
&=\sum_{\ell'\in\ZZ}\frac{|\tau|}{-i\tau}e^{\pi i\left( 4\ell\ell'/p  - \ell'\nu\right)}
\frac{e^{-8\pi i \kappa/p}}{\eta(q)^2}\sum_{m\in\ZZ}e^{2\pi i \tau\ell'^2/p}e^{\pi i (-4\ell'/p)m}\delta(2\zeta+\ell'\tau-m)\delta(\ell-m)\nonumber\\
&=\sum_{\ell'\in\ZZ}\frac{|\tau|}{-i\tau}e^{\pi i\left(- \ell'\nu\right)}
\frac{e^{-8\pi i \kappa/p}}{\eta(q)^2}e^{2\pi i \tau\ell'^2/p}\delta(2\zeta+\ell'\tau-\ell)\nonumber\\
&=\sum_{m\in\ZZ}\frac{|\tau|}{-i\tau\eta(q)^2}e^{-8\pi i \kappa/p}e^{\pi i m\nu}
e^{2\pi i \tau m^2/p}\delta(2\zeta-m\tau-\ell)\nonumber\\
&=\sS^{\chi}\left\lbrace\text{ch}[E_{\nu,\ell}]\right\rbrace. 
\end{align}

All that remains now is to make an appropriate choice of normalisation. For this purpose, we obtain a resolution of the tensor identity $S_0 \otimes \mathbb{C}_0^H$, and by induction, a resolution of $\Ap$ in terms of other typical modules through the short exact sequences from Lemma \ref{prop:SESresolutionofAtypicals}. We transfer this resolution to the \voa{} side and obtain a character relation for $\Bp$ from Proposition \ref{correspondence}.\\

Shifting the short exact sequence in Lemma \ref{prop:SESresolutionofAtypicals} by $(j,\ell)\to(j+1,\ell-1)$, we can obtain a family of linked short exact sequences
\begin{equation}
0 \rightarrow S_{(p-2)-j_k} \otimes \mathbb{C}_{(\ell_k-1)p}^H \rightarrow V_{(j_k+1)+ (\ell_k-1)p} \xrightarrow{f_k} S_{j_k} \otimes \mathbb{C}_{\ell_k p}^H \rightarrow 0,
\end{equation}

where $(j_{k+1},\ell_{k+1})=(p-2-j_k,\ell_k-1)$. We therefore obtain a long exact sequence

\begin{equation} \label{preRESOLUTIONtensorid}
\cdots \rightarrow  V_{n_4} \xrightarrow{f_4} V_{n_3} \xrightarrow{f_3} V_{n_2} \xrightarrow{f_2} V_{n_1} \xrightarrow{f_1} V_{n_0} \xrightarrow{f_0}  S_{j_0} \otimes \CC^H_{\ell_0 p} \rightarrow  0
\end{equation}
where $n_k=(j_k+1)+(\ell_k-1)p$. We can take $(j_0,\ell_0)=(0,0)$, tensor this sequence with $\FockH{0}$, and apply the induction functor (see Definition \ref{inductionfunctor}) and lastly, the correspondence in Proposition \ref{correspondence}. This sends $S_0 \otimes \mathbb{C}^H_{0}$ to $\Bp$, so we obtain the following relation for the character of $\Bp$:
\begin{equation} \label{chBprelation} \mathrm{ch}[\Bp] = \sum_{m=0}^\infty (-1)^m \mathrm{ch}[Y_{n_m}] = \sum_{m=0}^\infty \big(\mathrm{ch}[Y_{n_{2m}}] - \mathrm{ch}[Y_{n_{2m+1}}]\big), \end{equation}
where $Y_{n_m}=\ind(\FockC{0} \boxtimes \FockS{\frac{n_m+p-1}{\sqrt{2p}}})$ (recall that $\FockS{\frac{n_m+p-1}{\sqrt{2p}}}$ corresponds to $V_{n_m}$ through proposition \ref{correspondence}). Note that by Theorem \ref{liftingcondition}, the module $\FockC{0} \boxtimes V_{n_m}$ lifts to $\Rep^{0}(\Ap)$ through the induction functor iff $n_m+p-1 \in 2\ZZ$.  Observe by a simple induction that with $(j_0,\ell_0)=(0,0)$, the indices $n_m$ satisfy
\begin{equation}
n_m = \left\{\begin{array}{cl} 1-(m+1)p & \text{for } m \text{ even} \\
-1 - mp & \text{for } m \text{ odd},
\end{array}\right.  
\end{equation}
so $\FockH{0} \boxtimes V_{n_m}$ lifts to $\Rep^{0} \Ap$ 
and $\FockC{0}\boxtimes \FockS{\frac{n_m+p-1}{\sqrt{2p}}}$ lifts to $\Rep^{0}\Bp$
for all $m$ and $p$. Adopting the paramertisation \eqref{eqn:nuellnotation} for the modules $Y_{n_m}$ gives the following:
\begin{equation}
Y_{n_m} = \left\{\begin{array}{cl} E_{\frac{2(1-(m+1)p)}{p},\frac{1-(m+1)p}{2}} & \text{for } m \text{ even} \\
E_{\frac{2(-1-mp)}{p},\frac{-1-mp}{2}} & \text{for } m \text{ odd}.
\end{array}\right. 
\end{equation}
We write $\mathds{1}$ for the $\Bp$-algebra 
and obtain from \eqref{chBprelation} the relation
\begin{equation} \label{niceSseries}
\sS^\chi_{\mathds{1},(\nu',\ell')} = \sum_{m = 0}^\infty \left( \sS^\chi_{\left(\frac{2(1-(2m+1)p)}{p},\frac{1-(2m+1)p}{2}\right),(\nu',\ell')} - \; \sS^\chi_{\left(\frac{2(-1-(2m+1)p)}{p},\frac{-1-(2m+1)p}{2}\right),(\nu',\ell')} \right).
\end{equation}
By substituting \eqref{SchiTYPICALS} into \eqref{niceSseries} and simplifying, we deduce that
\begin{align*}
\sS^\chi_{\mathds{1},(\nu',\ell')} &= \frac{|\tau|}{-i\tau}\frac{1}{2}\sum_{m = 0}^\infty \left( e^{-\pi i \frac{1-(2m+1)p}{2}\nu'} - \; e^{-\pi i \frac{-1-(2m+1)p}{2}\nu'} \right)\nonumber\\
&= \frac{|\tau|}{-i\tau}\frac{1}{2}\sum_{m = 0}^\infty \left( e^{\pi i (p-1) \frac{\nu'}{2}}e^{\pi i \frac{2mp}{2}\nu'} - \; e^{\pi i (p+1) \frac{\nu'}{2}}e^{\pi i \frac{2m p}{2}\nu'} \right).
\end{align*}
Setting $x = e^{\pi i \frac{\nu'}{2}}$, we see that
\begin{align}
\sS^\chi_{\mathds{1},(\nu',\ell')} &= \frac{|\tau|}{-i\tau}\frac{1}{2} \left( x^{p-1} \sum_{m = 0}^\infty x^{2mp} - x^{p+1} \sum_{m = 0}^\infty x^{2m p} \right).\nonumber \\
&=\frac{|\tau|}{-i \tau} \frac{x^p}{2}(x^{-1}-x) \sum\limits_{m=0}^{\infty} x^{2mp}
\end{align}

Since $x$ lies on the unit circle this sum is not convergent. If we infinitesimally deform $x$ to lie within the unit circle, this sum converges to 
\begin{align}
 \frac{|\tau|}{-i\tau}\frac{1}{2} \cdot \frac{x-x^{-1}}{x^p-x^{-p}} \;  .
\end{align}

This is the motivation for our choice of normalisation factor $\sS^{\chi}_{\mathds{1}, (\nu',\ell')}$, i.e. we define it to be
\begin{align}
\sS^\chi_{\mathds{1},(\nu',\ell')} := \frac{|\tau|}{-i\tau}\frac{1}{2} \cdot \frac{x-x^{-1}}{x^p-x^{-p}}, \label{Schinormalisation} 
\end{align}
where $x = e^{\pi i \frac{\nu'}{2}}$. As a further motivation we note that this type of regularization worked very well in the Verlinde formula story of \cite{CR2, CR3}.

We will now compare the modular $\sS^{\chi}$ matrix coming from modular transformations of characters for $\Bp$ with the $\sS^{\hopflink}$ matrix coming from Hopf links in $\C$ for typical modules calculated on the quantum group side. 
These quantities agree up to normalised conjugation.

\begin{proposition} \label{typicalcomparison}The normalized character matrix $\sS^{\chi}$ and Hopf link $\sS^{\hopflink}$ agree up to conjugation. That is, 
\begin{align}
\frac{\sS^{\chi \ast}_{(\nu, \ell),(\nu ', \ell ')}}{\sS^{\chi \ast}_{ \mathds{1},(\nu ', \ell ')}} = \frac{\sS^{\hopflink}_{(\nu,\ell),(\nu ', \ell ')}}{\sS^{\hopflink}_{\mathds{1},(\nu ', \ell ')}},
\end{align}
where $\ast$ denotes complex conjugation of the entries.
\end{proposition}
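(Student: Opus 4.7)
The approach is to evaluate both sides of the identity directly using the formulas already established in the paper and to verify that they coincide.

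For the left-hand side, substitute \eqref{SchiTYPICALS} and \eqref{Schinormalisation} into the ratio. The prefactor $\frac{|\tau|}{-i\tau}\cdot\frac{1}{2}$ cancels between numerator and denominator. Since $\nu,\nu',\ell,\ell'$ are real, complex conjugation sends $e^{\pi i(4\ell\ell'/p-\ell\nu'-\ell'\nu)}$ to its inverse, and the factor $\frac{x-x^{-1}}{x^p-x^{-p}}$ with $x = e^{\pi i\nu'/2}$ is invariant under $x\mapsto x^{-1}$ (numerator and denominator both flip sign) and is therefore real. This reduces the left-hand side to $e^{-\pi i(4\ell\ell'/p-\ell\nu'-\ell'\nu)}\cdot\frac{x^p-x^{-p}}{x-x^{-1}}$.

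For the right-hand side, Proposition \ref{RepApstructure} gives $\sS^{\hopflink}_{\ind(U),\ind(V)} = \sS^{\hopflink}_{U,V}$, and the Hopf link on $\Coplus = \HR\boxtimes\UHbar{2}\text{-Mod}$ factors over the two tensor factors. The Heisenberg factor contributes the monodromy scalar $e^{2\pi i\gamma\gamma'}$ from \eqref{HRbraidtwist}. For the quantum-group factor, the open-Hopf-link scalar on $V_{\alpha'}$ is obtained by partial (modified) trace over $V_\alpha$ of the double braiding: applying the $R$-matrix \eqref{Uqbraiding} on a weight basis shows that the nilpotent $E^n\otimes F^n$ contributions with $n\geq 1$ drop out of the partial trace, while the $q^{H\otimes H/2}$ piece yields the geometric sum $\sum_{i=0}^{p-1}q^{(\alpha+p-1-2i)(\alpha'+p-1)} = q^{\alpha(\alpha'+p-1)}\cdot\frac{q^{p(\alpha'+p-1)}-q^{-p(\alpha'+p-1)}}{q^{\alpha'+p-1}-q^{-(\alpha'+p-1)}}$. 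The normalisation $\sS^{\hopflink}_{\mathds{1},(\nu',\ell')}$ is the same calculation with $\alpha = 0,\gamma = 0$, and cancels the denominator $\{\alpha'+p-1\}$. Translating to $(\nu,\ell)$-coordinates via \eqref{eqn:nuellnotation} using $\lambda_p^2 = -p/2$, the combined phase $e^{2\pi i\gamma\gamma'}\,q^{\alpha(\alpha'+p-1)}$ collapses to $e^{-\pi i(4\ell\ell'/p-\ell\nu'-\ell'\nu)}$ after the $\alpha'$-linear terms are absorbed into the normalisation, and the residual $q$-number ratio becomes $\frac{x^p-x^{-p}}{x-x^{-1}}$ with $x = e^{\pi i\nu'/2}$, matching the simplified left-hand side.

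The main obstacle is the correct interpretation of the Hopf link as a single scalar in this non-semisimple setting: since typical modules are projective with vanishing ordinary quantum dimension, one must take the partial (modified) trace rather than the full categorical trace, and verify that the nilpotent $R$-matrix contributions are annihilated by this partial trace. Beyond that, the bulk of the argument is careful algebraic bookkeeping: reconciling the $(\gamma,\alpha)\leftrightarrow(\nu,\ell)$ reparametrisation, matching normalisation conventions, and keeping track of the complex conjugation sign that reflects the braid-reversed nature of the conjectural equivalence noted in Remark \ref{remark:cc}.
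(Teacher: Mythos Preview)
Your overall strategy --- compute both normalised ratios explicitly and check equality --- is exactly what the paper does, and your treatment of the left-hand side is correct and essentially identical to the paper's. The right-hand side, however, contains a concrete error. The open Hopf link $\Phi_{V_\alpha,V_{\alpha'}}$ is a categorical partial trace over $V_\alpha$ of the monodromy, and that partial trace carries the pivotal element $K^{1-p}$ coming from $\overleftarrow{\text{ev}}_{V_\alpha}$. Concretely, the scalar by which $\Phi_{V_\alpha,V_{\alpha'}}$ acts on a weight-$\lambda$ vector is $\Psi_{\lambda+1-p}(\chi(V_\alpha))$, not $\Psi_\lambda(\chi(V_\alpha))$; with $\lambda=\alpha'+p-1$ this shift turns the exponent in your geometric sum into $\alpha'$, not $\alpha'+p-1$. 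The correct scalar is $q^{\alpha\alpha'}\{p\alpha'\}/\{\alpha'\}$, whereas your sum $\sum_i q^{(\alpha+p-1-2i)(\alpha'+p-1)}$ produces $q^{\alpha(\alpha'+p-1)}\{p(\alpha'+p-1)\}/\{\alpha'+p-1\}$, and $\{\alpha'+p-1\}=-\{\alpha'-1\}\neq\{\alpha'\}$, so the residual $q$-number ratio does not simplify to $(x^p-x^{-p})/(x-x^{-1})$ with $x=q^{\alpha'}=e^{\pi i\nu'/2}$.

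There is also a confusion in the normalisation step. The tensor unit is $\mathds{1}=\FockH{0}\boxtimes S_0$, the one-dimensional trivial module; setting ``$\alpha=0$'' in a $V_\alpha$ formula gives $V_0\cong S_{p-1}$, which is $p$-dimensional and not the identity. Since $\Phi_{S_0,W}=\Id_W$, the normalised Hopf link \emph{is} the open Hopf link scalar, with nothing further to absorb --- so your phrase ``the $\alpha'$-linear terms are absorbed into the normalisation'' has no mechanism behind it. The paper sidesteps both issues by quoting the closed-form values $\sS^{\hopflink}_{V_{\alpha_1},V_{\alpha_2}}=(-1)^{p-1}pq^{\alpha_1\alpha_2}$ and $\sS^{\hopflink}_{S_0,V_{\alpha_2}}=(-1)^{p-1}p\{\alpha_2\}/\{p\alpha_2\}$ from \cite[Lemma~6.6]{CGP}, forming the ratio $e^{2\pi i\gamma_1\gamma_2}q^{\alpha_1\alpha_2}\{p\alpha_2\}/\{\alpha_2\}$, and then passing to $(\nu,\ell)$-coordinates via \eqref{eqn:nuellnotation}.
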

\begin{remark} \label{remark:cc} There are two ways to explain the appearance of complex conjugation
\begin{enumerate}
\item Braiding is a structure that we give our tensor category and we could equally well have chosen the reversed braiding.
In our case this would have amounted to have chosen the inverse of the $R$-matrix as $R$-matrix and the effect is the same as replacing $q$ by $q^{-1}$, that is Hopf links with reversed-braiding are precisely the complex conjugates of our Hopf links. 
\item In a rigid braided tensor category $\mathcal C$ with tensor identity $\mathds 1$ the map 
\[
\mathcal G(\mathcal C) \rightarrow \mathbb C, \qquad X \mapsto \frac{S^\hopflink_{X, Y}}{S^\hopflink_{\mathds 1, Y}}
\]
is a ring homomorphism from the Grothendieck ring of the category to $\mathbb C$ for any simple object $Y$. Composing with complex conjugation of course doesnot change the homomorphism property so that the Proposition can be rephrazed that normalied columns of the modular $S$-matrix give one-dmiensional representations of the Grothendieck ring.
\end{enumerate}
\end{remark}
\begin{proof}
It follows from equation \ref{SchiTYPICALS} and Definition \ref{Schinormalisation} that the normalisation of $\sS^{\chi}$ is given by
\begin{equation}
\frac{\sS^\chi_{(\nu,\ell),(\nu',\ell')}}{  \sS^\chi_{\mathds{1},(\nu',\ell')}} = \frac{x^p-x^{-p}}{x-x^{-1}} \cdot e^{\pi i\left( 4\ell\ell'/p -\ell\nu' - \ell'\nu\right)} 
\end{equation}
where $x = e^{\pi i \nu' / 2}$. 
Since the $\sS^{\hopflink}$ matrix is preserved under induction, we just calculate it in the category $\mathcal{C}$.
The $\sS^{\hopflink}$-matrix for typical modules satisfies the relation 
\begin{equation}
\sS^{\hopflink}_{\FockH{\gamma_1} \boxtimes V_{\alpha_1},\FockH{\gamma_2} \boxtimes V_{\alpha_2}}=\sS^{\hopflink}_{\FockH{\gamma_1},\FockH{\gamma_2}}\sS^{\hopflink}_{V_{\alpha_1},V_{\alpha_2}}
\end{equation}
where $\sS^{\hopflink}_{\FockH{\gamma_1},\FockH{\gamma_2}}=e^{2\pi i \gamma_1\gamma_2}$. By Lemma 6.6 in \cite{CGP}, we have 
\begin{align}
\sS^{\hopflink}_{V_{\alpha_1},V_{\alpha_2}}=(-1)^{p-1}pq^{\alpha_1\alpha_2},\qquad \sS^{\hopflink}_{S_0,V_{\alpha_2}}=(-1)^{p-1}\frac{p\{\alpha_2\}}{\{p\alpha_2\}}.
\end{align}
 Hence,
\begin{align}
\frac{\sS^{\hopflink}_{\FockH{\gamma_1} \boxtimes V_{\alpha_1},\FockH{\gamma_2} \boxtimes V_{\alpha_2}}}{\sS^{\hopflink}_{\FockH{0} \boxtimes S_0,\FockH{\gamma_2} \boxtimes V_{\alpha_2}}}=e^{2 \pi i \gamma_1 \gamma_2}q^{\alpha_1\alpha_2}\frac{\{p\alpha_2\}}{\{\alpha_2\}}. 
\end{align}
Setting $(\nu_i,\ell)=(\frac{2\alpha_1}{p},\lambda\gamma_1+\frac{\alpha_1}{2})$, $(\nu', \ell ')=(\frac{2\alpha_2}{p},\lambda\gamma_2+\frac{\alpha_2}{2})$, $x=e^{\pi i \nu'/2}$, and adopting the notation $\sS^{\hopflink}_{(\nu,\ell),(\nu ', \ell')}=\sS^{\hopflink}_{\FockH{\gamma_1} \boxtimes V_{\alpha_1},\FockH{\gamma_2} \boxtimes V_{\alpha_2}}$, $\mathds{1}=\FockH{0} \boxtimes S_0$, we easily see that
\[\frac{\sS^{\hopflink}_{(\nu,\ell),(\nu ' \ell ')}}{\sS^{\hopflink}_{\mathds{1},(\nu ', \ell ')}}=\frac{x^p-x^{-p}}{x-x^{-1}} \cdot  e^{- \pi i(4 \ell \ell'/p-\ell \nu '-\ell' \nu)}.\]
\end{proof}

\section{Modular data for atypical modules}\label{Sectionatypical}

Recall that by $\mathcal{G}^{\text{ss}}$ we mean the quotient of the Grothedieck ring by the ideal of the Grothendieck ring formed by the negligible objects.

In this section we derive $\mathcal{G}^{\text{ss}}(\mathcal{C}^{0})$ for the category of local modules $\mathcal{C}^{0}$ in $\mathcal{C}$ (those which are induced to $\Rep^{0} \Ap$ by the induction functor). We also compute the corresponding $\mathsf{S}^{\hopflink}$-matrix and make a comparison with the matrix $\sS^{\chi}$ coming from modular transformations of characters appearing in the Verlinde algebra of characters $\mathcal{V}^{\text{ss}}(\Bp)$ of the semisimplification of $\Bp$-Mod. The Verlinde formula then follows from the standard categorical argument.

\subsection{Structure of Grothendieck rings} \label{sec:grothendieck}
We start by determining structure of the underlying Grothendieck rings.
\begin{proposition}\label{Grothendieck}
$\mathcal{G}^{\text{ss}}(\mathcal{C}^{0})$ has minimal generating set with elements $\module{\gamma}{i}{p\ell}$ of the form
\begin{align}
\module{\frac{2n}{2\lambda_p}}{2m}{0}, & \qquad n \in \{0,1,\dots,\frac{p-1}{2}\}, m\in\{0,1,\dots,\frac{p-3}{2}\},\\
\module{\frac{2n+1}{2\lambda_p}}{2m}{p},& \qquad n\in \{0,1,\dots,\frac{p-3}{2}\}, m \in \{0,1,\dots,\frac{p-3}{2}\},
\end{align}
when p is odd, and
\begin{align}
\module{\frac{2n}{2\lambda_p}}{2m}{0}, & \qquad n \in \{0,1,\dots,\frac{p-2}{2}\}, m\in\{0,1,\dots,\frac{p-2}{2}\},\\
\module{\frac{2n+1}{2\lambda_p}}{2m+1}{0},& \qquad n \in \{0,1,\dots,\frac{p-2}{2}\}, m \in \{0,1,\dots,\frac{p-4}{2}\},
\end{align}
when p is even. The product is given by 
\begin{equation}\label{Grothendieckproduct}\module{\frac{x}{2\lambda_p}}{i}{p\ell} \mdot \module{\frac{y}{2\lambda_p}}{j}{p\ell '}\left\{=\begin{array}{cc}
\sum\limits_{\substack{l=|i-j| \\ \text{by } 2}}^{i+j} \module{\frac{x+y \text{ mod p}}{2\lambda_p}}{l}{p(\ell+\ell')\text{ mod 2p}}  & \text{ if $i+j<p$},\\
&\\
\sum\limits_{\substack{l=|i-j| \\ \text{by } 2}}^{2p-4-i-j}\module{\frac{x+y \text{ mod p}}{2\lambda_p}}{l}{p(\ell+\ell')\text{ mod 2p}}  & \text{if $i+j \geq p$}. 
\end{array}\right.
\end{equation}
The coefficients of the sum corresponding to a product of minimal generators are positive, giving a $\mathbb{Z}_+$-basis.
\end{proposition}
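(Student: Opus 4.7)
The plan is to (1) identify which objects of $\mathcal{C}^0$ are negligible, (2) use the short exact sequences of Lemma \ref{prop:SESresolutionofAtypicals} together with the simple-current identifications to deduce the stated finite basis of $\mathcal{G}^{\text{ss}}(\mathcal{C}^0)$, and (3) compute tensor products of atypical simples via the Clebsch-Gordan for $\UHbar{2}$-Mod truncated by negligible summands.

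For step (1), each typical simple $V_\alpha$ with $\alpha\in\ddot{\CC}$ has categorical dimension $\dim_q(V_\alpha)=q^{\alpha+p-1}\sum_{i=0}^{p-1}q^{-2i}=0$ (using $q^{2p}=1$); being simple with $\End(V_\alpha)=\CC$, it is negligible. The atypical reducibles $V_{k+\ell p}$ for $k\in\{1,\dots,p-1\}$ have non-isomorphic socle and top, so $\End=\CC$, and $\dim_q=(-1)^{\ell}([p-k]-[k])=0$ from $[p-k]=[k]$, so they too are negligible. The projectives $\PC{i}{\ell p}$ have two-dimensional endomorphism rings spanned by the identity and a nilpotent endomorphism, both with trace zero, so these are negligible as well. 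In contrast, the atypical simples $\SC{i}{\ell p}$ have $\dim_q=(-1)^{\ell}[i+1]\neq 0$, and tensoring with the one-dimensional $\FockH{\gamma}$ preserves (non-)negligibility, so $\mathcal{G}^{\text{ss}}(\mathcal{C}^0)$ is spanned by classes $[\module{\gamma}{i}{\ell p}]$ satisfying the locality condition $i+\ell p+2\lambda_p\gamma\in 2\ZZ$ of Theorem \ref{liftingcondition}.

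For step (2), the negligibility of each $V_{k+\ell p}$ combined with the first SES of Lemma \ref{prop:SESresolutionofAtypicals} forces the relation
\[
[\module{\gamma}{p-1-k}{\ell p}] = -[\module{\gamma}{k-1}{(\ell+1)p}]
\quad\text{in }\mathcal{G}^{\text{ss}}(\mathcal{C}^0)
\]
for $k\in\{1,\dots,p-1\}$. Iterating this twice identifies $[\module{\gamma}{i}{\ell p}]$ with $[\module{\gamma}{i}{(\ell+2)p}]$, which combined with the simple-current identification $[\ind(\module{\gamma}{i}{\ell p})]=[\ind(\module{\gamma+\lambda_p}{i}{(\ell+1)p})]$ (from tensoring with the invertible $\FC{\lambda_p}{p}$ whose induction is the unit in $\text{Rep}^0\Ap$) reduces $\ell p$ modulo $2p$ and, via the substitution $\gamma=x/(2\lambda_p)$, reduces $x$ modulo $p$, leaving the stated finite lists. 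The parity of $p$ enters through the parity of $p\ell$ in the locality condition, which explains the case split between even and odd $p$. For step (3), the tensor product decomposes componentwise with $\FockH{\gamma_1}\otimes\FockH{\gamma_2}\cong\FockH{\gamma_1+\gamma_2}$, $\mathbb{C}^H_{\ell p}\otimes\mathbb{C}^H_{\ell' p}\cong\mathbb{C}^H_{(\ell+\ell')p}$, and $S_i\otimes S_j$ following the Clebsch-Gordan of \cite[Section 8]{CGP}: when $i+j\leq p-2$ every summand is simple atypical, while when $i+j\geq p-1$ the summands at indices $\geq p-1$ are absorbed into typical simples or projective indecomposables of $\UHbar{2}$-Mod, both negligible by step (1). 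Only the symmetric truncated sum $l\in\{|i-j|,\dots,\min(i+j,2p-4-i-j)\}$ of atypical simple classes survives, matching the two cases of \eqref{Grothendieckproduct} after the modular reductions from step (2).

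The principal obstacle is step (3): carefully tracking, using \cite[Section 8]{CGP}, which summands of $S_i\otimes S_j$ become projective or typical (hence negligible) when $i+j$ crosses the threshold $p-1$, and verifying that the resulting truncated sum exactly matches the stated formula on both sides of this boundary. A secondary technical point is showing that the proposed finite lists of generators are $\ZZ$-linearly independent so they form an actual basis; this reduces to checking that no further relations beyond those arising from Lemma \ref{prop:SESresolutionofAtypicals} and the $\Ap$-action hold in $\mathcal{G}^{\text{ss}}(\mathcal{C}^0)$.
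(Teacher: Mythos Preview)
Your proposal is correct and follows essentially the same approach as the paper: both identify the negligible objects via vanishing quantum dimensions (typicals directly, projectives via the second short exact sequence of Lemma \ref{prop:SESresolutionofAtypicals}), use the first short exact sequence to derive the relation $[\module{\gamma}{p-1-k}{\ell p}]=-[\module{\gamma}{k-1}{(\ell+1)p}]$ and hence the $\ell\mapsto\ell+2$ periodicity, invoke the simple-current identification $\module{\gamma}{i}{p\ell}=\module{\gamma+\lambda_p}{i}{p(\ell+1)}$, and finally truncate the Clebsch--Gordan formula of \cite[Proposition 8.4]{CGP} by dropping the projective summands. Your flagged secondary point about $\ZZ$-linear independence of the listed generators is fair---the paper does not address it explicitly either, relying implicitly on the fact that the only relations imposed come from the vanishing of negligible classes and the $\Ap$-identification.
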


\begin{proof}
We first note that the quantum dimension of an object $V \in \UHbar{2}$-Mod is given by 
\[\sum_{i=0}^k v_i^*(K^{1-p}v_i), \]
where $\{v_0,...,v_k\}$ is a basis for V. Hence, if $V=V_{\alpha}$, then
\begin{align}
\qdim(V_{\alpha})=\sum\limits_{i=0}^{p-1}v_i^*(K^{1-p}v_i)=\sum\limits_{i=0}^{p-1}q^{(p-1)(\alpha+p-1-2i)}=q^{(p-1)(\alpha+p-1)}\frac{1-q^{2p}}{1-q^2}=0. 
\end{align}
By Lemma \ref{prop:SESresolutionofAtypicals} we have the short exact sequence
\[0 \rightarrow V_{r-1-i+\ell r} \rightarrow P_i  \otimes \Ck \rightarrow V_{1+i-r+\ell r} \rightarrow 0,\]
so we see that the quantum dimension of the $P_i \otimes \mathbb{C}_{\ell p}^H$ is also zero. Hence, $\mathcal{G}^{\text{ss}}(\mathcal{C}^{0})$ is generated by the elements corresponding to the $\FSC{\gamma}{i}{\ell p}$ which are induced into $\Rep^{0}(\Ap)$ by the induction functor, which by theorem \ref{liftingcondition}, are those objects such that $i+p \ell +2\lambda_p \gamma \in 2\mathbb{Z}$. We will denote the object in $\mathcal{G}^{\text{ss}}(\mathcal{C}^{0})$corresponding to $\FSC{\gamma}{i}{\ell p}$ by $\module{\gamma}{i}{p\ell}$. Recall from Lemma \ref{prop:SESresolutionofAtypicals} that for any $i\in \{1,...,p-1\}$ we have the short exact sequence
\[ 0 \rightarrow S_{p-1-i} \otimes \Ck \rightarrow V_{i+\ell p} \rightarrow S_{i -1} \otimes \mathbb{C}_{p(\ell+1)}^H \rightarrow 0,\]
so we have $\module{\gamma}{p-1-i}{p\ell}=-\module{\gamma}{i-1}{p(\ell+1)}$. So, for any $i \in \{0,...,p-2\}$, we have
\begin{align}
\module{\gamma}{i}{p\ell}=\module{\gamma}{p-1-(p-1-i)}{p\ell}=-\module{\gamma}{p-1-(i+1)}{p(\ell+1)}=\module{\gamma}{i}{p(\ell+2)}.
\end{align}
Hence,  $\mathcal{G}^{\text{ss}}(\mathcal{C}^{0})$ is generated by elements of the form $\module{\gamma}{i}{0}$ and $\module{\gamma}{i}{p}$ which satisfy
\begin{align}
i+2\lambda_p\gamma &\in 2\mathbb{Z}, \label{eqn1}\\
i+p+2\lambda_p\gamma &\in 2\mathbb{Z}.\label{eqn2}
\end{align}
If $i$ is odd in \eqref{eqn1}, then $i+2\lambda_p \gamma$ being even implies that $\gamma=\frac{2n+1}{2\lambda_p}$ for some $n \in \mathbb{Z}$, and $i$ even in \eqref{eqn1} implies $\gamma=\frac{2n}{2\lambda_p}$ for some $n \in \mathbb{Z}$. If $p$ and $i$ are both odd or even in \eqref{eqn2}, then $i+p$ is even, so $\gamma=\frac{2n}{2\lambda_p}$ for some $n \in \mathbb{Z}$. If one of $p$ and $i$ is odd in \eqref{eqn2}, and one is even, then $i+p$ is odd, so $\gamma=\frac{2n+1}{2\lambda_p}$ for some $n \in \mathbb{Z}$. Notice also that $\FC{\lambda_p}{p}$ induces to the identity in $\Rep^{0} \Ap$ and hence corresponds with the identity in $\mathcal{C}^{0}$. So, we have the relation
\begin{align}
 \module{\gamma}{i}{p\ell}=\module{\gamma}{i}{p\ell} \mdot \module{\lambda_p}{0}{p}=\module{\gamma+\lambda_p}{i}{p(\ell+1)}, 
 \end{align}
and $\frac{p}{2\lambda_p}+\lambda_p=0$, so the elements $\module{\gamma}{i}{p\ell}$ have $\gamma\in \{0,\frac{1}{2\lambda_p},...,\frac{p-1}{2\lambda_p}\}$. Combining the above remarks, we see that when $p$ is odd, $\mathcal{G}^{\text{ss}}(\mathcal{C}^{0})$ is generated by the objects
\begin{align*}
\module{\frac{2n}{2\lambda_p}}{2m}{0}, & \qquad n \in \{0,1,\dots,\frac{p-1}{2}\}, m\in\{0,1,\dots,\frac{p-3}{2}\}\\
\module{\frac{2n}{2\lambda_p}}{2m+1}{p}, & \qquad n \in \{0,1,\dots,\frac{p-1}{2}\}, m \in \{0,1,\dots,\frac{p-3}{2}\}\\
\module{\frac{2n+1}{2\lambda_p}}{2m+1}{0},& \qquad n \in \{0,1,\dots,\frac{p-3}{2}\}, m \in \{0,1,\dots,\frac{p-3}{2}\}\\
\module{\frac{2n+1}{2\lambda_p}}{2m}{p},& \qquad n\in \{0,1,\dots,\frac{p-3}{2}\}, m \in \{0,1,\dots,\frac{p-3}{2}\}
\end{align*}
and if $p$ is even, then $\mathcal{G}^{\text{ss}}(\mathcal{C}^{0})$ is generated by the objects
\begin{align*}
\module{\frac{2n}{2\lambda}}{2m}{0}, & \qquad n \in \{0,1,\dots,\frac{p-2}{2}\}, m\in\{0,1,\dots,\frac{p-2}{2}\}\\
\module{\frac{2n}{2\lambda}}{2m}{p},& \qquad n\in \{0,1,\dots,\frac{p-2}{2}\}, m \in \{0,1,\dots,\frac{p-2}{2}\}\\
\module{\frac{2n+1}{2\lambda}}{2m+1}{0},& \qquad n \in \{0,1,\dots,\frac{p-2}{2}\}, m \in \{0,1,\dots,\frac{p-4}{2}\}\\
\module{\frac{2n+1}{2\lambda}}{2m+1}{p}, & \qquad n \in \{0,1,\dots,\frac{p-2}{2}\}, m \in \{0,1,\dots,\frac{p-4}{2}\}
\end{align*}
The tensor products for the $\FockH{\gamma}$ and $S_i$ modules (see \cite[Proposition 8.4]{CGP}) are given by 
\begin{align*}
\FockH{\gamma} \otimes \FockH{\gamma'}=\FockH{\gamma+\gamma'} \qquad \text{and} \qquad
S_i \otimes S_j= \left\{ 
\begin{array}{cc}
\bigoplus\limits_{\substack{l=|i-j| \\ \text{by } 2}}^{i+j} S_l  & \text{ if $i+j<p$},\\
&\\
\bigoplus\limits_{\substack{l=|i-j| \\ \text{by } 2}}^{2p-4-i-j} S_l \oplus \bigoplus\limits_{\substack{l=2p-2-i-j \\ \text{by } 2}}^{r-1} P_l & \text{if $i+j \geq p$}. 
\end{array}
\right.
\end{align*}
The projective indecomposable modules denoted by $P_{i}$ have quantum dimension zero so their corresponding object in $\mathcal{G}^{\text{ss}}(\mathcal{C}^{0})$ is zero. We therefore have the product
\begin{equation}\label{Grothendieckproduct}\module{\frac{x}{2\lambda}}{i}{p\ell} \mdot \module{\frac{y}{2\lambda}}{j}{p\ell '}\left\{=\begin{array}{cc}
\sum\limits_{\substack{l=|i-j| \\ \text{by } 2}}^{i+j} \module{\frac{x+y \text{ mod p}}{2\lambda}}{l}{p(\ell+\ell')\text{ mod 2p}}  & \text{ if $i+j<p$},\\
&\\
\sum\limits_{\substack{l=|i-j| \\ \text{by } 2}}^{2p-4-i-j}\module{\frac{x+y \text{ mod p}}{2\lambda}}{l}{p(\ell+\ell')\text{ mod 2p}}  & \text{if $i+j \geq p$}. \\
\end{array}\right.\\
\end{equation}
This is not a minimal generating set since we have not yet accounted for the relation $\module{\gamma}{p-1-i}{p\ell}=-\module{\gamma}{i-1}{p(\ell+1)}$. Consider the case $p$ odd. Notice that every generator of the form $\module{\frac{2n}{2\lambda}}{2m}{0}$ corresponds to a generator of the form $\module{\frac{2n}{2\lambda}}{2m+1}{p}$, and each generator of the form $\module{\frac{2n+1}{2\lambda}}{2m+1}{0}$ to one of the form $\module{\frac{2n+1}{2\lambda}}{2m}{p}$ under $\module{\gamma}{p-1-i}{p\ell}=-\module{\gamma}{i-1}{p(\ell+1)}$. Hence, the set
\begin{align*}
\module{\frac{2n}{2\lambda}}{2m}{0}, & \qquad n \in \{0,...,\frac{p-1}{2}\}, m\in\{0,...,\frac{p-3}{2}\}\\
\module{\frac{2n+1}{2\lambda}}{2m}{p},& \qquad n\in \{0,...,\frac{p-3}{2}\}, m \in \{0,...,\frac{p-3}{2}\}
\end{align*}
is a minimal generating set for $\mathcal{G}^{\text{ss}}(\mathcal{C}^{0})$. Notice that the product of any pair of elements of this minimal generating set will be a sum of generators of the form $\module{\frac{x}{2\lambda}}{i}{p\ell}$ where $i$ must be even, and hence will be a sum of elements in the minimal generating set with positive coefficients. Therefore, this minimal generating set in fact gives a $\mathbb{Z}_+$-basis for the ring when $p$ is odd.\\

When $p$ is even, by the exact same considerations we see that
\begin{align*}
\module{\frac{2n}{2\lambda}}{2m}{0}, & \qquad n \in \{0,1,\dots,\frac{p-2}{2}\}, m\in\{0,1,\dots,\frac{p-2}{2}\}\\
\module{\frac{2n+1}{2\lambda}}{2m+1}{0},& \qquad n \in \{0,1,\dots,\frac{p-2}{2}\}, m \in \{0,1,\dots,\frac{p-4}{2}\}
\end{align*}
is a minimal generating set. Further, the product of generators of any of these generators with another of the same form will be a sum of generators  $\module{\frac{2n}{2\lambda}}{2m}{0}$ with positive coefficients. A product of a generator $\module{\frac{2n}{2\lambda}}{2m}{0}$ with another $\module{\frac{2n+1}{2\lambda}}{2m+1}{0}$ will then be a sum of generators $\module{\frac{2n+1}{2\lambda}}{2m+1}{0}$ with positive coefficients. Hence, this minimal generating set is again a $\mathbb{Z}_+$-basis.\\
\end{proof}

\subsection{Comparison for odd $p$} \label{oddp}
Let $V,W \in \UHbar{2}$-Mod and $w \in W$ a highest weight vector of weight $\lambda$. For any $\gamma \in \mathbb{C}$, define $\Psi_{\gamma}:\mathbb{Z}[z] \rightarrow \mathbb{C}$ by $\Psi_{\gamma}(z^s)=q^{\gamma s}$. Then, as noted in the proof of Lemma 6.6 in \cite{CGP}, the open Hopf links $\Phi_{V,W}$ satisfy $\Phi_{V,W}(w)=\Psi_{\lambda+1-p}(\chi(V))w$ where $\chi(V)$ is the character of $V$. Clearly then, when $W$ is simple, $\Phi_{V,W}=\Psi_{ \lambda+1-p}( \chi (V))\Id_W$. Using the fact that the Hopf link $\sS^{\hopflink}_{V,W}$ is the trace of the open Hopf link $\Phi_{V,W}$ and the appropriate character formulas \cite[Equation (16)]{CGP}, it is easy to show that 
\begin{equation}
\sS^{\hopflink}_{S_i \otimes \mathbb{C}_{pk}^H,S_j \otimes \mathbb{C}_{p \ell}^H}=(-1)^{k(j+p(\ell-1)+1)+(i+1)(\ell+1)+\ell+1}\frac{\{(i+1)(j+1)\}}{\{j+1\}} \tr(\Id_{S_j \otimes \mathbb{C}_{p\ell}^H})
\end{equation}
 where $\text{tr}(\Id_{S_j \otimes \mathbb{C}_{p\ell}^H})= (-1)^{(1-p)\ell+j}[j+1].$ The Hopf links in $\mathcal{H}_{i\mathbb{R}^{\oplus}}$ (see Subsection \ref{Heisenberg}) are easily seen to be $\sS^{\hopflink}_{\FockH{\gamma_1,\FockH{\gamma_2}}}=e^{\pi i \gamma_1\gamma_2}e^{\pi i \gamma_2\gamma_1}=e^{2\pi i\gamma_1\gamma_2}$. Therefore, the Hopf links are given by
\begin{align}\label{atypicaluqhopf}
\hopf{\gamma_1}{i}{k}{\gamma_2}{j}{\ell}&=\sS^{\hopflink}_{S_i \otimes \mathbb{C}_{pk}^H, S_j \otimes \mathbb{C}_{p\ell}^H}\cdot \sS^{\hopflink}_{\FockH{\gamma_1},\FockH{\gamma_2}} \nonumber \\
&=(-1)^{k(j+p(\ell-1)+1)+(i+1)(\ell+1)+\ell+1+(1-p)\ell+j}e^{2\pi i \gamma_1 \gamma_2}[(i+1)(j+1)] \nonumber\\
&=(-1)^{(i+1)(\ell+1)+(j+1)(k+1)+p(k\ell+k+\ell)}e^{2 \pi i \gamma_1 \gamma_2}[(i+1)(j+1)].
\end{align}

Normalizing the $\sS^{\hopflink}$ matrix and restricting to odd $p$ (i.e., using $i,j \in 2\mathbb{Z}$ for all elements of our generating set) gives
\begin{align*}
\frac{\hopf{\gamma_1}{i}{k}{\gamma_2}{j}{\ell}}{\hopf{0}{0}{0}{\gamma_2}{j}{\ell}}=e^{\pi i( 2\gamma_1\gamma_2+pkl)}\frac{\{(i+1)(j+1)\}}{\{j+1\}}.
\end{align*} 

Below, in order to find $\sS^\chi$, we shall use certain modular transformation properties from \cite{C}. For this, it will be beneficial for us to re-parametrize by setting $(s,s')=(i+1,-2\lambda_p\gamma_1-pk), (n,n')=(j+1,-2\lambda_p\gamma_2-p\ell)$ and adopting the notation $\sS^{\hopflink}_{(s,s'),(n,n')}:=\hopf{\gamma_1}{i}{k}{\gamma_2}{j}{\ell}$, we see that
\begin{equation}\label{atypicalinfty}
\frac{\sS^{\hopflink}_{(s,s'),(n,n')}}{\sS^{\hopflink}_{(1,0),(n,n')}}= q^{-n's'}\frac{\{ns\}}{\{n\}}. 
\end{equation}

 Recall that $\ind( \FockH{\gamma} \boxtimes (S_i \otimes \mathbb{C}_{p \ell}^H))=\bigoplus\limits_{k \in \mathbb{Z}} \FockH{\gamma+ k\lambda_p} \boxtimes (S_i \otimes \mathbb{C}_{p(k + \ell)}^H)$ which corresponds to 
\[ \bigoplus\limits_{k \in \mathbb{Z}} \FockH{\gamma+k\lambda_p} \boxtimes M_{1-(k+\ell),i+1} \]
 under the correspondence in Proposition \ref{correspondence}. This module is the spectral flow (\cite[Subsection 3.1.2]{C})
\begin{equation}\label{sigmass} \sigma^{s'}(W_s):=\bigoplus\limits_{\tilde{k} \in \mathbb{Z}} \FockH{\tilde{\lambda_p}(\tilde{k}-\frac{s'}{p})} \boxtimes M_{1+\tilde{k},s}' \end{equation}
of the module $W_s$ defined in \cite[Subsection 4.2]{C}, where $\tilde{\lambda_p}=-\lambda_p$, $\tilde{k}=-k-\ell$, and $(s,s')=(i+1,-2\lambda_p \gamma-p\ell)$. Consider the basis for odd $p$ given in Proposition $\ref{Grothendieck}$. By applying the relation $\module{\gamma}{p-1-\alpha}{p\ell}=-\module{\gamma}{\alpha-1}{p(\ell+1)}$ to the generators of the form $\module{\frac{2n+1}{2\lambda_p}}{2m}{p}$, keeping in mind that $\tilde{\lambda_p}=-\lambda_p$ and that the correspondence in Proposition \ref{correspondence} preserves tensor structure up to character, it is easy to see that the Verlinde algebra of characters, $\mathcal{V}(\Bp)$, generated by the atypical modules of $\Bp$ has a generating set 
\[ \{ \mathrm{ch}[\sigma^{s'}(W_s)]\, |\, (s,s') \in \Lambda_p \} \]
where \begin{equation}\label{Lambdap}
 \Lambda_p:= \{(s,s')|0 < s\leq p-1, \; 0 \leq s' \leq p-1,\; s+s'+1 \in 2\mathbb{Z}\}.
\end{equation}
This set is closed under modular transformations, and we will show that the corresponding $\sS^{\chi}$-matrix agrees with the $\sS^{\hopflink}$-matrix \eqref{atypicalinfty} up to normalised conjugation. From the character formula for $W_s$ and the relation $\text{ch}[\sigma^{s'}(M)](u;\tau)=q^{\frac{s'^2}{4p}-\frac{s'^2}{2}}x^{\frac{s'}{p}-2s'}\text{ch}[M](u+\tau\frac{s'}{2};\tau)$ (see \cite[Subsections 4.3 and 3.1.2]{C}), we see that the character of $\sigma^{s'}(W_s)$ is given by
\begin{equation} \label{sigmawchar}\text{ch}[\sigma^{s'}(W_s)](u;\tau)=\frac{q^{\frac{s'^2}{4p}-\frac{s'^2}{2}}x^{\frac{s'}{p}-2s'}}{\eta(\tau)^2}\sum\limits_{n \in \mathbb{Z}}\left( \frac{q^{p(n+\frac{1}{2}-\frac{s}{2p})^2}}{1-xq^{p(n+\frac{1}{2}-\frac{s}{2p})+\frac{s'}{2}}}-\frac{q^{p(n+\frac{1}{2}+\frac{s}{2p})^2}}{1-xq^{p(n+\frac{1}{2}+\frac{s}{2p})+\frac{s'}{2}}} \right) 
\end{equation}
where $x=e^{2\pi i u}, q=e^{2 \pi i \tau}$. 
Even though $s=0, s=p$ are not allowed in \eqref{Lambdap}, it is easy to see that
with $s=0,p$ in equation \eqref{sigmawchar}, for all $s'$, we have
\begin{align}
\text{ch}[\sigma^{s'}(W_0)](u;\tau)&=\text{ch}[\sigma^{s'}(W_{-p})](u;\tau)=0,\\
\text{ch}[\sigma^{s'}(W_s)](u;\tau)&+\text{ch}[\sigma^{s'}(W_{-s})](u;\tau)=0.
\end{align}
Recall the notation from \cite{C}:
\begin{align}
\Pi(v;\tau)=q^{1/6}(z-z^{-1})\prod\limits_{n=1}^{\infty}(1-z^2q^n)(1-q^n)^2(1-z^{-2}q^n)
\end{align}
with $z=e^{2\pi iv}$.
By \cite[Theorem 3.6 and Subsection 4.3]{C}, we have the following:
\begin{align}
\text{ch}[\sigma^{s'}(W_s)](\frac{u}{\tau};\frac{-1}{\tau})&=\lim\limits_{v \to 0} \frac{\Pi(v;\tau)}{\eta(\tau)^2}\text{ch}[\sigma^{s,s'}\chi_p](\frac{u}{\tau};\frac{v}{\tau};\frac{-1}{\tau})\nonumber\\
&=\lim\limits_{v \to 0} \frac{\Pi(v;\tau)}{\eta(\tau)^2}e^{\frac{2 \pi i}{\tau}(kv^2-\frac{u^2}{p})}\sum\limits_{(n,n')\in S_p} S_{(s,s'),(n,n')}\text{ch}[\sigma^{(n,n')}(\chi_p)](u;v;\tau)\nonumber\\
&=\sum\limits_{(n,n')\in S_p}e^{\frac{-2 \pi i u^2}{p \tau}} S_{(s,s'),(n,n')}\lim\limits_{v \to 0} \frac{\Pi(v;\tau)}{\eta(\tau)^2}\text{ch}[\sigma^{(n,n')}(\chi_p)](u;v;\tau)\nonumber\\
&= \sum\limits_{(n,n')\in S_p}e^{\frac{-2 \pi i u^2}{p \tau}} S_{(s,s'),(n,n')} \text{ch}[\sigma^{n'}(W_n)](u,\tau),
\end{align}
with $S_{(s,s'),(n,n')}=\frac{-1}{p}e^{-\frac{2\pi i}{2p}(sn-s'n')}$ and $S_p:=\{(n,n')|-p\leq n \leq p-1, 0 \leq n' \leq p-1, n+n'+1 \in 2\mathbb{Z}\}$. It follows that the character of $\sigma^{s'}(W_s)$ satisfies
\begin{align}
\text{ch}[\sigma^{s'}(W_s)](\frac{u}{\tau};\frac{-1}{\tau})= \sum\limits_{(n,n')\in \Lambda_p}e^{\frac{-2 \pi i u^2}{p \tau}} \left(S_{(s,s'),(n,n')}-S_{(s,s'),(-n,n')}\right) \text{ch}[\sigma^{n'}(W_n)](u,\tau),
\end{align}
and therefore,
\begin{align}
S^{\chi}_{(s,s'),(n,n')}&=e^{\frac{-2 \pi i u^2}{p \tau}}\left(S_{(s,s'),(n,n')}-S_{(s,s'),(-n,n')}\right) \nonumber \\
&=-\frac{1}{p} e^{\frac{-2 \pi i u^2}{p \tau}}e^{\frac{2\pi i}{2p}n's'}\left(e^{-\frac{2\pi i}{p}ns}-e^{\frac{2 \pi i}{p}ns}\right)  \nonumber \\
&=-\frac{1}{p} e^{\frac{-2 \pi i u^2}{p \tau}}q^{n's'}\{ns\}. \label{atypicalchinonnormal}
\end{align}

The unit object in $\C$ is $\FSC{0}{0}{0}$ which induces to $\bigoplus\limits_{k \in \ZZ} \FSC{k\lambda}{0}{pk}$, so the identity object corresponds to $\sigma^0(W_1)$, which we will simply denote by $\mathds{1}$ in the index. Hence, the normalized $\sS^{\chi}$-matrix is given by
\begin{align}\label{atypicalschi}
\frac{\sS^{\chi}_{(s,s'),(n,n')}}{\sS^{\chi}_{\mathds{1},(n,n')}}=q^{n's'}\frac{[ns]}{[n]}.
\end{align}

The following proposition follows by comparing \eqref{atypicalschi} and \eqref{atypicalinfty}:

\begin{proposition}\label{atypicalhopf}
For atypical modules, the matrices $\mathsf{S}^{\hopflink}$ and $\mathsf{S}^{\chi}$ are in agreement up to normalized conjugation. That is,
\begin{align}
\frac{\sS^{\chi \ast}_{(s,s'),(n,n')}}{\sS^{\chi \ast}_{\mathds{1},(n,n')}}= \frac{\sS^{\hopflink}_{(s,s'),(n,n')}}{\sS^{\hopflink}_{\mathds{1},(n,n')}}
\end{align}
for $(s,s'),(n,n')\in\Lambda_p$.
\end{proposition}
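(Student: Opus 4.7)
The plan is to compare the two explicit expressions that have already been derived in the preceding discussion. The Hopf link computation, leading to equation \eqref{atypicalinfty}, gives
\[
\frac{\sS^{\hopflink}_{(s,s'),(n,n')}}{\sS^{\hopflink}_{\mathds{1},(n,n')}} = q^{-n's'}\frac{\{ns\}}{\{n\}},
\]
where the identity $\mathds{1}$ corresponds to the parameters $(s,s') = (1,0)$, since the induced object $\mathcal{A}_p$ is built from $\FSC{0}{0}{0}$ and its shifts under the simple current $\FC{\lambda_p}{p}$, which matches $\sigma^0(W_1)$ on the \voa{} side. The character computation, culminating in \eqref{atypicalschi}, gives
\[
\frac{\sS^{\chi}_{(s,s'),(n,n')}}{\sS^{\chi}_{\mathds{1},(n,n')}} = q^{n's'}\frac{[ns]}{[n]}.
\]
So nothing remains except to verify that complex conjugation of the right-hand side of the second formula reproduces the first.

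I would carry this out in two short steps. First, recall that $q = e^{\pi i/p}$ lies on the unit circle, so complex conjugation acts by $q \mapsto q^{-1}$. Consequently $\{x\} = q^{x}-q^{-x}$ gets sent to $q^{-x}-q^{x} = -\{x\}$, and the ratio $\{ns\}/\{n\} = [ns]/[n]$ is invariant under conjugation because the sign cancels. Meanwhile the monomial factor $q^{n's'}$ becomes $q^{-n's'}$. Therefore
\[
\frac{\sS^{\chi\ast}_{(s,s'),(n,n')}}{\sS^{\chi\ast}_{\mathds{1},(n,n')}} = q^{-n's'}\frac{[ns]}{[n]} = q^{-n's'}\frac{\{ns\}}{\{n\}},
\]
which agrees on the nose with \eqref{atypicalinfty}. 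Second, I would note that the parametrizations used on the two sides are consistent: on the quantum group side, $(s,s') = (i+1, -2\lambda_p\gamma - pk)$ was introduced just before \eqref{atypicalinfty} precisely to match the spectral flow parameters in \eqref{sigmass}, and the identifications of unit objects also coincide.

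There is no genuine obstacle here: the substantive work has already been done in deriving \eqref{atypicalinfty} on the categorical side and \eqref{atypicalchinonnormal}--\eqref{atypicalschi} on the character side via the modular transformation results of \cite{C}. The only subtlety worth flagging explicitly is the appearance of complex conjugation, which, as remarked in Remark \ref{remark:cc}, reflects the fact that the proposed equivalence between the unrolled quantum group side and the singlet-algebra side is braid-reversed. Thus the entire argument reduces to a one-line observation once the two normalized expressions are in hand.
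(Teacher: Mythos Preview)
Your proposal is correct and is essentially identical to the paper's own argument, which simply states that the proposition follows by comparing \eqref{atypicalinfty} and \eqref{atypicalschi}. You have made the comparison slightly more explicit by spelling out that $[ns]/[n]=\{ns\}/\{n\}$ and that complex conjugation sends $q\mapsto q^{-1}$, leaving the ratio $\{ns\}/\{n\}$ invariant while flipping the sign in the exponent of the prefactor; this is exactly the intended verification.
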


It then follows that the matrix $\sS^{\hopflink}$ is invertible since $\sS^{\chi}$ is, and by the standard argument the categorical Verlinde formula holds. We include this argument here for completeness:

Recall that the Hopf links give a one dimensional representation of the fusion ring and therefore satisfy
\[
\frac{\sS^{\hopflink}_{(s,s'),(n,n')}}{\sS^{\hopflink}_{\mathds{1},(n,n')}}\frac{\sS^{\hopflink}_{(t,t'),(n,n')}}{\sS^{\hopflink}_{\mathds{1},(n,n')}}=\sum\limits_{(m,m')} N^{(m,m')}_{(s,s'),(t,t')} \frac{\sS^{\hopflink}_{(m,m'),(n,n')}}{\sS^{\hopflink}_{\mathds{1},(n,n')}} \]
where the sum runs over the pairs corresponding to the basis of $\mathcal{G}^{\text{ss}}(\mathcal{C}^{0})$ in Proposition \ref{Grothendieck}. The matrix $\sS^{\chi}$ is invertible, so by the above corollary, $\sS^{\hopflink}$ is also invertible. Cancelling out the $\sS^{\hopflink}_{(1,0),(n,n')}$ terms and multiplying by $(\sS^{\hopflink})^{-1}_{(k,k'),(n,n')}$ for any fixed $(k,k')$ yields
\[
\frac{\sS^{\hopflink}_{(s,s'),(n,n')}\sS^{\hopflink}_{(t,t'),(n,n')}}{\sS^{\hopflink}_{\mathds{1},(n,n')}}(\sS^{\hopflink})^{-1}_{(n,n'),(k,k')}=\sum\limits_{(m,m')} N^{(m,m')}_{(s,s'),(t,t')} \sS^{\hopflink}_{(m,m'),(n,n')}(\sS^{\hopflink})^{-1}_{(n,n'),(k,k')}. \]
Summing over the index $(n,n')$ then gives
\begin{align}
\sum\limits_{(n,n')} \frac{\sS^{\hopflink}_{(s,s'),(n,n')}\sS^{\hopflink}_{(t,t'),(n,n')}(\sS^{\hopflink})^{-1}_{(n,n'),(k,k')}}{\sS^{\hopflink}_{\mathds{1},(n,n')}} &=\sum\limits_{(m,m')} N^{(m,m')}_{(s,s'),(t,t')} \left(\sS^{\hopflink}(\sS^{\hopflink})^{-1} \right)_{(m,m'),(k,k')} \nonumber\\
&=\sum\limits_{(m,m')} N^{(m,m')}_{(s,s'),(t,t')}\delta_{(m,m'),(k,k')}\nonumber\\
&=N^{(k,k')}_{(s,s'),(t,t')}
\end{align}
which is the Verlinde formula and by Proposition \ref{atypicalhopf}, the Verlinde formula also holds for atypical $\Bp$ modules:

\begin{corollary} \label{Verlindeodd} The Verlinde formula holds for the Verlinde algebra of characters generated by atypical modules of $\Bp$ when $p$ is odd. That is,
\[\sum\limits_{(n,n') \in \Lambda_p} \frac{\sS^{\hopflink}_{(s,s'),(n,n')}\sS^{\hopflink}_{(t,t'),(n,n')}(\sS^{\hopflink})^{-1}_{(n,n'),(k,k')}}{\sS^{\hopflink}_{\mathds{1},(n,n')}}=N^{(k,k')}_{(s,s'),(t,t')}\]
where $\Lambda_p$ is given in \eqref{Lambdap}.
\end{corollary}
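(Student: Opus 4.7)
The plan is to derive the stated formula by the now-standard categorical Verlinde argument, which is essentially already sketched in the paragraph preceding the corollary. The starting point is the observation that for each fixed simple object indexed by $(n,n')\in\Lambda_p$, the assignment sending a basis element $(s,s')$ of $\mathcal{G}^{\text{ss}}(\mathrm{Rep}^0\Ap)$ to the normalized Hopf link $\sS^{\hopflink}_{(s,s'),(n,n')}/\sS^{\hopflink}_{\mathds{1},(n,n')}$ is a one-dimensional representation of the Grothendieck ring (this is the content of the computation of Hopf links as scalars on simples, cf.\ Remark \ref{remark:cc}). Consequently, for all basis elements $(s,s'),(t,t')\in\Lambda_p$, we have
\[
\frac{\sS^{\hopflink}_{(s,s'),(n,n')}}{\sS^{\hopflink}_{\mathds{1},(n,n')}}\cdot\frac{\sS^{\hopflink}_{(t,t'),(n,n')}}{\sS^{\hopflink}_{\mathds{1},(n,n')}}
= \sum\limits_{(m,m')\in\Lambda_p} N^{(m,m')}_{(s,s'),(t,t')}\, \frac{\sS^{\hopflink}_{(m,m'),(n,n')}}{\sS^{\hopflink}_{\mathds{1},(n,n')}}.
\]

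Next, I would establish invertibility of the matrix $\sS^{\hopflink}$ indexed by $\Lambda_p\times\Lambda_p$. For this I can invoke Proposition \ref{atypicalhopf}: the normalized matrices $\sS^{\hopflink}$ and $\sS^{\chi\ast}$ agree, so since $\sS^{\chi}$ is invertible as a modular $S$-matrix acting on the Verlinde algebra of atypical characters (this follows from the explicit formula \eqref{atypicalchinonnormal} and the usual $S^2$-symmetry on the spectral-flow orbit of $W_s$'s parametrized by $\Lambda_p$, a standard check), the matrix $\sS^{\hopflink}$ is invertible too.

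With invertibility in hand, the remainder is bookkeeping. I would clear the factor $\sS^{\hopflink}_{\mathds{1},(n,n')}$ once on each side, multiply both sides by $(\sS^{\hopflink})^{-1}_{(n,n'),(k,k')}$ for a fixed $(k,k')\in\Lambda_p$, and sum over $(n,n')\in\Lambda_p$. On the right the sum collapses via $\sum_{(n,n')} \sS^{\hopflink}_{(m,m'),(n,n')}(\sS^{\hopflink})^{-1}_{(n,n'),(k,k')}=\delta_{(m,m'),(k,k')}$, isolating the single fusion coefficient $N^{(k,k')}_{(s,s'),(t,t')}$, which is exactly the claim.

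The only genuine obstacle is verifying invertibility of $\sS^{\hopflink}$ (equivalently of $\sS^{\chi}$) on the precise index set $\Lambda_p$, since the raw formulas \eqref{atypicalchinonnormal} and \eqref{atypicaluqhopf} involve both the ``congruence parity'' constraint $s+s'+1\in 2\ZZ$ and the identifications coming from $\module{\gamma}{p-1-i}{p\ell}=-\module{\gamma}{i-1}{p(\ell+1)}$; one must check that after passing to the $\Lambda_p$-basis these identifications are compatible with the symmetry $\text{ch}[\sigma^{s'}(W_s)]+\text{ch}[\sigma^{s'}(W_{-s})]=0$ used to restrict $S_p$ down to $\Lambda_p$, and that the resulting square matrix is non-degenerate. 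Once this is done, the corollary is an immediate formal consequence of Proposition \ref{atypicalhopf} and the one-dimensional representation property of normalized Hopf links.
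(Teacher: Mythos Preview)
Your proposal is correct and follows essentially the same route as the paper: use that normalized Hopf links give one-dimensional representations of the Grothendieck ring, invoke Proposition~\ref{atypicalhopf} to transfer invertibility of $\sS^{\chi}$ to $\sS^{\hopflink}$, then clear denominators, multiply by $(\sS^{\hopflink})^{-1}_{(n,n'),(k,k')}$ and sum over $(n,n')$. The paper simply asserts invertibility of $\sS^{\chi}$ rather than checking it explicitly on $\Lambda_p$, so your flagging of that point is more careful than, but not different from, the paper's argument.
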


\subsection{Comparison for even $p$}

When $p$ is even, $\Bp$ has half integer $L(0)$-grading and $\mathbb{Z}_2$-grading given by
\[ \Bp=\Bp^{\overline{0}} \oplus \Bp^{\overline{1}}, \]
where $\Bp^{\overline{0}}$ is the integer part of the $\frac{1}{2}\mathbb{Z}$-grading and $\Bp^{\overline{1}}$ is the non-integer part. In this case, we instead compare the ring $\mathcal{G}^{ss}(\mathcal{C}^{0}_{\overline{0}})$ for the modules which lift to $\Bp^{\overline{0}}$ modules with the Verlinde algebra of characters of $\Bp^{\overline{0}}$. Note that when we computed $\mathcal{G}^{ss}(\mathcal{C}^{0})$ we used the relation $\module{\gamma}{i}{\ell p}=\module{\gamma+\lambda}{i}{(\ell+1)p}$ which no longer holds. To remedy this, we need only notice that the objects $\module{\gamma}{i}{\ell p}$ and $\module{\gamma+2\lambda}{i}{(\ell+2)p}$ induce to the same $\Bp^{\overline{0}}$ module and are therefore equal in $\mathcal{G}^{ss}(\mathcal{C}_{\overline{0}}^{0})$. Hence, it is enough to add to the basis for $\mathcal{G}^{ss}(\mathcal{C}^{0})$ with $p$ even obtained in Proposition \ref{Grothendieck} the shifted elements $\module{\gamma+\lambda}{i}{(\ell+1)p}$ for each basis element $\module{\gamma}{i}{\ell p}$. Hence, a basis for $\mathcal{G}^{ss}(\mathcal{C}_{\overline{0}}^{0})$ is given by
\begin{align*}
\module{\frac{2n}{2\lambda}}{2m}{0}, \module{\frac{2n-p}{2\lambda}}{2m}{p}& \qquad n \in \{0,1,\dots,\frac{p-2}{2}\}, m\in\{0,1,\dots,\frac{p-2}{2}\},\\
\module{\frac{2n+1}{2\lambda}}{2m+1}{0}, \module{\frac{2n+1-p}{2\lambda}}{2m+1}{p}& \qquad n \in \{0,1,\dots,\frac{p-2}{2}\}, m \in \{0,1,\dots,\frac{p-4}{2}\}.
\end{align*}
This is not a $\mathbb{Z}_+$ basis with respect to the product 
\begin{equation}\label{Grothendieckproduct}\module{\frac{x}{2\lambda}}{i}{p\ell} \mdot \module{\frac{y}{2\lambda}}{j}{p\ell '}\left\{=\begin{array}{cc}
\sum\limits_{\substack{l=|i-j| \\ \text{by } 2}}^{i+j} \module{\frac{x+y \text{ mod 2p}}{2\lambda}}{l}{p(\ell+\ell')\text{ mod 2p}}  & \text{ if $i+j<p$},\\
&\\
\sum\limits_{\substack{l=|i-j| \\ \text{by } 2}}^{2p-4-i-j}\module{\frac{x+y \text{ mod 2p}}{2\lambda}}{l}{p(\ell+\ell')\text{ mod 2p}}  & \text{if $i+j \geq p$}. \\
\end{array}\right.\\
\end{equation}
Applying the relation $\module{\gamma}{j-1}{p(\ell+1)}=-\module{\gamma}{p-1-j}{p\ell}$, we obtain a basis
\begin{align*}
\module{\frac{2n}{2\lambda}}{2m}{0}, \module{\frac{2n-p}{2\lambda}}{p-2-2m}{0}& \qquad n \in \{0,1,\dots,\frac{p-2}{2}\}, m\in\{0,1,\dots,\frac{p-2}{2}\},\\
\module{\frac{2n+1}{2\lambda}}{2m+1}{0}, \module{\frac{2n+1-p}{2\lambda}}{p-2-(2m+1)}{0}& \qquad n \in \{0,1,\dots,\frac{p-2}{2}\}, m \in \{0,1,\dots,\frac{p-4}{2}\},
\end{align*}
which is easily seen to be a $\mathbb{Z}_+$-basis. We have already seen that the Hopf links satisfy equation \eqref{atypicaluqhopf}. Restricting this equation to $p$ even ($k=\ell=0$) gives 
\begin{equation}
(\sS^{\hopflink}_{\Bp^{\overline{0}}})_{(s,s'),(n,n')}=(-1)^{s+n}q^{-n's'}\{ns\},
\end{equation}
once again making the substitution $(s,s')=(i+1,-2\lambda_p\gamma_1-pk), (n,n')=(j+1,-2\lambda_p\gamma_2-p\ell)$ and the indices run over the set $\Lambda_p \cup \tilde \Lambda_p$ where $\tilde \Lambda_p:=\{ (s,s'-p)| (s,s') \in \Lambda_p\}$. We therefore have four cases:
{\renewcommand{\arraystretch}{1.5}
\begin{center}
\begin{tabular}{ |c|c| }
\hline
$\left( (s,s'),(n,n') \right)$ & $(\sS^{\hopflink}_{\Bp^{\overline{0}}})_{(s,s'),(n,n')}$\\
\hline 
 $\Lambda_p \times \Lambda_p$ & $(-1)^{s+n}q^{-n's'}\{ns\}$ \\ \hline
 $\Lambda_p \times \tilde{ \Lambda}_p$ & $(-1)^{n+1}q^{-n's'}\{ns\}$ \\ \hline
 $\tilde{\Lambda}_p \times \Lambda_p$ & $(-1)^{s+1}q^{-n's'} \{ns\}$ \\ \hline
$\tilde{\Lambda}_p \times \tilde{ \Lambda}_p$ & $q^{-n's'}\{ns\}$ \\ \hline
\end{tabular}
\end{center}}

Following the analysis in Subsection \ref{oddp} and replacing characters with supercharacters where necessary, we see that the $\sS^{\chi}$ matrix for the atypical $\Bp$ modules $\{ \sigma^{s'}(W_s)| (s,s' ) \in \Lambda_p\}$ are still given by equation \eqref{atypicalchinonnormal}. However, each $\Bp$ module $\sigma^{s'}(W_s)$ splits into two $\Bp^{\overline{0}}$ modules $\sigma^{s'}(W_s)^{\overline{0}}$ and $\sigma^{s'}(W_s)^{\overline{1}}$, where we recall that $\sigma^{s'}(W_s)$ is given by equation \eqref{sigmass} and $\sigma^{s'}(W_s)^{\overline{0}},\sigma^{s'}(W_s)^{\overline{1}}$ are given by the even and odd summands respectively. It then follows from equation \eqref{atypicalchinonnormal} and \cite[Equation 4.9]{C} that these matrices agree up to normalised conjugation.

\section{The characters of $\Bp$ and of a QH-reduction} \label{SectionQH}

In this section, we set $n = p-1$ and compare the character of $\Bp$ with a certain Quantum-Hamiltonian reduction of $\sln{n}$ as announced in \cite[Remark 5.6]{C}. For more details about Quantum-Hamiltonian reduction, see \cite{Ara} for instance. 

Recall that $\sln{n-1}$ can be embedded in $\sln{n}$ such that
\[ \sln{n} \cong \sln{n-1} \oplus \rho_{n-1} \oplus \overline{\rho_{n-1}} \oplus \mathbb{C} \]
where $\rho_{n-1}$ is the standard representation of $\sln{n-1}$ and $\overline{\rho_{n-1}}$ its conjugate. To carry computations, we see $\sln{n-1}$ as embedded in the upper-left square of the matrix realisation of $\sln{n}$. Fix the $\sln{2}$-triplet $\{F,H,E\}$ in $\sln{n-1} $ as follows
\begin{align} \label{eq:sl2tripleQHcharBp}
F:&=\sum\limits_{i=1}^{n-2}e_{i+1,i}, & H:&=\frac{1}{2}\sum\limits_{i=1}^{n-1}(n-2i)e_{ii}, & E:&=\sum\limits_{i=1}^{n-2}e_{i,i+1} 
\end{align}

Notice that for any matrix $h=\sum\limits_{j=1}^n \lambda_j e_{jj} \in \Hlie$, we have
\[ [F,h]=\sum\limits_{i=1}^{n-2}(\lambda_i- \lambda_{i+1})e_{i+1,i}, \]
and the matrix must be of trace zero, so $0=(n-1)\lambda_1 + \lambda_n$. It follows that the subalgebra of $\Hlie$ annihilated by $F$ under the adjoint action is $\text{Span}_{\mathbb{C}} \{A\}$ where $A$ is the diagonal matrix 
$A:=\mathrm{diag}\{\frac{1}{n},...,\frac{1}{n},\frac{1-n}{n}\}$. 
Let $K=Av$ for complex $v$ and set $x=e^v$. Let $\Delta_+$ denote the set of positive roots of the full $\sln{n}$, $\Delta_+^0 \subset \Delta_+$ the set of positive roots such that $\alpha(H)=0$, and $\Delta_+^{\frac{1}{2}} \subset \Delta_+$ the positive roots  for which $\beta(H)=1/2$. By \cite[Equation (11)]{KW}, the character of the Quantum-Hamiltonian reduction associated to \eqref{eq:sl2tripleQHcharBp} is given by the following formula:
\begin{equation}\label{eq:KW} (-i)^{\frac{p(p+1)}{2}}q^{\frac{(p^2-1)(p-2)}{24}}\frac{\eta(p \tau)^{-\frac{1}{2}p^2+\frac{5}{2}p-3}}
	{\eta(\tau)^{p-3}} \frac{ \prod\limits_{\alpha \in \Delta_+} \vartheta_{11}(p \tau, \alpha(K- \tau H) )}{\left( \prod\limits_{\alpha \in \Delta_+^0} \vartheta_{11}(\tau,\alpha(K)) \right)\left( \prod\limits_{\beta \in \Delta^{\frac{1}{2}}} \vartheta_{01}(\tau,\beta(K)) \right)^{1/2} } 
\end{equation}

where $\vartheta_{11}$ and $\vartheta_{01}$ are the standard Jacobi theta functions:
\begin{align}
\vartheta_{11}(\tau,z)&=-iq^{1/12}u^{-1/2}\eta(\tau) \prod\limits_{k=1}^{\infty}(1-u^{-1}q^k)(1-uq^{k-1}) \; ,\\
\vartheta_{01}(\tau,z)&=\prod\limits_{k=1}^{\infty}(1-u^{-1}q^{k-1/2})(1-q^k)(1-uq^{k-1/2})
\end{align}
where $u=e^{2\pi i z}$ and $q=e^{2 \pi i \tau}$. 

\begin{theorem}\label{BpQH} Let $p \in \ZZ$. Then the character of $\Bp$ is given by equation~\eqref{eq:KW}.
\end{theorem}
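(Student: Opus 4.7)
The plan is to compute both sides of the identity in explicit theta–eta form and identify them via classical theta-function product-to-sum manipulations.

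On the vertex-algebra side, start from the decomposition $\Bp \cong \bigoplus_{k \in \ZZ} \FockC{kp} \boxtimes M_{1-k,1}$ recorded in the remark following Proposition \ref{Apalgebra}. Substituting the standard Fock character $\mathrm{ch}[\FockC{kp}] = x^{kp} q^{(kp)^2/2}/\eta(\tau)$ together with the known Feigin–Fuchs/Flohr character of the atypical singlet modules $M_{r,1}$ and summing over $k \in \ZZ$ using the Jacobi triple product, one obtains a closed form for $\mathrm{ch}[\Bp]$ as a single theta-like series divided by an explicit power of $\eta(\tau)$.

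On the Kac–Wakimoto side, a direct computation of the root data yields, for positive roots $\alpha_{ij} = \epsilon_i - \epsilon_j$ of $\sln{n}$ with $n = p-1$, the values $\alpha_{ij}(H) = j-i$ and $\alpha_{ij}(A) = 0$ for $1 \le i < j \le n-1$, and $\alpha_{in}(H) = (n-2i)/2$ and $\alpha_{in}(A) = 1$ for $1 \le i \le n-1$. Consequently $\Delta_+^0$ consists of the single root $\alpha_{n/2,n}$ when $n$ is even and is empty otherwise, while $\Delta_+^{1/2}$ consists of $\alpha_{(n-1)/2,n}$ when $n$ is odd and is empty otherwise. Plugging into \eqref{eq:KW} splits the numerator into an $x$-independent product $\prod_{1 \le i < j \le n-1} \vartheta_{11}(p\tau, -\tau(j-i))$ and an $x$-dependent product $\prod_{i=1}^{n-1} \vartheta_{11}(p\tau, v - \tau(n-2i)/2)$, divided by a single parity-dependent specialised theta factor in $v$.

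To match the two sides, expand each theta factor via the Jacobi triple product. The $x$-independent product combines with the ratio of $\eta(p\tau)$ and $\eta(\tau)$ powers and the constant prefactor $(-i)^{p(p+1)/2} q^{(p^2-1)(p-2)/24}$ to reproduce exactly the $\eta$-denominators and the central-charge normalisation coming from the singlet–Heisenberg decomposition of Step~1. The $x$-dependent product, after triple-product expansion and a careful change of summation variable, collapses into the $\ZZ$-indexed theta sum obtained in Step~1. The two parities of $p$ must be handled in tandem: the lone $\vartheta_{01}^{1/2}$ denominator factor (present only when $p$ is even) precisely accounts for the half-integer $L(0)$-grading of $\Bp = \Bp^{\bar 0} \oplus \Bp^{\bar 1}$, while the $\vartheta_{11}(\tau,v)$ denominator factor (present only when $p$ is odd) plays a parallel role in the integer-graded case.

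The main technical obstacle is the combinatorial identity recasting a product of $p-2$ $\vartheta_{11}$ factors, each shifted by a distinct $(n-2i)/2$ multiple of $\tau$, as the single theta-series indexed by $\ZZ$ that arises from the simple-current extension. This requires iterated Jacobi triple product expansions together with careful bookkeeping of signs, $\eta$-powers, and the parity-dependent denominator factor. Once this identification is carried out, the matching of overall prefactors is then a routine check.
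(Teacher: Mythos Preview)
Your overall architecture (compute both sides, then match) is sound, and your root-data analysis agrees with the paper once one accounts for the indexing convention $\alpha_{ij}=\epsilon_i-\epsilon_j$ versus the paper's $\alpha_{i,j}=\sum_{\ell=i}^{j}\alpha_\ell$. However, your Step~1 has a genuine gap: the singlet characters $\mathrm{ch}[M_{r,1}]$ are expressed via \emph{partial} (false) theta functions, not honest Jacobi theta functions, so ``summing over $k\in\ZZ$ using the Jacobi triple product'' does not by itself produce a closed theta form for $\mathrm{ch}[\Bp]$. Getting from the simple-current decomposition to a usable closed expression requires additional input---in \cite{C} this is done by realizing $\Bp=W_1$ and passing through the auxiliary character $\chi_p$ of the corresponding $W$-algebra coset, which already yields an explicit \emph{infinite product} for $\mathrm{ch}[\Bp]$. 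The paper then simplifies the Kac--Wakimoto side \eqref{eq:KW} factor by factor (up to the equivalence $X\sim \gamma q^a x^b Y$) into the very same infinite product, and the comparison is immediate. No ``product of $p-2$ thetas $\to$ single theta series'' identity is needed.

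So the concrete issues are: (i) replace your Step~1 by invoking (or rederiving) the product formula for $\mathrm{ch}[\Bp]=\mathrm{ch}[W_1]$ from \cite[\S4.1, Prop.~5.2]{C}; (ii) on the KW side, rather than expanding each $\vartheta$ into a sum and attempting a combinatorial collapse, group the $x$-independent factors $\prod_{1\le i\le j<n-1}\vartheta_{11}(p\tau,\alpha_{i,j}(K-\tau H))$ and the $x$-dependent factors $\prod_{1\le i\le n-1}\vartheta_{11}(p\tau,\alpha_{i,n-1}(K-\tau H))$ separately, rewrite each as an infinite product in $q$ and $x$, and absorb the resulting $\eta$-powers into the prefactor. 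The parity-dependent denominator then cancels in the obvious way against the boundary terms of the $x$-dependent product, and both sides reduce to
\[
\frac{\eta(p\tau)^2}{\eta(\tau)^2}\prod_{k\ge 0}\frac{(1-q^{pk+1})(1-q^{p(k+1)-1})}{(1-xq^{p(k+\frac12)\pm\frac12})(1-x^{-1}q^{p(k+\frac12)\pm\frac12})},
\]
which is the endpoint of the paper's argument.
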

\begin{proof} As for proving~\cite[Theorem 5.5]{C}, a character is viewed as a formal power series where we admit $\frac{1}{1-x} = \sum\limits_{r=0}^\infty x^r$. Two formal power series $X, Y$ will be seen as equivalent if $X = \gamma q^a x^b Y$ for $\gamma \in \CC$ and $a,b \in \QQ$. Also here, it is enough to show that $\ch{\Bp}{x;q}$ is equivalent to \eqref{eq:KW} since both are formal power series of the form $q^{-\frac{c_p}{24}}(1 + \cdots)$.\\

Let's then make the formula \eqref{eq:KW} explicit, keeping in mind that $n = p-1$ here. First let's fix a standard choice of positive roots for $\sln{n} = \sln{p-1}$ and compute $\Delta_+^0$ and $\Delta^{\frac{1}{2}}$:
\begin{itemize}
\item $\Delta_+ = \Delta_+(\sln{n}) = \left\{\alpha_{i,j} =\sum\limits_{\ell=i}^j \alpha_\ell \; | \; 1\leq i \leq j \leq n-1\right\}$ where $\alpha_\ell(e_{ii}) = \delta_{\ell,i} - \delta_{\ell+1,i}$;
\item $\alpha_{i,j}(K)=0$ if $j<n-1$;
\item $\alpha_{i,n-1}(K)=v$;
\item $\alpha_{i,j}(H)=j-i+1$ if $j<n-1$;
\item $\alpha_{i,n-1}(H)=\frac{n}{2}-i$;
\item $\Delta_+^0 = \{\alpha_{\frac{n}{2},n-1}\}$ and $\Delta^{\frac{1}{2}} = \emptyset$ if $n$ is even ($p$ odd);
\item $\Delta_+^0 = \emptyset$ and $\Delta^{\frac{1}{2}} = \{\alpha_{\frac{n-1}{2},n-1},-\alpha_{\frac{n+1}{2},n-1}\}$ if $n$ is odd ($p$ even).
\end{itemize}

We shall now examine each factor in \eqref{eq:KW}. For any choice of $n$, since $\alpha_{i,j}(K-\tau H)=\tau(i-j-1)$ if $i<j$ and $\alpha_{i,n-1}(K-\tau H)=v-\tau(\frac{n}{2}-i)$ one obtains:
\begin{align*}
\frac{\vartheta_{11}(p \tau, \alpha_{i,j}(K-\tau H))}{\eta(p \tau)} &\sim \prod\limits_{k=1}^{\infty} (1-q^{p(k-1)+(j-i+1)})(1-q^{pk-(j-i+1)}) \qquad \qquad (j<n-1)\\
\frac{\vartheta_{11}(p \tau, \alpha_{i,n-1}(K-\tau H))}{\eta(p \tau)} &\sim \prod\limits_{k=1}^{\infty} (1-x^{-1}q^{p(k-1)+(\frac{n}{2}-i)})(1-xq^{pk-(\frac{n}{2}-i)}),
\end{align*}
where we used the observation $\prod\limits_{k=1}^\infty (1-u^{-1}q^k)(1-uq^{k-1}) \sim \prod\limits_{k=1}^\infty (1-uq^k)(1-u^{-1}q^{k-1})$ on each line. Then, aiming to find the product over $\Delta_+$ as in the numerator of \eqref{eq:KW}, we write:
\begin{align}
&\prod_{k=1}^\infty \prod_{1\leq i \leq j < n-1} \frac{\vartheta_{11}(p \tau, \alpha_{i,j}(K-\tau H))}{\eta(p \tau)} 
\sim \prod\limits_{k=1}^{\infty} \prod_{1\leq i \leq j < n-1}  (1-q^{pk+(j-i+1-p)})(1-q^{pk-(j-i+1)}) \nonumber\\
&\sim \prod\limits_{k=1}^{\infty} \prod_{1\leq a \leq n-2}  (1-q^{pk-(p-a)})^{p-2-a}(1-q^{pk-a})^{p-2-a} \nonumber\\
&\sim \prod\limits_{k=1}^{\infty} (1-q^{pk-1})(1-q^{p(k-1)+1}) \prod_{1\leq b \leq n} (1-q^{pk-b})^{p-4} \nonumber\\ 
&\sim \prod\limits_{k=1}^{\infty} (1-q^{pk-1})(1-q^{p(k-1)+1}) \prod_{1\leq b \leq n} (1-q^{pk-b})^{p-4} \nonumber\\
&\sim \prod\limits_{k=1}^{\infty} (1-q^{pk-1})(1-q^{p(k-1)+1}) \left(\frac{1-q^k}{1-q^{pk}}\right)^{p-4}\nonumber\\
&\sim \left(\frac{\eta(\tau)}{\eta(p\tau)}\right)^{p-4} \cdot \prod\limits_{k=1}^{\infty} (1-q^{pk-1})(1-q^{p(k-1)+1}).
\end{align}
We also will need the following for the numerator:
\begin{align}
\prod\limits_{1\leq i \leq n-1}&\frac{\vartheta_{11}(p \tau, \alpha_{i,n-1}(K-\tau H))}{\eta(p \tau)} \sim \prod\limits_{k=1}^{\infty} \prod\limits_{1\leq i \leq n-1} (1-x^{-1}q^{p(k-1)+(\frac{n}{2}-i)})(1-xq^{pk-(\frac{n}{2}-i)})\nonumber\\
&=\prod\limits_{k=0}^{\infty} \frac{(1-xq^{k+\frac{p}{2}+\frac{3}{2}})(1-x^{-1}q^{k-\frac{p}{2}+\frac{3}{2}})}{(1-xq^{pk+\frac{3}{2}p-\frac{1}{2}})(1-xq^{pk+\frac{3}{2}p+\frac{1}{2}})(1-x^{-1}q^{p(k+\frac{1}{2})-\frac{1}{2}})(1-x^{-1}q^{p(k+\frac{1}{2})+\frac{1}{2}})}.
\end{align}
Finally completing the product of $\vartheta_{11}$s over $\Delta_+$ in the numerator of \eqref{eq:KW} yields: 
 \begin{align}
 \prod\limits_{\alpha \in \Delta_+} &\vartheta_{11}(p \tau, \alpha(K - \tau H) ) \sim \eta(p\tau)^{\frac{(n-2)(n-1)}{2}+(n-1)-(n-3)}\eta(\tau)^{n-3} \cdot \prod\limits_{k=1}^{\infty} (1-q^{pk-1})(1-q^{p(k-1)+1}) \nonumber\\
 &\times\prod\limits_{k=0}^{\infty} \frac{(1-xq^{k+\frac{p}{2}+\frac{3}{2}})(1-x^{-1}q^{k-\frac{p}{2}+\frac{3}{2}})}{(1-xq^{pk+\frac{3}{2}p-\frac{1}{2}})(1-xq^{pk+\frac{3}{2}p+\frac{1}{2}})(1-x^{-1}q^{p(k+\frac{1}{2})-\frac{1}{2}})(1-x^{-1}q^{p(k+\frac{1}{2})+\frac{1}{2}})}.
 \end{align}
The $\eta$ factors written in terms of $p$ instead of $n$ read:
\begin{align}
\eta(p\tau)^{\frac{(p-3)(p-2)}{2}-(p-4)+p-2}\eta(\tau)^{p-4}
=\eta(p\tau)^{\frac{p^2}{2}-\frac{5p}{2}+5}\eta(\tau)^{p-4}.
\end{align}
The denominator of \eqref{eq:KW} is given up to equivalence by
\begin{align}
&\eta(\tau)\prod\limits_{k=0}^{\infty}(1-x^{-1}q^{k+1})(1-x^nq^k) & &p\mathrm{\; odd,}\\
&\eta(\tau)\prod\limits_{k=0}^{\infty}(1-x^{-1}q^{k+\frac{1}{2}})(1-xq^{k+\frac{1}{2}}) & &p \mathrm{\; even.}
\end{align}
In effect, collecting various $\eta$ factors arising from $\vartheta$s now gets us:
$\eta(p\tau)^{\frac{p^2}{2}-\frac{5p}{2}+5}\eta(\tau)^{p-5}$.
Canceling these with the $\eta$ factors already present in the \eqref{eq:KW},
we are left with simply $\dfrac{\eta(p\tau)^2}{\eta(\tau)^2}$.

We now show the rest of the calculation for $p$ be odd, the other case is similar.
In this case, the character, up to $\sim$ equivalence has now simplified to:
\begin{align}
&\dfrac{\eta(p\tau)^2}{\eta(\tau)^2}\prod\limits_{k=0}^{\infty} \frac{(1-q^{p(k+1)-1})(1-q^{pk+1})(1-xq^{k+\frac{p}{2}+\frac{3}{2}})(1-x^{-1}q^{k-\frac{p}{2}+\frac{3}{2}})}{(1-xq^{pk+\frac{3}{2}p-\frac{1}{2}})(1-xq^{pk+\frac{3}{2}p+\frac{1}{2}})(1-x^{-1}q^{p(k+\frac{1}{2})-\frac{1}{2}})(1-x^{-1}q^{p(k+\frac{1}{2})+\frac{1}{2}})(1-x^{-1}q^{k+1})(1-xq^k)}
\nonumber\\
&=\dfrac{\eta(p\tau)^2}{\eta(\tau)^2}\prod\limits_{k=0}^{\infty} \frac{(1-q^{p(k+1)-1})(1-q^{pk+1})}
{(1-xq^{pk+\frac{3}{2}p-\frac{1}{2}})(1-xq^{pk+\frac{3}{2}p+\frac{1}{2}})(1-x^{-1}q^{p(k+\frac{1}{2})-\frac{1}{2}})(1-x^{-1}q^{p(k+\frac{1}{2})+\frac{1}{2}})}\nonumber\\
&\quad\quad\quad\times
\frac{\prod\limits_{k=-\frac{p}{2}+\frac{3}{2}}^{0}(1-x^{-1}q^k)}{\prod\limits_{k=0}^{\frac{p}{2}+\frac{1}{2}}{(1-xq^k)}}
\nonumber\\
&\sim\dfrac{\eta(p\tau)^2}{\eta(\tau)^2}\left(\prod\limits_{k=0}^{\infty} \frac{(1-q^{p(k+1)-1})(1-q^{pk+1})}
{(1-xq^{pk+\frac{3}{2}p-\frac{1}{2}})(1-xq^{pk+\frac{3}{2}p+\frac{1}{2}})(1-x^{-1}q^{p(k+\frac{1}{2})-\frac{1}{2}})(1-x^{-1}q^{p(k+\frac{1}{2})+\frac{1}{2}})}\right)\nonumber\\
&\quad\quad\times\frac{1}{(1-xq^{\frac{p}{2} -\frac{1}{2}})(1-xq^{\frac{p}{2} +\frac{1}{2}})}
\nonumber\\
&=\dfrac{\eta(p\tau)^2}{\eta(\tau)^2}\prod\limits_{k=0}^{\infty} \frac{(1-q^{p(k+1)-1})(1-q^{pk+1})}
{(1-xq^{pk+\frac{p}{2}-\frac{1}{2}})(1-xq^{pk+\frac{p}{2}+\frac{1}{2}})(1-x^{-1}q^{p(k+\frac{1}{2})-\frac{1}{2}})(1-x^{-1}q^{p(k+\frac{1}{2})+\frac{1}{2}})}\label{eq:finalKW}.
\end{align}
The character of $\Bp$ can be written in a product form using $\Bp=W_1$ and the \cite[Subsection 4.1, Proposition 5.2]{C} as follows:
\begin{align}
&\mathrm{ch}[\Bp](x;\tau)=\text{ch}[W_1](x;\tau)=
\lim_{z\rightarrow 1}\frac{\Pi(zq^{\frac{1}{2}};\tau)}{\eta(\tau)^2}q^{\frac{1}{4p}}\mathrm{ch}[\chi_p](x;zq^{\frac{1}{2}};\tau)\nonumber\\
&=\frac{1}{\eta(\tau)^2}q^{\frac{1}{4p}}q^{p/4-1/6}
\prod\limits_{k=0}^{\infty} \frac{(1-q^{pk+1})(1-q^{p(k+1)})^2(1-q^{p(k+1)-1})}
{(1-xq^{p(k+\frac{1}{2})+\frac{1}{2}})(1-xq^{p(k+\frac{1}{2})-\frac{1}{2}})(1-x^{-1}q^{p(k+\frac{1}{2})+\frac{1}{2}})(1-x^{-1}q^{p(k+\frac{1}{2})-\frac{1}{2}})}\nonumber\\
&\sim
\frac{\eta(p\tau)^2}{\eta(\tau)^2}
\prod\limits_{k=0}^{\infty} \frac{(1-q^{pk+1})(1-q^{p(k+1)-1})}
{(1-xq^{p(k+\frac{1}{2})+\frac{1}{2}})(1-xq^{p(k+\frac{1}{2})-\frac{1}{2}})(1-x^{-1}q^{p(k+\frac{1}{2})+\frac{1}{2}})(1-x^{-1}q^{p(k+\frac{1}{2})-\frac{1}{2}})}.
\end{align}
This matches with \eqref{eq:finalKW}, completing the proof.\\
\end{proof}

Theorem \ref{BpQH} shows that $\Bp$ is a quantum hamiltonian reduction up to character, as suggested in \cite[Remark 5.6]{C}.

\end{document}